\theoremstyle{plain}
    \newtheorem{thm}{Theorem}[section]
     \newtheorem{conjecture}[thm]{Conjecture}
    \newtheorem{corollary}[thm]{Corollary}
    \newtheorem{example}[thm]{Example}
    \newtheorem{lemma}[thm]{Lemma}
    \newtheorem{proposition}[thm]{Proposition}
    \newtheorem{question}[thm]{Question}
    \newtheorem{theorem}[thm]{Theorem}
\theoremstyle{definition}
    \newtheorem{definition}[thm]{Definition}
\theoremstyle{remark}
\newcommand{\C}{\mathbb{C}}
\newcommand{\Q}{\mathbb{Q}}
\newcommand{\R}{\mathbb{R}}
\newcommand{\Z}{\mathbb{Z}}
\newcommand{\SEnd}{\operatorname{SEnd}}
\newcommand{\Amp}{\operatorname{Amp}}
\newcommand{\Aut}{\operatorname{Aut}}
\newcommand{\diag}{\operatorname{diag}}
\newcommand{\Gal}{\operatorname{Gal}}
\newcommand{\IEnd}{\operatorname{IEnd}}
\newcommand{\id}{\operatorname{id}}
\newcommand{\IAmp}{\operatorname{IAmp}}
\newcommand{\Ker}{\operatorname{Ker}}
\newcommand{\NE}{\overline{\operatorname{NE}}}
\newcommand{\Nef}{\operatorname{Nef}}
\newcommand{\NS}{\operatorname{NS}}
\newcommand{\PE}{\operatorname{PE}}
\newcommand{\PEnd}{\operatorname{PEnd}}
\newcommand{\Pol}{\operatorname{Pol}}
\newcommand{\N}{\operatorname{N}}
\newcommand{\Pic}{\operatorname{Pic}}
\newcommand{\ratmap}
{{\,\cdot\negmedspace\cdot\negmedspace\cdot\negmedspace\to\,}}
\newcommand{\alg}{\mathrm{alg}}
\newcommand{\reg}{\mathrm{reg}}
\begin{document}

\title[Polarized or int-amplified Endomorphisms]
{Normal projective varieties admitting polarized or int-amplified endomorphisms
}

\author{Sheng Meng and De-Qi Zhang}

\address
{
\textsc{Department of Mathematics} \endgraf
\textsc{National University of Singapore,
Singapore 119076, Republic of Singapore
}}
\email{math1103@outlook.com}
\email{ms@u.nus.edu}
\address
{
\textsc{Department of Mathematics} \endgraf
\textsc{National University of Singapore,
Singapore 119076, Republic of Singapore
}}
\email{matzdq@nus.edu.sg}

\begin{abstract}
Let $X$ be a normal projective variety admitting a polarized or int-amplified endomorphism $f$.
We list up characteristic properties of such an endomorphism and classify
such a variety from the aspects of its singularity,
anti-canonical divisor and Kodaira dimension.
Then we run the equivariant minimal model program with respect to not just the single $f$ but also 
the monoid $\SEnd(X)$
of all surjective endomorphisms of $X$,
up to finite-index.
Several applications are given.
We also give both algebraic and geometric characterizations of toric varieties via polarized endomorphisms.
\end{abstract}

\subjclass[2010]{
14E30,   
32H50, 
08A35,  
14M25.  
}

\keywords{polarized endomorphism, amplified endomorphism, iteration, equivariant MMP, $Q$-abelian variety, toric variety}

\maketitle

\tableofcontents

\section{Introduction}
In this note, we report our recent results in studying surjective endomorphisms, especially polarized endomorphisms and int-amplified endomorphisms of higher dimensional algebraic varieties in arbitrary characteristic. The main focus is in setting up the equivariant minimal model program (MMP) for such endomorphisms. We will outline the ideas but refer to the original papers for the detailed proofs.
Our approach is more geometric.

For simplicity of the presentation, 
{\it we mainly work over an algebraically closed field $k$ of characteristic $0$}, 
except the last section where the cases of positive characteristic are discussed.

In Section \ref{sec-2}, we introduce various notions, like polarized endomorphisms, (int-) amplified endomorphisms, etc., and state the important properties enjoyed by these endomorphisms.

In Section \ref{sec-3}, we characterize normal projective varieties $X$ admitting amplified endomorphisms from the aspects of their singularities, canonical divisors, and Kodaira dimensions.
These are rough descriptions which will be used later for the theory of the equivariant MMP on $X$. 

In Section \ref{sec-4}, we show the equivariance of the MMP for a single polarized or int-amplified endomorphism.

In Section \ref{sec-5}, we show the finiteness of the number of contractible extremal rays for a projective variety $X$ admitting a polarized or int-amplified endomorphism.
This way, we confirm the equivariance of the MMP on $X$ for
a finite-index submonoid of the
monoid $\SEnd(X)$ of all surjective endomorphisms of $X$.

In Section \ref{sec-6}, we give the characterizations of toric varieties in terms of the existence of  almost homogeneity or polarized endomorphisms equipped with an extra ramification condition.

In Section \ref{sec-7}, we list several results which have appeared in the previous sections and also hold true in the case of positive characteristic. Of course, we need to assume some extra natural conditions, like separability of the map, etc.

We refer the readers to the survey paper \cite{Zhsw} which has more number-theoretic flavours and includes many outstanding conjectures.

\par \vskip 1pc
{\bf Acknowledgement.}
The second named author would like to thank the organizing committee for the kind invitation and
warm hospitality during the
International conference : Nevanlinna theory and Complex Geometry in Honor of Le Van Thiem's Centenary,
February - March, 2018, Hanoi, Vietnam.
Both authors would like to thank the referee for the very careful reading and the suggestions to improve and clarify the paper.
The first named author is supported by a Research Assistantship of NUS.
The second named author is supported by an Academic Research Fund of NUS.

\section{Polarized or int-amplified endomorphisms}
\label{sec-2}

Let $X$ be a projective variety.
A Cartier divisor is assumed to be integral, unless otherwise indicated.
Denote by $\Pic(X)$ the group of Cartier divisors modulo linear equivalence and $\Pic^0(X)$ the subgroup of the classes in $\Pic(X)$ which are algebraically equivalent to $0$.
Then
$$\NS(X) := \Pic(X)/\Pic^0(X)$$ is the N\'eron-Severi group.
Denote by
$$\N^1(X):=\NS(X) \otimes_{\Z} \mathbb{R}$$
and
$$\NS_{\mathbb{K}}(X):=\NS(X)\otimes_{\mathbb{Z}} \mathbb{K}$$ for $\mathbb{K} := \mathbb{Q}$, $\mathbb{R}$, or $\C$. So $\NS_{\R}(X) = \N^1(X)$.

Let $n:=\dim(X)$.
We can regard $\N^1(X)$ as the space of numerically equivalent classes of $\R$-Cartier divisors.
Two $\R$-Cartier divisors $D_1$ and $D_2$ are {\it numerically equivalent}, denoted as
$$D_1 \equiv D_2$$ if
their classes $[D_1]$ and $[D_2]$ in $\N^1(X)$ are the same, i.e., if
$$(D_1 - D_2) \cdot C = 0$$ for any curve $C$ on $X$.
Denote by
$$\N_r(X)$$ the space of weakly numerically equivalent classes of $r$-cycles with $\R$-coefficients.
Namely, two $r$-cylces $Z_1$ and $Z_2$ are {\it weakly numerically equivalent},
if
the intersection
$${(Z_1 - Z_2)}\cdot L_1 \cdots L_{n-r} = 0$$ for all Cartier divisors $L_i$ (cf.~\cite[Definition 2.2]{MZ}).
When $X$ is normal, we also call $\N_{n-1}(X)$ the space of weakly numerically equivalent classes of Weil $\R$-divisors.
In this case, $\N^1(X)$ can be regarded as a subspace of $\N_{n-1}(X)$
(cf.~\cite[Lemma 3.2]{Zh-TAMS}).

We recall the definitions of the following cones:
\begin{itemize}
\item $\Amp(X)$ is the cone of ample classes in $\N^1(X)$.
\item $\Nef(X)$ is the cone of nef classes in $\N^1(X)$.
\item $\PE^1(X)$ is the cone of pseudo-effective $\R$-Cartier divisor classes in $\N^1(X)$.
\item $\PE_{n-1}(X)$ is the cone of pseudo-effective Weil $\R$-divisor classes in $\N_{n-1}(X)$.
\end{itemize}
A Weil $\R$-divisor $B$ is said to be {\it big} if $B$ is in the interior part of $\PE_{n-1}(X)$.

Let $f:X\to X$ be a surjective endomorphism.
Then $f$ is a finite morphism of degree $d$.
We may define pullback of cycles for $f$, such that $f^\ast$ induces an automorphism of $\N_r(X)$ and $f_\ast f^\ast=d\,\id$; see \cite[Section 2.3]{Zh-comp}.
Note that the above cones are $f^\ast$-invariant.

We refer to \cite[\S2]{MZ} for more information.

\begin{definition}\label{def-f}
Let $f:X\to X$ be a surjective endomorphism of a projective variety $X$.
\begin{itemize}
\item[(1)]
$f$ is {\it numerically polarized}  if $f^{\ast}L \equiv qL$ for some ample Cartier divisor $L$ and integer $q>1$.
\item[(2)] $f$ is {\it numerically quasi-polarized}  if $f^{\ast}L \equiv qL$ for some big Cartier divisor $L$ and integer $q>1$.
\item[(3)] $f$ is {\it quasi-polarized} if $f^{\ast}L \sim qL$ for some big Cartier divisor $L$ and integer $q>1$.
\item[(4)] $f$ is $q$-{\it polarized}  if $f^{\ast}L \sim qL$ for some ample Cartier divisor $L$ and integer $q>1$.
\item[(5)] $f$ is {\it int-amplified} if $f^*L-L=H$ for some ample Cartier divisors $L$ and $H$.
\item[(6)] $f$ is {\it amplified} if $f^*L-L=H$ for some (not necessarily ample) Cartier divisor $L$ and some ample Cartier divisor $H$.
\end{itemize}
\end{definition}

For convenience, sometimes we simply say $f$ is polarized if it is $q$-polarized for some integer $q>1$.
It is easy to see that ``polarized'' implies ``int-amplified'' and ``int-amplified'' implies ``amplified''.
The notion of an amplified endomorphism $f : X \to X$ was first introduced by Krieger and Reschke (cf.~\cite{Kr}); preceding to this, Fakhruddin \cite[Theorem 5.1]{Fak} has shown that for such $f$, the set of $f$-periodic points is Zarisiki dense in $X$.
There do exist amplified {\it automorphisms} (e.g., automorphisms of positive entropy on abelian surfaces), while the degree of an int-amplified endomorphism is always greater than $1$ (cf.~\cite[Lemma 3.10]{Meng}).
Unlike the polarized property (cf.~\cite[Corollary 3.12]{MZ}),
in general, it is impossible to preserve the amplified automorphism property via a birational equivariant lifting (cf.~\cite[Lemma 4.4]{Kr} and \cite[Theorem 1.2]{Re}).

On one hand, we do not know whether the amplified property can be descended via an equivariant morphism (cf.~\cite[Question 1.10]{Kr}).
On the other hand, int-amplified endomorphisms have all the nice properties enjoyed by the polarized endomorphisms (cf.~\cite[\S 3]{MZ}).

Next we focus on showing the equivalence of the first four polarizations in Definition \ref{def-f}.
The following is a norm criterion for a numerically polarized endomorphism.
\begin{proposition}\label{prop-fx-x}(cf.~\cite[Proposition 2.9]{MZ}, \cite[Proposition 3.1]{CMZ})
Let $\varphi:V\to V$ be an invertible linear map of a positive dimensional real vector space equipped with a norm.
Assume
$$\varphi(C)=C$$ for a convex cone $C\subseteq V$ such that $C$ spans $V$ and its closure $\overline{C}$ contains no line.
Let $q$ be a positive number. Then the conditions (i) and (ii) below are equivalent.
\begin{itemize}
\item[(i)] $\varphi(u)=q u$ for some $u\in C^\circ$ (the interior part of $C$).
\item[(ii)]
There exists a constant $N>0$, such that
$$\frac{||\varphi^i||}{q^i}< N$$ for all $i\in \mathbb{Z}$.
\end{itemize}
Assume further the equivalent conditions (i) and (ii). Then the following are true.
\begin{itemize}
\item[(1)] $\varphi$ is a diagonalizable linear map with all eigenvalues of modulus $q$.
\item[(2)] Suppose $q>1$. Then, for any $v\in V$ such that $\varphi(v)-v\in C$, we have $v\in C$.
\end{itemize}
\end{proposition}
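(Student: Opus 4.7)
The plan is to rescale by setting $\psi := q^{-1}\varphi$, so that (i) becomes ``$\psi$ fixes some vector in $C^\circ$'' while (ii) becomes ``the cyclic group $\langle\psi\rangle\subseteq\mathrm{GL}(V)$ is bounded''. The conclusions (1) and (2) then translate into statements about the spectrum of $\psi$ and of $\varphi-\mathrm{id}$ respectively. Since $\varphi$ is invertible with $\varphi(C)=C$, it also preserves the pointed, full-dimensional closed convex cone $\overline{C}$.

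For (i) $\Rightarrow$ (ii), I would use the order-unit norm associated to $u\in C^\circ$,
\[
\|v\|_u := \inf\{t>0 : tu+v\in\overline{C} \text{ and } tu-v\in\overline{C}\}.
\]
Interiority of $u$ makes the defining set non-empty, and pointedness of $\overline{C}$ forces $\|v\|_u=0\Rightarrow v=0$ (otherwise $\overline{C}$ would contain a whole line through the origin). Since $\psi(u)=u$ and $\psi(\overline{C})=\overline{C}$, witnesses $tu\pm v\in\overline{C}$ push forward to $tu\pm\psi(v)\in\overline{C}$, giving $\|\psi\|_u\le 1$; the same argument for $\psi^{-1}$ (which also fixes $u$ and preserves $\overline{C}$) gives $\|\psi^{-1}\|_u\le 1$. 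Hence $\|\psi^i\|_u\le 1$ for every $i\in\mathbb{Z}$, and equivalence of norms on the finite-dimensional $V$ yields (ii).

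For (ii) $\Rightarrow$ (1) and (i), boundedness of $\{\psi^i\}_{i\in\mathbb{Z}}$ means its closure in $\mathrm{GL}(V)$ is a compact subgroup, and averaging any inner product over this group produces a $\psi$-invariant one. So $\psi$ is conjugate to an orthogonal transformation, hence diagonalizable over $\mathbb{C}$ with all eigenvalues of modulus $1$, proving (1). To produce the interior fixed vector, pick $u_0\in C^\circ$ and form the Ces\`aro averages $A_n(u_0):=\frac{1}{n}\sum_{i=0}^{n-1}\psi^i(u_0)\in C$, which by the spectral decomposition converge to $P(u_0)$ where $P$ is the projection onto the $\psi$-fixed subspace. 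The crucial point is that this limit lies in $C^\circ$. Choose $r>0$ with $B(u_0,r)\subseteq\overline{C}$; the uniform bound $\|\psi^{-i}\|\le M$ gives $\psi^i(B(0,r))\supseteq B(0,r/M)$, so $\psi^i(u_0)+B(0,r/M)\subseteq\overline{C}$, and convexity propagates this to $A_n(u_0)+B(0,r/M)\subseteq\overline{C}$. Were $P(u_0)=0$, the limit would embed $B(0,r/M)$ in $\overline{C}$, forcing a line — impossible. So $u:=P(u_0)\in C^\circ$ is the desired fixed vector.

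Conclusion (2) is then a Neumann-series computation. Since $q>1$, all eigenvalues of $\varphi$ have modulus $q\ne 1$ by (1), so $\varphi-\mathrm{id}$ is invertible; diagonalizability plus $|\lambda|=q$ makes $\|\varphi^{-i}\|$ decay like $q^{-i}$, so $\sum_{i\ge 1}\varphi^{-i}$ converges absolutely to $(\varphi-\mathrm{id})^{-1}$. Given $w:=\varphi(v)-v\in C$, each $\varphi^{-i}(w)\in\varphi^{-i}(C)=C$, so the partial sums of $v=\sum_{i\ge 1}\varphi^{-i}(w)$ remain in the convex cone $C$ and realise $v$ as the desired element of $C$. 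The main obstacle throughout is the interiority of the Ces\`aro limit $P(u_0)$: keeping it off $\partial\overline{C}$ is exactly what demands the two-sided uniform bound on $\|\psi^{\pm i}\|$ — forward bounds alone allow the orbit to drift into the boundary and would at best produce a boundary fixed vector.
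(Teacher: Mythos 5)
The paper does not actually prove this proposition here (it only cites \cite[Proposition 2.9]{MZ} and \cite[Proposition 3.1]{CMZ}), so your argument has to stand on its own. The equivalence (i) $\Leftrightarrow$ (ii) and conclusion (1) are handled correctly and along the standard lines: the order-unit norm $\|\cdot\|_u$ attached to the interior fixed vector makes $\psi:=q^{-1}\varphi$ and $\psi^{-1}$ simultaneous contractions, giving the two-sided bound; conversely the two-sided bound makes the closure of $\langle\psi\rangle$ a compact subgroup of $\GL(V)$, hence $\psi$ is diagonalizable with unit-modulus eigenvalues, and your Ces\`aro argument, with the uniform ball $B(0,r/M)$ transported by convexity, correctly forces the limit $P(u_0)$ to lie in $(\overline{C})^{\circ}=C^{\circ}$ and to be nonzero. (You use $\dim V<\infty$ for norm equivalence and compactness; that is the intended setting, as (1) already presupposes.)

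The gap is in (2). The Neumann series gives $v=\lim_{m}\sum_{i=1}^{m}\varphi^{-i}(w)$ with every partial sum in $C$, but a limit of elements of $C$ is only guaranteed to lie in $\overline{C}$: the hypothesis is merely that $C$ is a convex cone, neither open nor closed, and your final sentence ``the partial sums \dots realise $v$ as the desired element of $C$'' silently replaces $C$ by $\overline{C}$. If $C$ is closed (as for $\Nef(X)$, $\PE^1(X)$) you are done; if $C$ is open (as for $\Amp(X)$) you are also done, since then $v=\varphi^{-1}(w)+\varphi^{-1}(v)\in C^{\circ}+\overline{C}\subseteq C^{\circ}$. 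But for a general half-open cone one can have $C+\overline{C}\not\subseteq C$ (e.g.\ take the cone over a square with only some boundary faces included), so an extra step is genuinely needed to land in $C$ itself. One way to close it: let $F$ be the face of $\overline{C}$ with $v\in\operatorname{relint}(F)$; from $\varphi(v)=v+w$ and the extremality of faces one gets $w\in F$, $\varphi(F)\supseteq F$ and hence $\varphi(F)=F$ by equality of dimensions, and one then restricts to $\operatorname{span}(F)$ and inducts on dimension until $v$ lies in the relative interior of the relevant cone, which is contained in $C$. Without some such argument (or the additional hypothesis that $C$ is closed or open, which covers every application in the paper), the last step of (2) does not follow.
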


Applying the above criterion to the cones $\Nef(X)$ and $\PE^1(X)$, we now can say the equivalence of ``numerically quasi-polarized'' and ``numerically polarized''; see \cite[Proposition 3.6]{MZ}.
Furthermore, applying \cite[Lemma 2.3]{Na-Zh}, ``numerically polarized'' is equivalent to ``polarized'' by taking $H$ there to be ample.

\begin{theorem}\label{thm-num-lin}(cf.~\cite[Proposition 1.1]{MZ})
Let $f:X\to X$ be a numerically quasi-polarized endomorphism of a projective variety $X$. Then $f$ is polarized.
\end{theorem}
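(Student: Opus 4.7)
The plan is to establish the theorem by two distinct arguments: first to promote ``numerically quasi-polarized'' to ``numerically polarized'' via the norm criterion of Proposition~\ref{prop-fx-x}, and then to promote ``numerically polarized'' to ``$q$-polarized'' via \cite[Lemma 2.3]{Na-Zh}.

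For the first step, I would set $\varphi := f^{\ast}|_{\N^1(X)}$, an invertible $\R$-linear map because $f$ is finite and surjective. I apply Proposition~\ref{prop-fx-x} first with the cone $C := \PE^1(X)$, which is $\varphi$-invariant, spans $\N^1(X)$, and whose closure contains no line since $X$ is projective. Because $L$ is big, its class lies in $\PE^1(X)^{\circ}$, and $\varphi([L]) = q[L]$ verifies condition (i); hence the equivalent condition (ii) holds, providing a uniform bound $\|\varphi^i\|/q^i < N$ for all $i \in \Z$. Since (ii) is a property of $\varphi$ and $q$ alone, I re-apply the proposition with $C := \Nef(X)$, which is equally $\varphi$-invariant (note $\varphi^{-1} = (\deg f)^{-1} f_{\ast}$ also preserves nefness), spans $\N^1(X)$, and strictly convex: condition (ii) still holds, so condition (i) now produces an ample class $H \in \Amp(X) = \Nef(X)^{\circ}$ with $\varphi(H) = qH$. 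The $q$-eigenspace $W := \ker(\varphi - q\,\id)$ is defined over $\Q$ because $\varphi$ is rational on $\NS(X) \otimes \Q$ and $q \in \Z$, so rational classes are dense in $W$; by openness of $\Amp(X)$ I can perturb $H$ inside $W$ to a $\Q$-ample class and then clear denominators, obtaining an ample Cartier divisor (still denoted $H$) with $f^{\ast}H \equiv qH$. At this point $f$ is numerically polarized.

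For the second step, write $D := f^{\ast}H - qH$, which lies in $\Pic^0(X)$ since it is numerically trivial. To upgrade numerical equivalence to linear equivalence it suffices to find $P \in \Pic^0(X)$ with $(f^{\ast} - q\,\id)(P) = -D$ and set $H' := H + P$, which is still ample: indeed $f^{\ast}H' - qH' = D + (f^{\ast}-q\,\id)(P) = 0$ in $\Pic(X)$. The existence of such a $P$ is exactly the assertion that the endomorphism $f^{\ast} - q\,\id$ of the abelian variety $\Pic^0(X)$ is surjective, i.e., an isogeny, and this is precisely the content of \cite[Lemma 2.3]{Na-Zh} applied with the ample class $H$. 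The main obstacle is concentrated in this step: the first step is a largely formal application of the given proposition combined with a density argument, whereas ruling out $q$ as an ``eigenvalue'' of $f^{\ast}$ on $\Pic^0(X)$ requires genuine non-formal input, typically via growth/Weil-type estimates that exploit $q > 1$ and the interaction between the action of $f^{\ast}$ on $\NS(X)$ and on the tangent space of $\Pic^0(X)$.
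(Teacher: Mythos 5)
Your proposal is correct and follows essentially the same route as the paper, which obtains ``numerically quasi-polarized $\Rightarrow$ numerically polarized'' by applying Proposition~\ref{prop-fx-x} to the cones $\PE^1(X)$ and $\Nef(X)$ (cf.~\cite[Proposition 3.6]{MZ}) and then upgrades numerical to linear equivalence via \cite[Lemma 2.3]{Na-Zh}. The only nitpick is that a numerically trivial Cartier divisor lies a priori in $\Pic^{\tau}(X)$ rather than in $\Pic^0(X)$; replacing $H$ by a suitable multiple kills the torsion class in $\NS(X)$, after which your second step goes through verbatim.
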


The following are useful criteria for int-amplified endomorphisms.

\begin{proposition}\label{prop-3-equiv}(cf.~\cite[Proposition 3.3]{Meng}) Let $f:X\to X$ be a surjective endomorphism of a projective variety $X$.
Then the following are equivalent.
\begin{itemize}
\item[(1)] The endomorphism $f$ is int-amplified.
\item[(2)] All the eigenvalues of
$$\varphi:=f^*|_{\N^1(X)}$$ are of modulus greater than $1$.
\item[(3)] There exists some big $\R$-Cartier divisor $B$ such that $f^*B-B$ is big.
\item[(4)] If $C$ is a $\varphi$-invariant convex cone in $\N^1(X)$,
then
$$\emptyset\neq(\varphi-\id_{\N^1(X)})^{-1}(C)\subseteq C .$$
\end{itemize}
\end{proposition}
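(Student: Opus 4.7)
The plan is to prove the cycle $(1) \Rightarrow (3) \Rightarrow (2) \Rightarrow (1)$ among the first three conditions, and then to bring in $(4)$ via $(2) \Rightarrow (4) \Rightarrow (1)$. The implication $(1) \Rightarrow (3)$ is immediate, as every ample Cartier divisor is big.

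I expect $(3) \Rightarrow (2)$ to be the main obstacle. The key observation is that $\varphi^{-1}$ coincides with $(1/d)\, f_{\ast}$, where $d = \deg f$, and hence preserves the pseudo-effective cone $\PE^1(X)$, since $f_{\ast}$ of an effective Cartier divisor is again effective. I would then apply Perron--Frobenius to the transpose $(\varphi^{-1})^{T}$ acting on the dual cone $\PE^1(X)^{\vee} \subseteq \N_1(X)$, which is again closed, salient, and of nonempty interior; this produces a nonzero functional $u \in \PE^1(X)^{\vee}$ with $\varphi^{T} u = \mu\, u$, where $\mu$ is the reciprocal of the spectral radius of $\varphi^{-1}$ (so $\mu = \min_{i} |\lambda_{i}|$). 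Pairing $u$ with the given relation $\varphi B - B = B'$ yields $(\mu - 1)\, u(B) = u(B')$; since both $B$ and $B'$ lie in the interior of $\PE^1(X)$, the pairings $u(B)$ and $u(B')$ are strictly positive, forcing $\mu > 1$, so every eigenvalue of $\varphi$ has modulus greater than $1$.

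For $(2) \Rightarrow (1)$, I would fix any ample Cartier divisor $H$ and set
\[
L \;:=\; (\varphi - \id)^{-1} H \;=\; \sum_{k \geq 1} \varphi^{-k} H,
\]
the series converging in $\N^1(X)$ because the spectral radius of $\varphi^{-1}$ is strictly less than $1$. For any nonzero $v \in \NE(X)$, one computes $L \cdot v = \sum_{k \geq 1} d^{-k}\, H \cdot (f^{k})^{\ast} v > 0$, since each $(f^{k})^{\ast} v$ is a nonzero class in $\NE(X)$ and $H$ is ample; Kleiman's criterion then yields $L$ ample, producing $\varphi L - L = H$ with both $L$ and $H$ ample. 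The same series technique gives $(2) \Rightarrow (4)$: for $v$ with $(\varphi - \id) v = c \in C$, write $v = \sum_{k \geq 1} \varphi^{-k} c$ and note that each $\varphi^{-k} c$ lies in $C$ (using that $\varphi$ restricts to a bijection of $C$), so the partial sums remain in the convex cone $C$ and their limit $v$ lies in $C$; non-emptiness is automatic from the invertibility of $\varphi - \id$. Lastly, $(4) \Rightarrow (1)$ follows by applying $(4)$ to the cone $C = \Amp(X)$, which is permuted bijectively by $\varphi$ since $f$ is finite surjective: the nonempty preimage furnishes some $L \in \Amp(X)$ with $\varphi L - L \in \Amp(X)$.
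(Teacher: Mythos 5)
The paper itself only cites \cite[Proposition 3.3]{Meng} for this statement, so I am judging your argument on its own terms; your overall strategy (cone-theoretic, geometric series for $(\varphi-\id)^{-1}$, duality for the lower bound on eigenvalues) is in the same spirit as the cited source, and the implications $(1)\Rightarrow(3)$, $(3)\Rightarrow(2)$, $(2)\Rightarrow(1)$ and $(4)\Rightarrow(1)$ are essentially sound. In particular your $(3)\Rightarrow(2)$ via Perron--Frobenius applied to $(\varphi^{-1})^{T}$ on the dual cone $\PE^1(X)^{\vee}$ is a clean and correct route, though you should justify that $\varphi^{-1}$ preserves $\PE^1(X)$ more carefully than by ``$f_*$ of effective is effective'' (pushforward of a Cartier divisor is only a Weil divisor, and $X$ is not assumed normal); the cleanest argument is $\vol(f^*D)=\deg(f)\cdot\vol(D)$, which gives that $f^*D$ is big, hence pseudo-effective, if and only if $D$ is. Also, in $(2)\Rightarrow(1)$ and $(4)\Rightarrow(1)$ the definition of int-amplified asks for \emph{integral} ample Cartier divisors, while your $L$ is a priori only an $\R$-class; this is harmless since $(\varphi-\id)^{-1}$ is defined over $\Q$ (or since the relevant condition on $L$ is open), but it deserves a sentence.

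The genuine gap is in $(2)\Rightarrow(4)$. The cone $C$ is only assumed to be a $\varphi$-invariant convex cone --- it is \emph{not} assumed closed --- and your final step ``the partial sums remain in $C$ and their limit $v$ lies in $C$'' is exactly the assertion that $C$ is closed under the relevant limits. This is false for a general convex cone (e.g.\ $\{x>0\}\cup\{(0,y):y\ge 0\}$ in $\R^2$ is a convex cone not containing all limits of its sequences), and it matters for your own chain of implications, because the cone you feed into $(4)$ in the step $(4)\Rightarrow(1)$ is $\Amp(X)$, which is open. The argument can be repaired: write $v=\varphi^{-1}(c)+w$ with $w=\sum_{k\ge 2}\varphi^{-k}(c)$; all the summands $\varphi^{-k}(c)$ lie in $C$ (here you do need the invariance to mean $\varphi(C)=C$, which is the convention of Proposition~\ref{prop-fx-x} and holds for all the cones arising from $f^*$; with only $\varphi(C)\subseteq C$ the implication is actually false, as one sees from $\varphi=\left(\begin{smallmatrix}3&1\\1&3\end{smallmatrix}\right)$ acting on the closed first quadrant, where $(\varphi-\id)^{-1}(e_1)=\frac13(2,-1)$). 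Then $w$ lies in the closure of the subcone generated by the $\varphi^{-k}(c)$, and $\varphi^{-1}(c)$ lies in the relative interior of a finitely generated subcone of $C$ with the same span; the standard fact that a relative interior point of a convex cone plus a point of its closure stays in the relative interior (hence in $C$) then gives $v\in C$. As written, however, the limit step is unjustified, so $(2)\Rightarrow(4)$ --- and with it the closing of your loop through $(4)$ --- is incomplete.
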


Considering the action $f^*|_{\N_{n-1}(X)}$ and the cone $\PE_{n-1}(X)$, we have similar criteria as follows.

\begin{proposition}\label{prop-4-equiv}(cf.~\cite[Proposition 3.4]{Meng}) Let $f:X\to X$ be a surjective endomorphism of an $n$-dimensional normal projective variety $X$.
Then the following are equivalent.
\begin{itemize}
\item[(1)] The endomorphism $f$ is int-amplified.
\item[(2)] All the eigenvalues of
$$\varphi:=f^*|_{\N_{n-1}(X)}$$ are of modulus greater than $1$.
\item[(3)] There exists some big Weil $\R$-divisor $B$ such that $f^*B-B$ is a big Weil $\R$-divisor.
\item[(4)] If $C$ is a $\varphi$-invariant convex cone in $\N_{n-1}(X)$,
then
$$\emptyset\neq(\varphi-\id_{\N_{n-1}(X)})^{-1}(C)\subseteq C .$$
\end{itemize}
\end{proposition}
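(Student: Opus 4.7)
The plan is to mirror the proof of Proposition~\ref{prop-3-equiv} (namely \cite[Proposition~3.3]{Meng}), replacing the pair $(\N^1(X),\PE^1(X))$ by $(\N_{n-1}(X),\PE_{n-1}(X))$. Before running the argument I would verify two enabling facts: $\PE_{n-1}(X)$ is a closed, full-dimensional, $\varphi$-invariant convex cone in $\N_{n-1}(X)$ whose closure contains no line (if $B$ and $-B$ are both pseudo-effective, intersecting with every $(n-1)$-tuple of Cartier classes yields zero, hence $B\equiv 0$ weakly); and the inclusion $\N^1(X)\hookrightarrow\N_{n-1}(X)$ of \cite[Lemma~3.2]{Zh-TAMS} is $\varphi$-equivariant, so the spectrum of $\varphi|_{\N^1(X)}$ is a subset of that of $\varphi|_{\N_{n-1}(X)}$.

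I would then run the cycle of implications $(1)\Rightarrow(3)\Rightarrow(4)\Rightarrow(2)\Rightarrow(1)$. The step $(1)\Rightarrow(3)$ is immediate: take $B:=L$, since an ample Cartier divisor is automatically a big Weil divisor. For $(3)\Rightarrow(4)$, iterating
$$\varphi^{i+1}(B)-\varphi^i(B)=\varphi^i\!\bigl(\varphi(B)-B\bigr)\in\PE_{n-1}(X)^{\circ}$$
and exploiting the pointedness of $\PE_{n-1}(X)$ yields enough spectral control on $\varphi$ to conclude that every eigenvalue has modulus strictly greater than $1$. Once this is in hand, $\varphi-\id$ is invertible, so $(\varphi-\id)^{-1}(C)$ is non-empty for every $\varphi$-invariant convex cone $C$; and the containment $(\varphi-\id)^{-1}(C)\subseteq C$ follows from the convergent Neumann-type expansion $(\varphi-\id)^{-1}=\sum_{i\geq 1}\varphi^{-i}$, each summand preserving $C$ by $\varphi^{-1}$-invariance.

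For $(4)\Rightarrow(2)$ I would argue contrapositively: given an eigenvalue $\lambda$ of $\varphi$ with $|\lambda|\leq 1$, one constructs a small $\varphi$-invariant convex cone inside the associated real spectral subspace on which the condition in (4) fails, contradicting it. Finally $(2)\Rightarrow(1)$ is a direct appeal to Proposition~\ref{prop-3-equiv}: since the spectrum of $\varphi|_{\N^1(X)}$ sits inside that of $\varphi|_{\N_{n-1}(X)}$, its eigenvalues also have modulus strictly greater than $1$, so $f$ is int-amplified.

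The hard step is $(3)\Rightarrow(4)$, which carries the substantive content of the proposition: the single expansion hypothesis must be upgraded to full spectral control of $\varphi$ on $\N_{n-1}(X)$. Because $\N_{n-1}(X)$ carries no self-intersection pairing---only a pairing with $(n-1)$-tuples of Cartier classes---the iteration argument familiar from the $\N^1$ setting has to be re-executed using Cartier test classes as detectors of weak numerical equivalence, which is where the technical care is concentrated.
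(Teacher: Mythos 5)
Your overall strategy---transplant the proof of Proposition~\ref{prop-3-equiv} to the pair $(\N_{n-1}(X),\PE_{n-1}(X))$ after checking that $\PE_{n-1}(X)$ is a closed, salient, spanning, $f^*$-invariant cone and that the inclusion $\N^1(X)\hookrightarrow\N_{n-1}(X)$ is $f^*$-equivariant---is exactly what the paper intends (it proves nothing beyond the remark that one considers $f^*|_{\N_{n-1}(X)}$ and the cone $\PE_{n-1}(X)$, citing \cite[Proposition~3.4]{Meng}). Your preliminary verifications, the implication $(1)\Rightarrow(3)$, the spectral estimate hidden inside your $(3)\Rightarrow(4)$ (which is really a proof of $(3)\Rightarrow(2)$), the Neumann series for $(2)\Rightarrow(4)$, and the reduction $(2)\Rightarrow(1)$ via the equivariant embedding of $\N^1(X)$ are all sound.

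The genuine gap is your $(4)\Rightarrow(2)$. For an eigenvalue $\lambda$ with $|\lambda|\le 1$ which is not a positive real number, there is in general \emph{no} $\varphi$-invariant convex cone inside the associated real spectral subspace violating the condition in (4): if $\lambda$ is non-real or negative, $\varphi$ acts on the corresponding real plane (or line) as a rotation composed with a contraction, and the only invariant convex cones there are $\{0\}$, linear subspaces, and the whole subspace; on each of these one has $(\varphi-\id)^{-1}(C)=C$ as soon as $1$ is not an eigenvalue of $\varphi$, so no contradiction with (4) arises. (Your construction does work for $\lambda=1$ and for positive real $\lambda<1$, but not otherwise.) The repair---and the route the cited proof takes---is to close the cycle by proving $(4)\Rightarrow(1)$ (or $(4)\Rightarrow(3)$) directly: apply (4) to the $\varphi$-invariant cone given by the image of $\Amp(X)$ in $\N_{n-1}(X)$ (resp.\ to the big cone $\PE_{n-1}(X)^{\circ}$), obtaining an ample $\R$-Cartier class $u$ with $\varphi(u)-u$ ample (resp.\ a big class $B$ with $\varphi(B)-B$ big); a small perturbation to a rational class then yields (1). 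Since you have independently established $(1)\Rightarrow(3)\Rightarrow(2)\Rightarrow(4)$, rerouting this single edge completes the equivalence. A smaller point: in $(2)\Rightarrow(4)$ you need $\varphi^{-1}(C)\subseteq C$ so that the partial sums $\sum_{k=1}^{N}\varphi^{-k}(c)$ stay in $C$; this holds if ``$\varphi$-invariant'' is read as $\varphi(C)=C$, but not if it only means $\varphi(C)\subseteq C$, so state your convention.
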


By the above criteria, we may easily show that the properties of ``numerically polarized'' and ``int-amplified'' are preserved via every equivariant descending.

\begin{proposition}\label{lem-int-des1}(cf.~\cite[Theorem 3.11]{MZ}, \cite[Lemmas 3.5, 3.6]{Meng}) Let $\pi:X\dasharrow Y$ be a dominant rational map of projective varieties.
Let $f:X\to X$ and $g:Y\to Y$ be two surjective endomorphisms such that 
$$g\circ\pi=\pi\circ f .$$
Suppose $f$ is numerically polarized (resp.~int-amplified). Then $g$ is also numerically polarized (resp.~int-amplified).
\end{proposition}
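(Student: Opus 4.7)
The plan is to resolve the rational map $\pi$ by its graph, transfer the hypothesis from $f$ up to the resolved graph, and then descend to $g$ via the spectral criteria of Propositions \ref{prop-fx-x} and \ref{prop-3-equiv}.

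First, let $\Gamma\subseteq X\times Y$ be the closure of the graph of $\pi$ and let $\nu:\tilde\Gamma\to\Gamma$ be its normalization. The equivariance $g\circ\pi=\pi\circ f$ forces the product endomorphism $f\times g$ of $X\times Y$ to preserve $\Gamma$; since $(f\times g)|_\Gamma\circ\nu$ is a surjective morphism from the normal projective variety $\tilde\Gamma$ to $\Gamma$, it factors uniquely through $\nu$ and yields an endomorphism $\tilde h:\tilde\Gamma\to\tilde\Gamma$. The two projections give a birational morphism $\tilde p:\tilde\Gamma\to X$ and a surjective morphism $\tilde q:\tilde\Gamma\to Y$ satisfying $\tilde p\circ\tilde h=f\circ\tilde p$ and $\tilde q\circ\tilde h=g\circ\tilde q$.

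Next I would lift the hypothesized property from $f$ to $\tilde h$ along the birational morphism $\tilde p$. In the int-amplified case, Proposition \ref{prop-3-equiv}(3) supplies a big $\R$-Cartier divisor $B$ on $X$ with $f^*B-B$ big; both $\tilde p^*B$ and $\tilde h^*\tilde p^*B-\tilde p^*B=\tilde p^*(f^*B-B)$ remain big on $\tilde\Gamma$ since pullback by a birational morphism of projective varieties preserves bigness, so $\tilde h$ is int-amplified. In the numerically polarized case, Theorem \ref{thm-num-lin} lets me assume $f^*L\sim qL$ for some ample $L$ on $X$; then $\tilde p^*L$ is big with $\tilde h^*\tilde p^*L\sim q\tilde p^*L$, so $\tilde h$ is numerically quasi-polarized, and another application of Theorem \ref{thm-num-lin} promotes this to $\tilde h$ polarized.

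Finally I would descend from $\tilde h$ to $g$ along the surjective morphism $\tilde q$. Since $\tilde q_*\tilde q^*=\deg(\tilde q)\,\id$ on $\N^1$, the pullback $\tilde q^*:\N^1(Y)\hookrightarrow\N^1(\tilde\Gamma)$ is injective, and it intertwines $g^*$ with $\tilde h^*$, so $W:=\tilde q^*(\N^1(Y))$ is a $\tilde h^*$-invariant subspace on which $\tilde h^*$ is conjugate to $g^*|_{\N^1(Y)}$. In the int-amplified case, Proposition \ref{prop-3-equiv}(2) gives that all eigenvalues of $\tilde h^*$ have modulus greater than $1$; they therefore also control $g^*|_{\N^1(Y)}$, and the same proposition in reverse delivers that $g$ is int-amplified. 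In the numerically polarized case, Proposition \ref{prop-fx-x}(1) applied to $\tilde h$ yields that $\tilde h^*$ is diagonalizable with all eigenvalues of modulus $q$, properties that restrict to $W$ and hence transfer to $g^*|_{\N^1(Y)}$; in particular $\|(g^*)^i\|/q^i$ is bounded for every $i\in\Z$, and $g^*$ acts as a bijection of $\Nef(Y)$ (the projection formula yields both $g^*\Nef(Y)\subseteq\Nef(Y)$ and $(g^*)^{-1}\Nef(Y)\subseteq\Nef(Y)$). The implication (ii)$\Rightarrow$(i) of Proposition \ref{prop-fx-x} applied to $\varphi=g^*$, $V=\N^1(Y)$, $C=\Nef(Y)$ then produces an ample eigenclass $u\in\Nef(Y)^\circ=\Amp(Y)$ with $g^*u=qu$; rationality of the $q$-eigenspace supplies an ample Cartier divisor $L$ on $Y$ with $g^*L\equiv qL$.

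The main obstacle is the descent step in the numerically polarized case: one must upgrade the abstract spectral data on $\N^1(Y)$ to an honest ample eigenclass, which is exactly what the norm criterion of Proposition \ref{prop-fx-x} is designed to do, once $g^*$ is shown to be invertible and cone-preserving. The lifting step along $\tilde p$ and the eigenvalue-based descent in the int-amplified case are, by contrast, essentially formal consequences of the criteria already established.
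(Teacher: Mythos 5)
Your proposal is correct and follows essentially the route the paper intends (it only sketches the argument, deferring to \cite[Theorem 3.11]{MZ} and \cite[Lemmas 3.5, 3.6]{Meng}): pass to the normalized graph closure, lift the property along the birational projection, and descend along the surjective projection using the criteria of Propositions \ref{prop-fx-x} and \ref{prop-3-equiv}. The only step deserving the extra care you gave it is the production of an honest ample eigenclass on $Y$ in the numerically polarized case, which you handle correctly via the norm criterion together with the invertibility of $g^*$ on $\Nef(Y)$ and the rationality of the $q$-eigenspace.
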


\section{Singularities, anti-canonical divisor and Kodaira dimension}\label{sec-3}

We refer to \cite[Chapters 2 and 5]{KM} for the definitions and the properties of log canonical (lc), Kawamata log terminal (klt), canonical and terminal singularities.

Let $f:X\to X$ be a surjective endomorphism of a normal projective variety $X$.
When $\dim(X)=2$, Wahl \cite[Theorem 2.8]{Wa} showed that
$X$ has at worst lc singularities.
Broustet and H\"oring \cite[Corollary 1.5]{BH} generalized this result to higher dimensions by adding extra assumptions that $f$ is polarized and $X$ is $\Q$-Gorenstein.
We may further weaken the extra assumption ``$f$ is polarized" to the assumption ``$f$ is int-amplified."

\begin{theorem}\label{main-thm-lc}(cf.~\cite[Theorem 1.6]{Meng})
Let $X$ be a $\Q$-Gorenstein normal projective variety over the field $k$ of characteristic $0$ admitting an int-amplified endomorphism.
Then $X$ has at worst lc singularities.
\end{theorem}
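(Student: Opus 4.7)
The plan is to adapt Broustet--H\"oring's proof of the polarized case by systematically replacing the scalar relation $f^*L \equiv qL$ with the convex-cone invariance criteria of Propositions \ref{prop-3-equiv}(4) and \ref{prop-4-equiv}(4), which are the int-amplified substitutes for the Perron--Frobenius scaling used in the polarized setting.

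\textbf{Step 1 (Pseudo-effectiveness of $-K_X$).} Since $X$ is $\Q$-Gorenstein, $-K_X$ defines a class in $\N^1(X)$. The Riemann--Hurwitz formula $K_X = f^*K_X + R_f$, with effective ramification divisor $R_f$, gives
$$f^*(-K_X) - (-K_X) = R_f \in \PE^1(X).$$
Writing $\varphi := f^*|_{\N^1(X)}$, the pseudo-effective cone $\PE^1(X)$ is $\varphi$-invariant, so Proposition \ref{prop-3-equiv}(4) immediately yields $-K_X \in \PE^1(X)$.

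\textbf{Step 2 (Equivariant log resolution).} Assume for contradiction that $X$ is not lc. Since non-lc divisorial valuations of bounded log discrepancy form a finite set on $X$, after replacing $f$ by a sufficiently high iterate (still int-amplified by Proposition \ref{prop-3-equiv}) I can arrange for $f$ to permute this finite set. I then take a log resolution $\pi: Y \to X$ that extracts exactly the non-lc valuations and lift $f$ equivariantly to $\widetilde f : Y \to Y$; by Proposition \ref{lem-int-des1} the lift $\widetilde f$ remains int-amplified. Write $K_Y = \pi^*K_X + \sum_i a_i E_i$, with some $a_{i_0} < -1$ by the non-lc hypothesis.

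\textbf{Step 3 (Numerical contradiction).} Pulling back via $\pi$, Step 1 gives $\pi^*(-K_X) \in \PE_{n-1}(Y)$. I would then apply Proposition \ref{prop-4-equiv}(4) on $\N_{n-1}(Y)$ to the $\widetilde f^*$-invariant convex cone generated by $\PE_{n-1}(Y)$ together with the non-lc exceptional divisors $E_i$ (suitably weighted by their negative discrepancies). Tracking the ramification divisor $R_{\widetilde f}$ along the components with $a_i < -1$, the non-lc contribution accumulates under iteration of $\widetilde f^*$, whose eigenvalues all have modulus $>1$; this would produce an unbounded family in the finite-dimensional space $\N_{n-1}(Y)$, which is absurd.

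\textbf{Main obstacle.} The hardest ingredients are Step 2 (equivariant lifting preserving non-lc data after iteration) and the precise cone construction in Step 3. In the polarized setting a single scalar $q$ controls every growth rate; here one must argue purely through convex-cone invariance, which is more delicate because $\varphi$ may have several eigenvalues of distinct moduli (all still $>1$). I expect the decisive input to be Proposition \ref{prop-4-equiv}(4) applied to a cone encoding both pseudo-effectiveness and the non-lc locus, combined with the finiteness of non-lc valuations of bounded discrepancy, which guarantees that some iterate of $f$ genuinely permutes them.
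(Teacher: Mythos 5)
Your Step 1 is correct, and it is exactly Theorem \ref{main-thm-k} of the paper: the ramification formula $K_X=f^*K_X+R_f$ gives $f^*(-K_X)-(-K_X)=R_f\in\PE^1(X)$, and Proposition \ref{prop-3-equiv}(4) applied to the $f^*$-invariant cone $\PE^1(X)$ yields $-K_X\in\PE^1(X)$. The paper offers no further argument for Theorem \ref{main-thm-lc} beyond the citation to \cite[Theorem 1.6]{Meng}, but the proof there (building on Broustet--H\"oring) does not resemble your Steps 2--3, and those steps have genuine gaps.

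The fatal one is the ``equivariant log resolution'' in Step 2. A finite surjective endomorphism of degree $>1$ does not lift to a morphism on a log resolution $\pi:Y\to X$ in general: already for $[x:y:z]\mapsto[x^2:y^2:z^2]$ on $\PP^2$ and $Y$ the blow-up of a general point $p$, any lift would have to contract the exceptional curve to the single point $\pi^{-1}(f(p))$, so no finite surjective lift exists. Nothing in the paper produces such a lift: Proposition \ref{lem-int-des1} only transfers the int-amplified property along an \emph{already given} equivariant dominant rational map; it does not construct $\widetilde f$ as a morphism, and the paper explicitly warns that equivariant birational liftings are delicate. In addition, $f$ acts on divisorial valuations over $X$ with ramification indices, not as a permutation of a finite set, and when $X$ is not lc the collection of valuations with discrepancy $<-1$ is infinite, so the finiteness underlying your iteration step also needs justification. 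Step 3 then does not close the argument: exhibiting ``an unbounded family in the finite-dimensional space $\N_{n-1}(Y)$'' is not a contradiction (one needs a \emph{fixed} class whose norm is forced to be unbounded), and the cone you propose --- $\PE_{n-1}(Y)$ enlarged by negatively weighted exceptional divisors --- is neither shown to be invariant under the pullback nor to have closure containing no line, which are the hypotheses of Propositions \ref{prop-fx-x} and \ref{prop-4-equiv}. The actual proof stays on $X$: using the behaviour of log discrepancies under finite morphisms (\cite[Proposition 5.20]{KM}, via the ramification formula) one shows that the non-lc locus $\Nlc(X)$ is totally invariant under an iterate of $f$, and the contradiction is then extracted from the structure of totally invariant subvarieties of an int-amplified endomorphism, with the cone criteria applied only on $X$ itself. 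If you want to salvage your outline, replace the analysis upstairs on a resolution by this downstairs analysis of $\Nlc(X)$.
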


When $f$ is polarized and $X$ is smooth, Boucksom, de Fernex and Favre \cite[Theorem C]{BFF} showed that $-K_X$ is pseudo-effective.
Cascini, Meng and Zhang \cite[Theorem 1.1 and Remark 3.2]{CMZ} used a different method to
show further that $-K_X$ is weakly numerically equivalent to an effective Weil $\Q$-divisor without the assumption of $X$ being smooth.
By applying Propositions \ref{prop-3-equiv} and \ref{prop-4-equiv} and the ramification divisor formula, we have the following result for the int-amplified case.

\begin{theorem}\label{main-thm-k}(cf.~\cite[Theorem 1.5]{Meng}) Let $X$ be a normal projective variety admitting an int-amplified endomorphism.
Then $-K_X$ is weakly numerically equivalent to some effective Weil $\Q$-divisor.
If $X$ is further assumed to be $\Q$-Gorenstein, then $-K_X$ is numerically equivalent to some effective $\Q$-Cartier divisor.
\end{theorem}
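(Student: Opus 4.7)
The plan is to combine the ramification divisor formula with the invariant-cone criteria supplied by Propositions \ref{prop-3-equiv}(4) and \ref{prop-4-equiv}(4). Set $\varphi := f^*|_{\N_{n-1}(X)}$. For the finite surjective morphism $f$ of normal varieties we have $K_X \sim f^*K_X + R_f$ with $R_f$ an effective integral Weil divisor, and hence, at the level of classes in $\N_{n-1}(X)$,
$$(\varphi - \id)(-K_X) = R_f,$$
where $[R_f]$ belongs to the convex cone $C$ of classes of effective Weil $\R$-divisors.

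Next I would verify that $C$ is $\varphi$-invariant, in fact that $\varphi(C)=C$: the pullback $f^*$ of an effective Weil divisor is effective because $f$ is a finite morphism of normal varieties, and the pushforward $f_*$ likewise preserves effectivity; combined with $f_*\circ f^* = d\cdot\id$, this forces both inclusions $\varphi(C)\subseteq C$ and $\varphi^{-1}(C)\subseteq C$. Because $f$ is int-amplified, Proposition \ref{prop-4-equiv}(2) tells us all eigenvalues of $\varphi$ have modulus greater than $1$, so $\varphi-\id$ is invertible; then Proposition \ref{prop-4-equiv}(4) (which rests on Proposition \ref{prop-fx-x}(2) and applies here since $C$ spans $\N_{n-1}(X)$ and its closure $\PE_{n-1}(X)$ contains no line) yields
$$-K_X = (\varphi - \id)^{-1}(R_f) \in C,$$
i.e.\ $-K_X \equiv \sum_{i=1}^k r_i E_i$ in $\N_{n-1}(X)$ for some $r_i\ge 0$ and finitely many prime Weil divisors $E_i$. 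To obtain a $\Q$-representative, note that $[-K_X]$ and each $[E_i]$ are rational classes in $\N_{n-1}(X)$, so the system $\sum_i s_i[E_i]=[-K_X]$ with $s_i \ge 0$ cuts out a non-empty rational polyhedron in $\R^k$, which necessarily contains a rational point; any such point yields an effective Weil $\Q$-divisor $E'$ with $-K_X \equiv E'$.

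For the $\Q$-Gorenstein statement, $[-K_X]$ now lives in the rational part of $\N^1(X)$, so one repeats the argument with $\N_{n-1}(X)$, $\PE_{n-1}(X)$ and Proposition \ref{prop-4-equiv} replaced by $\N^1(X)$, $\PE^1(X)$ and Proposition \ref{prop-3-equiv}, taking $C$ to be the cone of effective $\R$-Cartier classes in $\N^1(X)$. I expect the most delicate step to be the verification that $C$ genuinely satisfies all the hypotheses of the invariant-cone criterion, especially the two-sided invariance $\varphi(C)=C$ and the salient-closure condition; the passage from $\R$- to $\Q$-coefficients is a standard rational linear-programming argument, and the overall architecture otherwise follows cleanly from the ramification formula and the two cited propositions.
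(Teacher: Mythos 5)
Your proposal follows exactly the strategy the paper indicates: write the ramification divisor formula as $(\varphi-\id)(-K_X)=R_f$ with $R_f$ effective, apply the invariant-cone criteria of Propositions \ref{prop-3-equiv}(4) and \ref{prop-4-equiv}(4) to the ($\varphi$-invariant) cone of effective classes to conclude $-K_X$ lies in that cone, and then rationalize the coefficients. This is essentially the same approach as the paper's own (sketched) proof, cf.~\cite[Theorem 1.5]{Meng}, and it is correct.
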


Let $f:X\to X$ be an amplified endomorphism of a projective variety $X$.
By taking the equivariant Iitaka fibration (cf.~\cite[Theorem A]{NZ09}), we obtain the following constraint on the Kodaira dimension of $X$.

\begin{theorem}\label{lem-amp-kod}(cf.~\cite[Lemma 2.5]{Meng})
Let $f:X\to X$ be an amplified endomorphism of a projective variety $X$.
Then the Kodaira dimension $\kappa(X)\le 0$.
\end{theorem}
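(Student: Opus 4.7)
The plan is to argue by contradiction: suppose $\kappa(X)\ge 1$ and derive an inconsistency with $f$ being amplified. First, I would invoke the equivariant Iitaka fibration \cite[Theorem A]{NZ09}. Replacing $f$ by a sufficiently high iterate, I obtain a commutative diagram
\[
\xymatrix{X \ar@{-->}[r]^-{\pi} \ar[d]_{f} & Y \ar[d]^{g} \\ X \ar@{-->}[r]^-{\pi} & Y}
\]
in which $\pi$ is the Iitaka fibration, so $\dim Y=\kappa(X)\ge 1$, and $g\colon Y\to Y$ is surjective. Since $Y$ is of general type on a suitable smooth model, $g$ must be an automorphism of finite order, and after a further iteration I may assume $g=\id_Y$.

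Next I would translate this into numerical data. For every Cartier divisor $D$ on $Y$ one has $f^{*}\pi^{*}D = \pi^{*}g^{*}D = \pi^{*}D$, so the nonzero subspace $\pi^{*}\N^{1}(Y)\subset \N^{1}(X)$ lies in the $1$-eigenspace of $\varphi:=f^{*}|_{\N^{1}(X)}$. Via the adjoint identity $\int_X (f^{*}\alpha)\cdot\beta=\int_X \alpha\cdot f_{*}\beta$, this forces the image of $\varphi-\id$ to lie in the annihilator of the nonzero subspace $\ker(f_{*}-\id)\subset N_{1}(X)$. Consequently, exhibiting a nonzero pseudo-effective class $\gamma\in\ker(f_{*}-\id)$ would produce the contradiction, because the amplified hypothesis $H:=f^{*}L-L$ ample then gives both $H\cdot\gamma=0$ (as $H$ lies in the image of $\varphi-\id$) and $H\cdot\gamma>0$ (from $H$ ample and $\gamma$ nonzero pseudo-effective).

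To construct such a $\gamma$, I would start from $v:=\pi^{*}D$ with $D$ ample on $Y$. The class $v$ is $\varphi$-fixed and nef, and because $f^{*}$ is a ring homomorphism on the Chow ring, so is $v^{\dim Y}$, which is represented by a positive multiple of the general fibre of $\pi$. Combining $f^{*}v^{\dim Y}=v^{\dim Y}$ with the identity $f_{*}f^{*}=\deg(f)\cdot\id$ and the projection formula applied to products of the form $v^{\dim Y}\cdot A^{n-\dim Y-1}$ for an auxiliary ample divisor $A$, one can manufacture the required pseudo-effective, $f_{*}$-invariant $1$-cycle class. The resulting contradiction forces $\dim Y=0$, i.e.\ $\kappa(X)\le 0$.

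The main obstacle is this last step: explicitly producing a nonzero pseudo-effective, $f_{*}$-invariant $1$-cycle class purely from the fibration data. This is precisely where the positivity $\dim Y\ge 1$ must be exploited, and it requires a careful interplay between the fixed nef class $v=\pi^{*}D$, the projection formula for the finite surjective map $f$, and the positivity of the general fibre of $\pi$.
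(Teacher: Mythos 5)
Your opening move coincides with the paper's: Theorem \ref{lem-amp-kod} is obtained by passing to the equivariant Iitaka fibration of \cite[Theorem A]{NZ09} (and then quoting \cite[Lemma 2.5]{Meng}), and your contradiction mechanism --- $H\cdot\gamma=0$ versus $H\cdot\gamma>0$ for a nonzero $f_*$-fixed class $\gamma\in\NE(X)$, where $H=f^{*}L-L$ --- is sound linear algebra. Two preliminary corrections. The induced $g$ on the base $Y$ has finite order, but \emph{not} because ``$Y$ is of general type'': the base of an Iitaka fibration is in general far from general type (an elliptic surface of Kodaira dimension one fibred over $\mathbb{P}^1$ already shows this). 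The finiteness of the order of $g$ is itself part of the content of \cite[Theorem A]{NZ09}, coming from the action of $f^{*}$ on the pluricanonical ring, so you should cite it rather than argue it this way. Also, when $\kappa(X)=\dim X$ your construction leaves no room for the auxiliary factor $A^{\,n-\dim Y-1}$ (the exponent is $-1$); that case needs the separate, easy observation that a surjective endomorphism of a variety of general type is an automorphism of finite order, which is incompatible with $f^{*}L-L$ being ample.

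The genuine gap is the one you flag yourself: producing a nonzero class $\gamma\in\NE(X)$ with $f_*\gamma=\gamma$. The candidate you sketch, $\gamma=v^{k}\cdot A^{\,n-k-1}$ with $k=\dim Y$ and $A$ an auxiliary ample divisor, does not work. By the projection formula $f_*(f^{*}\beta\cdot\alpha)=\beta\cdot f_*\alpha$ one gets $f_*\gamma=v^{k}\cdot f_*\bigl(A^{\,n-k-1}\bigr)$, and there is no reason for $f_*\bigl(A^{\,n-k-1}\bigr)$ to equal $A^{\,n-k-1}$, since $A$ is not $f^{*}$-fixed. Worse, the verification is circular: for every Cartier divisor $M$ the projection formula gives
\[
M\cdot f_*\gamma=f^{*}M\cdot\gamma ,
\]
so $f_*\gamma=\gamma$ in $\N_1(X)$ is \emph{equivalent} to $(f^{*}M-M)\cdot\gamma=0$ for all $M$, and taking $M=L$ this is exactly the vanishing $H\cdot\gamma=0$ you are trying to deduce. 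What the fibration data actually gives you is only that $\pi_*\circ f_*=\pi_*$, i.e.\ $f_*$ acts as the identity on $\N_1(X)$ modulo the vertical subspace $\Ker(\pi_*)$; promoting this to an honest $f_*$-fixed class inside $\NE(X)$ requires controlling the vertical drift of $f_*^{\,i}\gamma_0-\gamma_0$, which you have not done. So as written the proof does not close at its decisive step; you should either supply this construction or follow the argument of \cite[Lemma 2.5]{Meng}.
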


\section{Equivariant MMP with respect to an endomorphism}\label{sec-4}

This section generalises and extends results in \cite{Zh-comp} to higher dimensions.

Let
$$(*):X_1\dasharrow X_2\dasharrow\cdots\dasharrow X_r$$
be a finite sequence of dominant rational maps of projective varieties.
Let $f:X_1\to X_1$ be a surjective endomorphism.
We say the sequence $(*)$ is {\it $f$-equivariant} if the following diagram is commutative
$$\xymatrix{
X_1\ar@{-->}[r]\ar[d]^{f_1} &X_2\ar@{-->}[r]\ar[d]^{f_2} &\cdots\ar@{-->}[r] &X_r\ar[d]^{f_r}\\
X_1\ar@{-->}[r] &X_2\ar@{-->}[r] &\cdots\ar@{-->}[r] &X_r\\
}
$$
where $f_1=f$ and all $f_i$ are surjective endomorphisms.
Let $S$ be a set of surjective endomorphisms of $X$.
We say the sequence $(*)$ is {\it $S$-equivariant} if it is $g$-equivariant for any $g\in S$.

We first need the following key lemma for the theory of the equivariant MMP.

\begin{lemma}\label{lem-fin-per}(cf.~\cite[Lemma 8.1]{Meng}, \cite[Lemma 6.1]{MZ})
Let $f:X\to X$  be an int-amplified endomorphism of a projective variety $X$.
Assume $A\subseteq X$ is a closed subvariety with 
$$f^{-i}f^i(A) = A$$ 
for all $i\ge 0$.
Then $A$ is $f^{-1}$-periodic (i.e., $f^{-s}(A)=A$ for some $s>0$).
\end{lemma}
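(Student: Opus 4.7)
The plan is to extract periodicity from a Noetherian pigeonhole on the forward orbit of $A$, and to promote ``eventually periodic'' to ``periodic through $A$'' using the saturation hypothesis itself. The int-amplified assumption seems to enter only through the surjectivity and finiteness of $f$, both of which hold automatically.

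First I would set $B_i := f^i(A)$ and record two observations. (i) Each $B_i$ is an irreducible closed subvariety of $X$ of the same dimension as $A$, because $f$ is finite surjective. (ii) Each $B_i$ inherits the same saturation, namely $f^{-j} f^j(B_i) = B_i$ for all $j \ge 0$. For (ii), apply $f^i$ to the given identity $f^{-(i+j)} f^{i+j}(A) = A$ and use that $f^i \circ f^{-(i+j)} = f^{-j}$ on subsets, a consequence of the surjectivity of $f^i$.

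Next I would use the Noetherianity of $X$: the ascending chain of closed subsets $C_k := \bigcup_{i=0}^{k} B_i$ must stabilize at some $N$, so $B_{N+1} \subseteq \bigcup_{i=0}^{N} B_i$. Since $B_{N+1}$ is irreducible it lies in some $B_j$ with $j \le N$, and as $\dim B_{N+1} = \dim B_j = \dim A$ the containment is an equality, giving $f^{N+1}(A) = f^j(A)$. Applying $f^{-j}$ to both sides, using observation (ii) on the left (so $f^{-j} f^j \bigl(f^{N+1-j}(A)\bigr) = f^{N+1-j}(A)$) and the given saturation of $A$ on the right (so $f^{-j} f^j(A) = A$), yields $f^{N+1-j}(A) = A$. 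Setting $s := N + 1 - j \ge 1$ we obtain $f^s(A) = A$, and then $f^{-s}(A) = f^{-s}(f^s(A)) = A$ by the hypothesis at $i = s$.

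The main subtlety is the careful bookkeeping of the saturation property through the identities $f^i \circ f^{-j} = f^{i-j}$ on subsets of $X$. If ``closed subvariety'' is meant to allow reducible $A$, the same strategy goes through after passing to the self-map $\sigma$ induced by $f$ on the finite set of irreducible components of the stable union $T := \bigcup_i B_i$: the geometric saturation translates into the combinatorial identity $\sigma^{-i}(\sigma^i(\mathcal{A})) = \mathcal{A}$ on the set $\mathcal{A}$ of components of $A$, and a finite pigeonhole argument yields the same conclusion.
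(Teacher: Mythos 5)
There is a fatal gap at the Noetherian step, and it is exactly where the int-amplified hypothesis has to do its work. You assert that the ascending chain of closed subsets $C_k = \bigcup_{i=0}^{k} B_i$ must stabilize because $X$ is Noetherian; but Noetherianity of the Zariski topology gives the \emph{descending} chain condition on closed subsets (equivalently, the ascending chain condition on open subsets), not the ascending chain condition on closed subsets --- a strictly increasing union of finite point sets in $\mathbb{P}^1$ already defeats the claim. Your opening remark that the int-amplified assumption ``enters only through surjectivity and finiteness'' is the tell-tale sign: the statement is simply false for a general finite surjective endomorphism. Take $X = E \times \mathbb{P}^1$ with $E$ an elliptic curve, $f = t \times g$ where $t$ is translation by a non-torsion point $e_0 \in E$ and $g$ is a degree-$2$ endomorphism of $\mathbb{P}^1$, and $A = \{e\} \times \mathbb{P}^1$. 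Then $f^i(A) = \{e + i e_0\} \times \mathbb{P}^1$ and $f^{-i}f^i(A) = \{e\} \times g^{-i}(\mathbb{P}^1) = A$ for all $i \ge 0$, yet $f^{-s}(A) = \{e - s e_0\} \times \mathbb{P}^1 \neq A$ for every $s > 0$; here your chain $C_k$ strictly increases forever. (Even more simply, for any automorphism of infinite order the hypothesis $f^{-i}f^i(A) = A$ holds for \emph{every} closed subset, while non-periodic subvarieties abound --- but automorphisms are never int-amplified.)

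The proofs cited in the paper (\cite[Lemma 6.1]{MZ}, \cite[Lemma 8.1]{Meng}) use the polarized, resp.\ int-amplified, structure essentially at this very point: the hypothesis $f^{-i}f^i(A) = A$ forces the cycle $(f^i)^*[f^i(A)]$ to be supported exactly on $A$, and intersecting against powers of an ample divisor (whose pullback grows under $f$ by polarizedness/int-amplifiedness) yields degree bounds that pin down the forward orbit $\{f^i(A)\}_{i \ge 0}$ to a finite set. That finiteness is precisely what you tried to obtain for free from topology. The rest of your argument is correct and matches the standard conclusion: your observation (ii) is valid, and once one knows $f^{N+1}(A) = f^j(A)$ for some $j \le N$, applying $f^{-j}$ and invoking the saturation hypothesis does give $f^{s}(A) = A$ and then $f^{-s}(A) = A$ with $s = N+1-j$. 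So the single missing ingredient is the finiteness of the forward orbit, and it cannot be supplied without genuinely using the int-amplified assumption.
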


Applying the same argument and proofs of \cite[Lemma 6.2 to Lemma 6.6]{MZ}, we obtain the following theorem.
Note that Proposition \ref{lem-int-des1} and Theorem \ref{thm-num-lin} are needed to show the following $g$ is again polarized or int-amplified.

\begin{theorem}\label{thm-equi-mmp}(cf.~\cite[Theorem 8.2]{Meng}) Let $f:X\to X$ be a polarized (resp.~int-amplified) endomorphism of a $\Q$-factorial lc projective variety $X$.
Let $\pi:X\dasharrow Y$ be a dominant rational map which is 
\begin{itemize}
\item[(i)] 
either a divisorial contraction, or 
\item[(ii)]
a Fano contraction, or 
\item[(iii)]
a flipping contraction, or 
\item[(iv)]
a flip
\end{itemize}
induced by a $K_X$-negative extremal ray.
Then there exists a polarized (resp.~int-amplified) endomorphism $g:Y\to Y$ such that 
$$g\circ\pi=\pi\circ f$$ 
after replacing $f$ by a positive power.
\end{theorem}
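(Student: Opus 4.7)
The plan is to (a) show that after replacing $f$ by a positive power, the $K_X$-negative extremal ray $R$ underlying $\pi$ is preserved by $f_*$; (b) descend $f$ to $g : Y \to Y$ in each of the four cases; and (c) verify that $g$ inherits the polarized (resp.~int-amplified) property via the already established Proposition \ref{lem-int-des1} and Theorem \ref{thm-num-lin}. The key input for (a) is Lemma \ref{lem-fin-per}; steps (b) and (c) are more formal.

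For (a), I would first observe that $f_*$ preserves the Mori cone $\NE(X)$ (because $f$ is finite surjective) and, combined with the ramification divisor formula $K_X = f^*K_X + R_f$ with $R_f$ effective, that $f_*$ sends $K_X$-negative extremal rays to $K_X$-negative extremal rays. To show the orbit of $R$ under $f_*$ is finite, I would apply Lemma \ref{lem-fin-per} to the exceptional locus $A := \Exc(\pi)$ in cases (i)--(iii), or to the flipping locus in case (iv): the hypothesis $f^{-i}f^i(A) = A$ for all $i \geq 0$ is checked by noting that the images $f^i(A)$ form a decreasing sequence of closed subvarieties, which stabilizes by Noetherianity. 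Lemma \ref{lem-fin-per} then yields $f^{-s}(A) = A$ for some $s > 0$, forcing $f_*$-periodicity of $R$; we replace $f$ by $f^s$. For (b), in cases (i)--(iii) the invariance $f_*R = R$ implies $\pi \circ f$ contracts exactly the curves contracted by $\pi$, so by the rigidity of the extremal contraction, $\pi \circ f$ factors uniquely as $g \circ \pi$ for a finite surjective morphism $g : Y \to Y$. For the flip case (iv), applying case (iii) to the flipping contraction $p : X \to Z$ produces $h : Z \to Z$, and the uniqueness of the flip $X^+$ as the small modification of $Z$ with $K_{X^+}$ ample over $Z$ lifts $h$ canonically to the desired $g : X^+ \to X^+$. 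Finally, (c) is immediate from Proposition \ref{lem-int-des1} applied to the equivariant $\pi$, together with Theorem \ref{thm-num-lin} in the polarized case (to upgrade ``numerically polarized'' back to ``polarized'').

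I expect the main obstacle to lie in step (a): choosing the correct closed subvariety $A$ (especially in the flip case, where $\pi$ is only a rational map and one must work with a model resolving the indeterminacy), and verifying the hypothesis $f^{-i}f^i(A) = A$ for all $i \geq 0$ on the singular ($\Q$-factorial lc) variety $X$. Handling the ramification of $f$ along $A$ and ensuring that the periodicity of $A$ as a set really does translate into periodicity of the numerical class of the extremal ray $R$ are the subtleties to be navigated; once these are done, the remaining steps follow the standard equivariant-MMP pattern already established in the surface case of \cite{Zh-comp}.
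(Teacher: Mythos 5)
Your overall architecture --- periodicity of the contracted locus via Lemma \ref{lem-fin-per}, descent by rigidity of the contraction (and uniqueness of the flip), then Proposition \ref{lem-int-des1} together with Theorem \ref{thm-num-lin} to see that $g$ is again polarized resp.\ int-amplified --- is exactly the route the paper takes: its proof is essentially a pointer to \cite[Lemmas 6.2--6.6]{MZ} plus these same three ingredients. The problems are in your step (a). First, your verification of the hypothesis $f^{-i}f^i(A)=A$ is not correct: since $f$ is finite and surjective, the images $f^i(A)$ all have the same dimension as $A$ and do not form a decreasing chain, and in any case Noetherian stabilization of images is unrelated to the saturation condition $f^{-i}f^i(A)=A$. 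The actual argument (\cite[Lemma 6.2]{MZ}) takes $A=\Sigma_C$, the union of the curves whose classes lie in the extremal ray $R_C$, and uses that $f_*$ is a linear automorphism of $\N_1(X)$ preserving $\NE(X)$, hence carries extremal rays to extremal rays; one then checks $f^i(\Sigma_C)=\Sigma_{f^i(C)}$ and $f^{-i}(\Sigma_{f^i(C)})=\Sigma_C$, which is the needed hypothesis. (Your side claim that the ramification formula makes $f_*$ preserve $K_X$-negativity is also unjustified: $K_X\cdot f_*C=K_X\cdot C-R_f\cdot C$, and $R_f\cdot C$ can be negative when $C\subseteq\Supp R_f$; fortunately this is not needed for the argument.)

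Second, and more seriously, applying Lemma \ref{lem-fin-per} to $A=\Exc(\pi)$ cannot by itself ``force $f_*$-periodicity of $R$.'' In the Fano contraction case one has $\Exc(\pi)=\Sigma_C=X$, so the conclusion $f^{-s}(A)=A$ is vacuous and says nothing about the ray; and even in the divisorial case two distinct extremal rays may have the same associated locus (e.g.\ two rulings of an exceptional surface), so $f^s(\Sigma_C)=\Sigma_C$ does not immediately give $f^s_*R_C=R_C$, which is what the factorization $\pi\circ f^s=g\circ\pi$ actually requires. You flag this as a ``subtlety to be navigated,'' but it is precisely the nontrivial content of \cite[Lemmas 6.2--6.6]{MZ}: one must combine the set-theoretic periodicity with the contractibility data of Definition \ref{def:extrem_ray} (for instance the nef class $\pi^*(\mathrm{ample})$ whose kernel in $\NE(X)$ is exactly $R_C$) and the eigenvalue/norm control on $f^*|_{\N^1(X)}$ coming from Proposition \ref{prop-fx-x} or Proposition \ref{prop-3-equiv}. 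Once $f^s_*R_C=R_C$ is established, your steps (b) and (c) are correct and are the paper's.
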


Now assuming the existence of an MMP, we have the following result of equivariant MMP by successively applying Theorem \ref{thm-equi-mmp}.
We will also give a stronger version of the following theorem in the next section (cf. Theorem \ref{main-thm-finite-R}).

\begin{theorem}\label{thm-emmp}
Let $f:X\to X$ be a polarized (or int-amplified) endomorphism of a $\Q$-factorial lc projective variety $X$.
Then any finite sequence of MMP starting from $X$ is $f^s$-equivariant for some $s>0$.
\end{theorem}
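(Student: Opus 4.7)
The plan is a straightforward induction on the length $r$ of the MMP sequence
$X = X_1 \dasharrow X_2 \dasharrow \cdots \dasharrow X_r$,
using Theorem \ref{thm-equi-mmp} as the induction step. Setting $f_1 := f$, apply Theorem \ref{thm-equi-mmp} to the first map $\pi_1 : X_1 \dasharrow X_2$: after replacing $f_1$ by a positive power $f_1^{s_1}$, there is a polarized (resp.~int-amplified) endomorphism $f_2 : X_2 \to X_2$ with $f_2 \circ \pi_1 = \pi_1 \circ f_1^{s_1}$. Since divisorial contractions and flips preserve $\Q$-factoriality and lc singularities, the hypothesis of Theorem \ref{thm-equi-mmp} is again satisfied on $X_2$ for $f_2$, and one may apply it to $\pi_2 : X_2 \dasharrow X_3$. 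A Fano contraction is handled identically as long as its target is fed as the input to the next step in the chosen finite sequence.

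Iterating, at the $i$-th step one produces $f_{i+1}$ on $X_{i+1}$ after replacing $f_i$ by $f_i^{s_i}$ for some $s_i > 0$. The elementary observation that makes the exponents compatible across the full diagram is that commutativity of any square $f_{j+1} \circ \pi_j = \pi_j \circ f_j$ is preserved under raising both sides to the same power, i.e.\ $f_{j+1}^{t} \circ \pi_j = \pi_j \circ f_j^{t}$ for all $t \geq 1$. Hence at stage $i$, replacing $f_i$ by $f_i^{s_i}$ amounts to replacing $f = f_1$ by $f^{s_1 s_2 \cdots s_i}$, and the commutative diagram up through $X_{i+1}$ persists when every endomorphism on an earlier $X_j$ is simultaneously raised to the same accumulated power. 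After $r-1$ applications, set $s := s_1 s_2 \cdots s_{r-1}$; the full MMP sequence is then $f^s$-equivariant.

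The substantive content is concentrated entirely in Theorem \ref{thm-equi-mmp}, which rests on Lemma \ref{lem-fin-per} (to convert a merely $f$-stable extremal ray into an $f^s$-fixed one) and on Proposition \ref{lem-int-des1} together with Theorem \ref{thm-num-lin} (to ensure that the polarized or int-amplified property genuinely descends through each $\pi_i$). Beyond invoking these, the proof of Theorem \ref{thm-emmp} is pure bookkeeping of the accumulating exponent $s_1 s_2 \cdots s_{r-1}$. The only real obstacle, then, is logistical rather than conceptual: one must keep careful track of the power taken at each stage and confirm that each intermediate $X_i$ remains $\Q$-factorial and lc so that Theorem \ref{thm-equi-mmp} is applicable again, both of which are standard in MMP.
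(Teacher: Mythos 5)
Your proposal is correct and matches the paper's own argument: the paper obtains Theorem \ref{thm-emmp} precisely ``by successively applying Theorem \ref{thm-equi-mmp}'', which is exactly your induction on the length of the sequence with the accumulated exponent $s = s_1 s_2 \cdots s_{r-1}$. The bookkeeping you supply (compatibility of powers across the squares, and the preservation of the $\Q$-factorial lc hypothesis at each intermediate step) is the same routine verification the paper leaves implicit.
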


The existence of a polarized or int-amplified endomorphism on a variety
exerts strong constraints on the geometry of the variety.
Recall that a normal projective variety $X$ is said to be {\it $Q$-abelian} if there is a finite surjective morphism $\pi:A\to X$ \'etale in codimension $1$ with $A$ being an abelian variety.

\begin{theorem}\label{main-thm-qa}
(cf. \cite[Theorem 3.4]{Na-Zh}, \cite[Theorem 1.21]{GKP}, \cite[Lemma 6.9]{MZ}, \cite[Theorem 1.9]{Meng})
Let $f:X\to X$ be an int-amplified endomorphism of a normal projective variety $X$.
Suppose either $X$ is klt and $K_X$ is pseudo-effective or $X$ is non-uniruled.
Then $X$ is $Q$-abelian.
\end{theorem}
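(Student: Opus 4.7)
The plan is to unify both hypotheses by first showing that $K_X$ is (weakly) numerically trivial, and then invoking a Beauville--Bogomolov-type structure theorem for klt projective varieties with numerically trivial canonical class admitting an int-amplified endomorphism.

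For the reduction to $K_X \equiv 0$: in the klt case, $K_X$ is $\Q$-Cartier, so by Theorem \ref{main-thm-k} the class $-K_X$ is numerically equivalent to an effective $\Q$-Cartier divisor, hence lies in $\PE^1(X)$; as $K_X$ itself is in $\PE^1(X)$ and this cone contains no line, we obtain $K_X \equiv 0$ in $\N^1(X)$. In the non-uniruled case, first invoke Boucksom--Demailly--Paun--Peternell to place $K_X$ in $\PE_{n-1}(X)$ as a Weil class; Theorem \ref{main-thm-k} supplies an effective representative for $-K_X$ in $\N_{n-1}(X)$, and the same strictness of $\PE_{n-1}(X)$ now gives that $K_X$ is weakly numerically trivial.

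The next step is to reduce to a $\Q$-factorial klt model. In the klt hypothesis this is automatic (after a small $\Q$-factorialization). In the non-uniruled case, replace $f$ by a positive power and take an equivariant small $\Q$-factorialization $\tilde X \to X$ using Theorem \ref{thm-equi-mmp} together with the equivariant MMP supplied by Theorem \ref{thm-emmp}, so that $\tilde X$ is $\Q$-factorial and carries a compatible int-amplified endomorphism; then $\tilde X$ is $\Q$-Gorenstein, and Theorem \ref{main-thm-lc} gives that it is at worst lc. The non-klt locus $\Nklt(\tilde X)$ is preserved by $f$-equivariance, so Lemma \ref{lem-fin-per} makes it $f^{-1}$-periodic; combined with $K_{\tilde X} \equiv 0$ and the sharp positivity forced by Propositions \ref{prop-3-equiv} and \ref{prop-4-equiv} on any $f^{-1}$-periodic subvariety, one concludes that $\Nklt(\tilde X) = \emptyset$, i.e.~$\tilde X$ is klt.

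Finally, with $\tilde X$ klt, $\Q$-factorial, and $K_{\tilde X} \equiv 0$ carrying an int-amplified endomorphism, apply the singular Beauville--Bogomolov decomposition of Greb--Kebekus--Peternell: the Calabi--Yau and irreducible holomorphic symplectic factors in the decomposition cannot carry int-amplified endomorphisms (their quasi-\'etale self-covers are essentially rigid in view of Theorem \ref{lem-amp-kod} and the boundedness of their automorphism groups), so all factors are abelian and $\tilde X$ is $Q$-abelian; descending along $\tilde X \to X$ then shows $X$ itself is $Q$-abelian. The main obstacle will be the klt upgrade in Step 2: ruling out a nonempty $f^{-1}$-periodic non-klt locus on $\tilde X$ when only $K_{\tilde X} \equiv 0$ weakly is known requires delicately combining the asymptotic expansion properties of $f^\ast$ on $\N_{n-1}(\tilde X)$ with the structure of lc centres, and this is where the analysis of \cite[Lemma 6.9]{MZ} and the related arguments of \cite{Meng} carry the real weight of the proof.
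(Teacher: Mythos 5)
Your opening reduction is sound: combining Theorem \ref{main-thm-k} with pseudo-effectivity of $K_X$ and the fact that $\PE^1(X)$ (resp.\ $\PE_{n-1}(X)$) contains no line does force $K_X\equiv 0$ (resp.\ weak numerical triviality), and this is indeed how the cited proofs begin. The first genuine gap is your reduction of the non-uniruled case to the klt case. A small $\Q$-factorialization is only known to exist for klt (or suitably dlt/lc) varieties via \cite{BCHM}; in the non-uniruled case $X$ is not assumed to be $\Q$-Gorenstein, so Theorem \ref{main-thm-lc} does not apply and there is no starting model for any MMP. Moreover, Theorems \ref{thm-equi-mmp} and \ref{thm-emmp} \emph{descend} $f$ along contractions and flips from a $\Q$-factorial lc variety; they do not \emph{lift} $f$ to a $\Q$-factorialization of a variety with unknown singularities. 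You yourself flag the subsequent ``klt upgrade'' ($\Nklt(\tilde X)=\emptyset$) as unresolved, and that is exactly where this route collapses. In \cite[Theorem 3.4]{Na-Zh} and \cite[Theorem 1.9]{Meng} the non-uniruled case is not funneled through the klt case at all: one first deduces from weak numerical triviality of $K_X$ and the ramification divisor formula that $f$ is quasi-\'etale, and then analyses the tower of quasi-\'etale self-covers together with the Albanese map of a resolution to produce the abelian cover directly.

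The final step is also not a proof as written. Theorem \ref{lem-amp-kod} only gives $\kappa(X)\le 0$, which is vacuous for a $K$-trivial variety, and ``boundedness of automorphism groups'' is irrelevant since an int-amplified endomorphism is never an automorphism; so your stated reason for excluding the Calabi--Yau and symplectic factors establishes nothing, and in any case you would still need to lift (a power of) $f$ to the quasi-\'etale cover on which the decomposition lives and show it respects the factors. The argument that actually carries \cite[Lemma 6.9]{MZ} is much lighter and avoids the decomposition theorem entirely: once $f$ is quasi-\'etale, $f^*$ preserves the orbifold second Chern class $\hat c_2(X)$, and comparing $\hat c_2(X)\cdot H^{n-2}$ with $\deg(f)^{-1}\,\hat c_2(X)\cdot (f^*H)^{n-2}$, using that every eigenvalue of $f^*|_{\N^1(X)}$ has modulus greater than $1$ (Proposition \ref{prop-3-equiv}), forces $\hat c_2(X)\cdot H^{n-2}=0$; then \cite[Theorem 1.21]{GKP} --- the characterization of quotients of abelian varieties among klt varieties by $K_X\equiv 0$ and vanishing orbifold $c_2$ --- gives $Q$-abelianness in one stroke. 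I would redo Step 3 along these lines and treat the non-uniruled case by the quasi-\'etale tower/Albanese argument rather than by forcing it into the klt case.
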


We refer to \cite{KM}, \cite{BCHM} and \cite{Fu15} for details about an MMP.
Together with the previous theorem, one may characterize step by step the equivariant MMP and the action $f^*|_{\N^1(X)}$.
\begin{theorem}\label{main-thm-mmp}(cf.~\cite[Theorem 1.8]{MZ}, \cite[Theorem 1.10]{Meng})
Let $f:X\to X$ be an int-amplified endomorphism of a $\mathbb{Q}$-factorial klt projective variety $X$.
Then, replacing $f$ by a positive power, there exists a finite sequence $(*)$ of $f$-equivariant MMP as at the beginning of the section,
where we set $X_1 := X$, the $X_i \ratmap X_{i+1}$ is a divisorial contraction, flip, or Fano-contraction corresponding to a $K_{X_i}$-negative extremal ray, 
and $Y = X_r$ is a $Q$-abelian variety, such that the following assertions hold.
\begin{itemize}
\item[(1)]
If $K_X$ is pseudo-effective, then $X=Y$ and it is $Q$-abelian.
\item[(2)]
If $K_X$ is not pseudo-effective, then for each $i$, $X_i\to Y$ is an equi-dimensional well-defined morphism with every fibre irreducible (and rationally connected if the base field $k$ is further uncountable) and $f_i$ is int-amplified. The $X_{r-1}\to X_r = Y$ is a Fano contraction.
\item[(3)]
$f^*|_{\N^1(X)}$ is diagonalizable over $\mathbb{C}$ if and only if so  is $f_{r}^\ast|_{\N^1(Y)}$.
\item[(4)]
If $f$ is polarized, then $f^*|_{\N^1(X)}$ is a scalar multiplication:
$$f^*|_{\N^1(X)} = q \, \id,$$ if and only if so  is $f_{r}^\ast|_{\N^1(Y)}$.
\end{itemize}
\end{theorem}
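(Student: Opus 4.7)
The plan is to induct on $\dim X$: combine Theorem \ref{thm-equi-mmp} to construct the sequence one step at a time, Theorem \ref{main-thm-qa} to identify the endpoint as $Q$-abelian, and Lemma \ref{lem-fin-per} together with the special geometry of $Q$-abelian targets to control the composed rational maps to $Y$. First, if $K_X$ is pseudo-effective, then since $X$ is klt and admits an int-amplified endomorphism, Theorem \ref{main-thm-qa} immediately gives that $X$ itself is $Q$-abelian, so $r=1$, $Y=X$, and (1) is established.

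If $K_X$ is not pseudo-effective, I would run a $K_X$-MMP, which terminates by BCHM in characteristic zero, ending with a Mori fibre space $X_{s-1}\to Z$. Applying Theorem \ref{thm-equi-mmp} at each step shows that, after replacing $f$ by a sufficiently divisible power, the entire partial sequence is $f$-equivariant, each induced endomorphism $f_i$ is int-amplified (respectively polarized), and by Proposition \ref{lem-int-des1} so is the induced $f_Z$ on $Z$. Since $\dim Z<\dim X$ and $Z$ inherits $\Q$-factorial klt singularities from the Mori contraction, the inductive hypothesis yields a further equivariant sequence from $Z$ to a $Q$-abelian $Y$; concatenating (and absorbing one more power of $f$) produces the required sequence for $X$.

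The main obstacle is item (2). I would establish, by descending induction on $i$, that each $\pi_i\colon X_i\to Y$ is a well-defined equi-dimensional morphism with irreducible fibres. At each step $X_i\ratmap X_{i+1}$, the indeterminacy locus of $\pi_i$, the locus of reducible fibres, and the non-equi-dimensional locus are all $f_i$-invariant in the weak sense $f_i^{-j}f_i^{j}(\cdot)=(\cdot)$, hence $f_i^{-1}$-periodic by Lemma \ref{lem-fin-per}. Rigidity of rational maps into a $Q$-abelian variety (\'etale-in-codimension-one covered by an abelian variety, into which rational maps from a normal source have no positive-dimensional indeterminacy) combined with the Zariski density of $f_Y$-periodic points on $Y$ (Fakhruddin's theorem recalled in Section~\ref{sec-2}) forces these bad loci to be empty. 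For rational connectedness of the generic fibre over an uncountable $k$, one uses that Fano-contraction steps produce rationally connected fibres, a property preserved under the subsequent birational MMP operations.

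For (3) and (4), I would analyse $f^*|_{\N^1}$ one step at a time. A flip induces a canonical $f^*$-equivariant isomorphism on $\N^1$, so is harmless. A divisorial contraction with exceptional divisor $E$, or a Fano contraction with relative polarization $H$, identifies $\N^1(X_{i+1})$ with an $f_i^*$-invariant subspace of $\N^1(X_i)$ whose complement is spanned by $[E]$ (respectively by a class supported on a fibre of $\pi_i$). Subspaces and quotients of a diagonalizable action over $\C$ being again diagonalizable handles the ``only if'' direction of (3). For the converse, the eigenvalue of $f_i^*$ on the complementary line is computable in terms of the local degree of $f_i$ and is of modulus $>1$ by Proposition \ref{prop-3-equiv}, so one can build an invariant complement and hence a diagonal basis. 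For (4), the same computation yields that the complementary eigenvalue equals $q$ in the $q$-polarized case, and the scalar-multiplication property on $\N^1(X_{i+1})$ propagates back to $\N^1(X_i)$.
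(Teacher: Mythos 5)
Your overall architecture — induction on $\dim X$, step-by-step equivariance via Theorem~\ref{thm-equi-mmp}, descent of the int-amplified property via Proposition~\ref{lem-int-des1}, and identification of the endpoint as $Q$-abelian via Theorem~\ref{main-thm-qa} — is exactly the route the paper outlines (it defers the details to \cite[Theorem 1.8]{MZ} and \cite[Theorem 1.10]{Meng}). However, your treatment of item (2) has a genuine gap. You argue that the indeterminacy locus, the non-equi-dimensional locus and the reducible-fibre locus are $f^{-1}$-periodic by Lemma~\ref{lem-fin-per}, and then claim that $f^{-1}$-periodicity together with Zariski density of periodic points forces these loci to be empty. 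That inference is false: for the polarized map $[x_0:\cdots:x_n]\mapsto[x_0^2:\cdots:x_n^2]$ on $\PP^n$ the periodic points are dense, yet the coordinate hyperplane arrangement is a non-empty proper totally invariant closed subset. What the argument of \cite{MZ} and \cite{Meng} actually uses is a specific structural fact about the target: after passing to the quasi-\'etale Galois cover $A\to Y$ by an abelian variety and lifting the relevant data, an int-amplified endomorphism of an abelian variety admits \emph{no} non-empty proper totally invariant closed subset. For divisors this follows from $K_A\sim 0$, the \'etaleness of isogenies and Proposition~\ref{prop-4-equiv} (an effective eigenvector of eigenvalue $1$ for $f^*|_{\N_{n-1}(A)}$ cannot exist); the higher-codimension case requires a separate argument. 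This lemma, not density of periodic points, is the engine behind (2), and it is precisely the piece your proposal omits.

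A secondary issue concerns the ``if'' direction of (3) at a Fano contraction $\pi:X_i\to X_{i+1}$. The decomposition $\N^1(X_i)=\pi^*\N^1(X_{i+1})\oplus L$ with $L$ an $f_i^*$-invariant line is not automatic: $f_i^*$ a priori only preserves the subspace $\pi^*\N^1(X_{i+1})$ and acts by a scalar $\lambda$ on the one-dimensional quotient, and if $\lambda$ coincides with an eigenvalue of $f_{i+1}^*|_{\N^1(X_{i+1})}$ pure linear algebra does not produce an invariant complement (a Jordan block is not excluded by having a diagonalizable restriction and quotient). The original proof supplies the missing eigenvector by a Perron--Frobenius type argument on the $f_i^*$-invariant cone $\Nef(X_i)$ (in the spirit of Proposition~\ref{prop-fx-x}), which you would need to invoke here. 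For a divisorial contraction your argument is fine, since the exceptional divisor $E$ becomes totally invariant after replacing $f$ by a power and so $\R[E]$ is an honest invariant complement.
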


\begin{definition}\label{def-q} Let $X$ be a normal projective variety.
\begin{itemize}
\item[(1)]
$q(X):=h^1(X,\mathcal{O}_X)=\dim H^1(X,\mathcal{O}_X)$ (the irregularity).
\item[(2)]
$\tilde{q}(X):=q(\tilde{X})$ with $\tilde{X}$ a smooth projective model of $X$.
\item[(3)]
$q^\natural(X):=\sup\{\tilde{q}(X')\,|\,X'\to X \text{ is finite surjective and \'etale in codimension one}\}$.
\item[(4)]
$\pi_1^{\alg}(X_{\reg})$ is the algebraic fundamental group of the smooth locus $X_{\reg}$ of $X$.
\end{itemize}
\end{definition}

The following result is an application of Theorem \ref{main-thm-mmp}. Note that when $f:X\to X$ is a polarized endomorphism of a projective variety $X$, the action $f^*|_{\N^1_{\C}(X)}$ is always diagonalizable and all the eigenvalues are of the same modulus (cf.~\cite[Proposition 2.9]{MZ}).
However, without the extra assumptions on $X$ as stated, Theorem \ref{main-thm-rc-diag} (2) fails, for a general int-amplified endomorphism;
see Example \ref{exa-fak} given by Najmuddin Fakhruddin.

\begin{theorem}\label{main-thm-rc-diag}(cf.~\cite[Lemma 9.1, Theorem 1.10]{MZ}, \cite[Theorem 1.11]{Meng}) Let $f:X\to X$ be an int-amplified endomorphism of a $\Q$-factorial klt projective variety $X$.
Suppose either $q^\natural(X)=0$ or $\pi_1^{\alg}(X_{\reg})$ is finite (e.g., $X$ is smooth and rationally connected).
Then we have:
\begin{itemize}
\item[(1)] There exists a finite sequence of MMP which ends up with a point.
\item[(2)] There exists some $s>0$, such that $(f^s)^*|_{\N^1(X)}$ is diagonalizable over $\Q$ with all the eigenvalues being positive integers greater than $1$.
\item[(3)] If $f$ is further $q$-polarized, then $(f^s)^*|_{\N^1(X)}=q^s \, \id$.
\end{itemize}
\end{theorem}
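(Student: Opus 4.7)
The plan is to deduce all three parts from Theorem \ref{main-thm-mmp}, and the entire argument hinges on showing that the terminal $Q$-abelian variety $Y = X_r$ produced by the equivariant MMP must be a single point. After replacing $f$ by a positive power, Theorem \ref{main-thm-mmp} supplies an $f$-equivariant MMP
$$X = X_1 \ratmap X_2 \ratmap \cdots \ratmap X_r = Y,$$
with each step a divisorial contraction, flip, or Fano contraction of a $K_{X_i}$-negative extremal ray, $Y$ a $Q$-abelian variety, and every intermediate $f_i$ again int-amplified by Theorem \ref{thm-equi-mmp}.

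To force $\dim Y = 0$, I would propagate the topological hypothesis step by step down this MMP. Both $q^\natural$ and $\pi_1^{\alg}$ of the smooth locus behave well: flips are isomorphisms in codimension two and leave them unchanged; divisorial contractions $X_i \to X_{i+1}$ are birational, so smooth birational models share $\tilde{q}$ and quasi-étale covers of $X_{i+1}$ lift to quasi-étale covers of $X_i$; and Fano contractions have rationally connected general fibres by Theorem \ref{main-thm-mmp}(2) (spreading out if $k$ is countable), so the Albanese of a smooth model and the algebraic fundamental group of the smooth locus factor through the base. Consequently either $q^\natural(Y) = 0$ or $\pi_1^{\alg}(Y_{\reg})$ is finite. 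But the $Q$-abelian $Y$ is quasi-étale covered by an abelian variety $A$ of dimension $\dim Y$, which forces $q^\natural(Y) \geq \tilde{q}(A) = \dim Y$ and, using purity of branch locus on $A$, gives a finite-index subgroup of $\pi_1^{\alg}(Y_{\reg})$ isomorphic to $\hat{\Z}^{2\dim Y}$ whenever $\dim Y > 0$. Either hypothesis thus forces $\dim Y = 0$; this is assertion (1).

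With $Y$ a point, part (2) is proved by induction along the MMP. Theorem \ref{main-thm-mmp}(3) already yields diagonalizability of $\varphi := f^\ast|_{\N^1(X)}$ over $\C$. Each divisorial contraction contributes a one-dimensional $\varphi$-stable summand spanned by the class of the contracted divisor, on which $\varphi$ acts by a positive rational scalar (after a further power of $f$ permutes the finitely many contractible divisors individually); flips leave $\N^1$ unchanged; and the terminal $X_{r-1}$ is a Fano of Picard rank one, on which $\varphi$ again acts by a positive rational scalar. Since $\varphi$ preserves the Néron--Severi lattice modulo torsion, each rational scalar is an algebraic integer, hence an ordinary integer, and each exceeds $1$ by Proposition \ref{prop-3-equiv}. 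For (3), Theorem \ref{main-thm-mmp}(4) applies: the criterion $f_r^\ast|_{\N^1(Y)} = q\,\id$ is vacuous when $Y$ is a point, so after a further power $f^\ast|_{\N^1(X)} = q\,\id$, whence $(f^s)^\ast|_{\N^1(X)} = q^s\,\id$ for an appropriate $s$.

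The principal obstacle is the step-by-step propagation of $q^\natural$ and of $\pi_1^{\alg}(X_{\reg})$ through the MMP. Flips are cosmetic, but divisorial contractions require care in tracking quasi-étale covers across the exceptional divisor (where ramification can appear anew), and the Fano-contraction step leans on the rational connectedness of general fibres, requiring either an uncountable base field or a spreading-out argument. Once those descents are secured, the reduction to $Y$ being a point is clean, and the remaining claims are linear-algebraic bookkeeping along the MMP filtration combined with parts (3) and (4) of Theorem \ref{main-thm-mmp}.
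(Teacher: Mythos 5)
Your overall architecture is the right one and matches the intended proof: run the equivariant MMP of Theorem \ref{main-thm-mmp} to reach a $Q$-abelian $Y$, show $\dim Y=0$ by playing the quasi-\'etale abelian cover $A\to Y$ (and its multiplication-by-$m$ tower) against the hypothesis on $q^\natural$ or $\pi_1^{\alg}$, and then read off (2) and (3) from the filtration of $\N^1$ along the MMP together with parts (3) and (4) of Theorem \ref{main-thm-mmp}. The endgame ($\dim Y>0$ forces $q^\natural\ge \dim Y$ and an infinite $\pi_1^{\alg}$ of the smooth locus) and the linear-algebra bookkeeping for (2)--(3) are fine.

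The genuine gap is the step you yourself flag as the ``principal obstacle'': the claim that for a divisorial contraction $X_i\to X_{i+1}$ ``quasi-\'etale covers of $X_{i+1}$ lift to quasi-\'etale covers of $X_i$'' is false in general, and the propagation of the hypothesis actually runs in the wrong direction. The local model is $\C^2\to \C^2/\{\pm 1\}$: this is quasi-\'etale (branched only at the origin, codimension $2$), but its pullback to the minimal resolution is ramified along the whole $(-2)$-curve, hence not quasi-\'etale. More structurally, if $E=\Exc(\pi)$ then $(X_{i+1})_{\reg}\setminus \pi(E)$ is the complement of a codimension-$\ge 2$ set, giving $\pi_1((X_{i+1})_{\reg})\cong \pi_1(\pi^{-1}(U))\twoheadrightarrow \pi_1((X_i)_{\reg})$ --- a surjection from the \emph{target's} group onto the \emph{source's}, so finiteness on $X_i$ gives no control on $X_{i+1}$; likewise $q^\natural$ can only be bounded from the target side. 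So your step-by-step descent through the divisorial contractions does not close. The paper's proof avoids this entirely: since the hypothesis rules out the case that $K_X$ is pseudo-effective (else $X=Y$ is $Q$-abelian of positive dimension, an immediate contradiction), Theorem \ref{main-thm-mmp}(2) provides a single \emph{equi-dimensional surjective morphism} $X=X_1\to Y$, and one pulls the quasi-\'etale cover $A\to Y$ back along this morphism in one shot; equi-dimensionality guarantees the branch locus stays in codimension $\ge 2$ in $X$, so $q^\natural(X)\ge \tilde q(A)=\dim Y$ and the tower $A\xrightarrow{[m]}A\to Y$ gives arbitrarily large quasi-\'etale covers of $X$, forcing $\dim Y=0$ under either hypothesis. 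Replacing your stepwise propagation by this one global pullback repairs the argument without changing anything else.
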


\begin{example} {\rm (Fakhruddin)} \label{exa-fak}
\rm{} Let $E$ be an elliptic curve admitting a complex multiplication.
Let
$$S=E\times E .$$
Then $\dim (\N^1(S))=4$.
Let $\sigma:S\to S$ be an automorphism given as:
$$(x,y)\mapsto (x,x+y) .$$
Then $\sigma^*|_{\N^1(S)}$ is not diagonalizable over $\mathbb{C}$.
Let $m_S$ be the multiplication endomorpphism of $S$.
Note that
$$m_S^*|_{\N^1(S)}=m^2\id_{\N^1(S)} .$$
So $f:=\sigma\circ m_S$ is int-amplified for $m > 1$, by Proposition \ref{prop-3-equiv}.
Clearly,
$$f^*|_{\N^1(S)}$$ is not diagonalizable over $\C$.
\end{example}

\section{Equivariant MMP with respect to the monoid $\SEnd(X)$}\label{sec-5}

We use Proposition \ref{prop-finiteclosed} below in proving the results in the rest of this section.
As kindly informed by Professors Dinh and Sibony, this kind of result (with a complete proof) first appeared
in \cite[Section 3.4]{DS03}; \cite[Theorem 3.2]{Dinh09} is a more general form including Proposition \ref{prop-finiteclosed} below, requiring a weaker condition and dealing with also dominant meromorphic self-maps of K\"ahler manifolds;
see comments in \cite[page 615]{DS10} for the history of these results; see also \cite{BD}.

\begin{proposition}\label{prop-finiteclosed}(cf.~\cite[Proposition 3.6]{MZ_PG})
Let $f:X\to X$  be an int-amplified endomorphism of a projective variety $X$.
Then there are only finitely many $f^{-1}$-periodic Zariski closed subsets.
\end{proposition}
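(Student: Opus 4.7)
My plan is to combine the spectral properties of $f^*$ from Propositions \ref{prop-3-equiv} and \ref{prop-4-equiv} with the boundedness of Chow varieties, and to conclude via Noetherian induction on $\dim X$.

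I first reduce to the case of irreducible periodic subvarieties: any $f^{-1}$-periodic closed subset $A$ with $f^{-s}(A) = A$ has its finitely many irreducible components permuted by $f^{-s}$, so each component is itself $f^{-1}$-periodic; conversely, every such $A$ is a finite union of elements of the subcollection $\Sigma_{\mathrm{irr}}$ of irreducible periodic subvarieties. Stratifying by dimension, it then suffices to show that $\Sigma_{\mathrm{irr}}^r := \{A \in \Sigma_{\mathrm{irr}} : \dim A = r\}$ is finite for every $r$.

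For $A \in \Sigma_{\mathrm{irr}}^r$ with $f^{-s}(A) = A$, the pullback cycle $f^{*s}[A]$ is supported on $A$, hence $f^{*s}[A] = m_A [A]$ in $\N_r(X)$ with $m_A = \deg(f^s)/\deg(f^s|_A) \in \Z_{>0}$. Since $f$ is int-amplified, every eigenvalue of $f^*|_{\N_r(X)}$ has modulus $>1$ (by Propositions \ref{prop-3-equiv} and \ref{prop-4-equiv}, together with a duality/product argument via ample classes for intermediate $r$), so $m_A \geq \lambda_0^s$ with $\lambda_0 > 1$ the minimum such modulus. Choosing an ample $L$ such that $f^*L - L$ is ample, the projection-formula identity $(f^{*s}L)^r \cdot A = \deg(f^s|_A)\cdot L^r \cdot A$, together with the expression $f^{*s}L - L = \sum_{i=0}^{s-1} f^{*i}(f^*L - L)$ as an ample divisor and a Jordan-block analysis of $f^{*s}|_{\N^1(X)}$, yields a uniform bound $L^r\cdot A \leq C_r$ independent of $A$. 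Hence $\Sigma_{\mathrm{irr}}^r$ lies in a finite-type subscheme of $\Chow_r(X)$.

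The hard part is to upgrade this bounded family to actual finiteness, for which I plan Noetherian induction on $\dim X$; the base $\dim X = 0$ is trivial. For the inductive step, suppose $\Sigma_{\mathrm{irr}}^r$ with some $r < n$ were infinite, and set $V := \overline{\bigcup_{A \in \Sigma_{\mathrm{irr}}^r} A}$. After passing to a suitable iterate of $f$, one shows that $V$ is a proper closed $f^{-1}$-invariant subset of $X$; this step is delicate and uses $m_A \geq 2$ together with the ramification divisor formula to locate each $A$ inside the ramification locus of some iterate. For each dominant irreducible component $V_0$ of $V$, its normalisation $\widetilde{V}_0$ carries, by equivariant descent (Proposition \ref{lem-int-des1}) on the composite $\widetilde{V}_0 \to V_0 \hookrightarrow X$, an induced int-amplified endomorphism $\widetilde{f}_0$. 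The elements of $\Sigma_{\mathrm{irr}}^r$ lying in $V_0$ then lift to $\widetilde{f}_0^{-1}$-periodic subvarieties of $\widetilde{V}_0$; by the inductive hypothesis these are finite in number, contradicting the assumed infinitude. The case $r = n$ forces $A = X$. The main obstacles are verifying (a) that $V \neq X$ (i.e., $f^{-1}$-periodic proper subvarieties cannot be Zariski dense in $X$), and (b) that the int-amplified property survives the restriction and normalisation step $\widetilde{V}_0 \to V_0$; both points require a careful exploitation of the spectral and ramification structure of $f$.
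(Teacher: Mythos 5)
There is a genuine gap, in fact two, and they sit exactly at the load-bearing points of your outline. First, the claimed uniform bound $L^r\cdot A\le C_r$ does not follow from the computation you sketch: the projection formula gives $((f^s)^*L)^r\cdot A=\deg(f^s|_A)\,L^r\cdot A$, and writing $(f^s)^*L=L+(\text{ample})$ then yields only the \emph{lower} bound $\deg(f^s|_A)\ge 2$; it produces no upper bound on $L^r\cdot A$. Nor can one come from the eigenvector property alone: $[A]$ being an eigenvector of $(f^s)^*|_{\N_r(X)}$ places no constraint on its size (when $f^*|_{\N_r(X)}$ is a scalar, \emph{every} effective class is an eigenvector). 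Second, and more seriously, the step ``$V:=\overline{\bigcup A}\ne X$'' is not established by your proposed mechanism. What is true is that $m_A>1$ forces the generic point of $A$ to lie in the non-\'etale locus of $f^{s_A}$, which is $\bigcup_{i<s_A}f^{-i}(Z)$ with $Z$ the non-\'etale locus of $f$; but as $s_A$ ranges over all periods this is the countable union $\bigcup_{i\ge 0}f^{-i}(Z)$ of proper closed subsets, which can perfectly well be Zariski dense. Ruling out a dense union of totally periodic subvarieties is precisely the content of the proposition, so your Noetherian induction never reaches its contradiction. (In codimension one this can be salvaged quantitatively: multiplicativity of ramification indices along the orbit gives $\prod_{j<s}e(f,f^j(A))=m_A\ge\lambda_0^s$ with $\lambda_0>1$, forcing a positive proportion of the orbit to consist of components of $\Supp(R_f)$ and hence bounding the period; in codimension $\ge 2$ the iterates $f^j(A)$ need only be \emph{contained in} $Z$ rather than be components of it, and the argument collapses.) A smaller remark: your descent to the normalisation $\widetilde V_0$ is unnecessary --- if $f(V_0)=V_0$ then $f|_{V_0}$ is int-amplified simply because $(f^*L-L)|_{V_0}$ is ample.

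For comparison: the paper gives no proof of this proposition. It quotes \cite[Proposition 3.6]{MZ_PG} and explicitly attributes the result (in greater generality) to Dinh--Sibony \cite{DS03} and Dinh \cite{Dinh09}, whose argument is built on the theory of analytic multiplicative cocycles: one studies the asymptotic growth of local multiplicities $x\mapsto\limsup_n\mu(f^n,x)^{1/n}$, proves that its upper level sets are proper analytic subsets with finitely many jumps, and shows every totally invariant subvariety is a component of one of them. That machinery is exactly what replaces your unproved steps (a) and (b); a purely numerical argument of the kind you outline is not known to suffice beyond the divisorial case.
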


Let $X$ be a projective variety and let $C$ be a curve.
Denote by 
$$R_C:=\mathbb{R}_{\ge 0}[C]$$ 
the ray generated by $[C]$ in $\NE(X)$.
Denote by 
$$\Sigma_C$$ 
the union of curves whose classes are in $R_C$.

\begin{definition}\label{def:extrem_ray} Let $X$ be a projective variety.
Let $C$ be a curve such that $R_C$ is an extremal ray in $\NE(X)$.
We say $C$ or $R_C$ is {\it contractible} if there is a surjective morphism
$\pi : X \to Y$ to a projective variety $Y$ such that
the following hold.

\begin{itemize}
\item[(1)] $\pi_*\mathcal{O}_X=\mathcal{O}_Y$.
\item[(2)] Let $C'$ be a curve in $X$.
Then $\pi(C')$ is a point if and only if $[C'] \in R_C$.
\item[(3)] Let $D$ be a $\Q$-Cartier divisor of $X$.
Then $D\cdot C=0$ if and only if $D\equiv \pi^*D_Y$ (numerical equivalence) for some $\Q$-Cartier divisor $D_Y$ of $Y$.
\end{itemize}
\end{definition}

A submonoid $G$ of a monoid $\Gamma$ is said to be of {\it finite-index} in $\Gamma$
if there is a chain 
$$G = G_0 \le G_1 \le \cdots \le G_r = \Gamma$$ 
of submonoids and homomorphisms $\rho_i : G_i \to F_i$ such that $\Ker(\rho_i) = G_{i-1}$ and all $F_i$ are finite
{\it groups}.

Denote by 
$$\SEnd(X)$$ 
the monoid of all surjective endomorphisms of $X$.

\begin{theorem}\label{main-thm-finite-R} (cf.~\cite[Theorem 1.1]{MZ_PG})
Let $X$ be a (not necessarily normal or $\Q$-Gorenstein) projective variety with a polarized (or int-amplified) endomorphism $f$. Then:
\begin{itemize}
\item[(1)]
$X$ has only finitely many (not necessarily $K_X$-negative) contractible extremal rays in the sense of Definition \ref{def:extrem_ray}.
\item[(2)]
Suppose that $X$ is $\Q$-factorial and normal. Then any finite sequence of MMP starting from $X$ is $G$-equivariant for some finite-index submonoid $G$ of $\SEnd(X)$.
\end{itemize}
\end{theorem}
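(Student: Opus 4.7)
\noindent\emph{Proof plan.} We aim to prove (1) first and then bootstrap to (2).

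For (1), the plan is to combine the spectral rigidity of $f$ with Proposition \ref{prop-finiteclosed}. Since $f$ is int-amplified, Proposition \ref{prop-3-equiv} gives that every eigenvalue of $\varphi := f^{*}|_{\N^1(X)}$ has modulus $>1$; dually, $f_*$ acts on the strictly convex cone $\NE(X) \subset \N_1(X)$ and permutes its extremal rays. Applying the universal property of Definition \ref{def:extrem_ray} to the Stein factorisation of $\pi_C \circ f$, one checks that contractible rays are sent to contractible rays. To each contractible ray $R_C$ I would attach the closed locus $\Sigma_C$: in the divisorial and flipping cases $\Sigma_C = \Exc(\pi_C)$ is a proper closed subset and the assignment $R_C \mapsto \Sigma_C$ is injective. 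These loci are permuted by $f$, so after passing to a suitable iterate they become $f^{-1}$-periodic, and Proposition \ref{prop-finiteclosed} yields the desired finiteness for divisorial and flipping contractions. For Fano contractions $\Sigma_C = X$, so instead one dualises and encodes $R_C$ by the codimension-one face $R_C^{\perp} \cap \Nef(X)$, bounding the number of such faces via the spectral gap of $\varphi$.

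For (2), fix a finite MMP sequence $X = X_1 \dasharrow X_2 \dasharrow \cdots \dasharrow X_r$, and let $R^{(i)}$ be the extremal ray contracted at step $i$. I would build $G$ inductively. Set $G_0 := \SEnd(X)$; part (1) applied to $X$ gives that the set $\mathcal{R}_1$ of contractible extremal rays is finite, so the stabiliser $G_1$ of $R^{(1)}$ under the $G_0$-action on $\mathcal{R}_1$ is a finite-index submonoid. By the proof of Theorem \ref{thm-equi-mmp}, preservation of $R^{(1)}$ is precisely the condition that lets $g \in G_1$ descend to $g^{(2)} \in \SEnd(X_2)$ without needing a further power, yielding a monoid homomorphism $G_1 \to \SEnd(X_2)$. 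Theorem \ref{thm-equi-mmp} also guarantees that $f^{(2)}$ is int-amplified, so (1) applies to $X_2$ and $\mathcal{R}_2$ is finite. Pulling back the stabiliser of $R^{(2)}$ along this monoid map yields a finite-index submonoid $G_2 \le G_1$. Iterating produces $G := G_r \le \SEnd(X)$ of finite index, and by construction every $g \in G$ is equivariant for the entire MMP.

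The main obstacle is the Fano-contraction case in (1): since $\Sigma_C = X$, Proposition \ref{prop-finiteclosed} does not apply to $\Sigma_C$ directly. The workaround is to move to the dual picture, parameterising each contractible ray by its codimension-one face of $\Nef(X)$ and exploiting the diagonalisability of $\varphi$ (automatic in the polarized case by Proposition \ref{prop-fx-x}(1), and available after a power in the int-amplified case) together with the spectral gap $|\lambda|>1$ to show that every $\varphi$-periodic face is $\varphi^s$-fixed for some uniform $s$; the finitely many such fixed faces then account for all Fano-type contractible rays, completing (1).
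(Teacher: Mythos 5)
Your plan for part (2), and the reduction in part (1) to finitely many $f^{-1}$-periodic exceptional loci via Proposition \ref{prop-finiteclosed}, track the paper's outline; but two steps in part (1) have genuine gaps. First, the assignment $R_C\mapsto\Sigma_C$ is \emph{not} injective for divisorial or flipping contractions: two distinct contractible extremal rays can share the same exceptional locus. The standard example is an exceptional divisor $E\simeq\PP^1\times\PP^1$ whose two rulings span two distinct contractible rays, each with $\Sigma=E$; such configurations occur on projective toric threefolds, which carry polarized endomorphisms, so they lie squarely inside the hypotheses of the theorem. This is exactly why the paper's sketch lists, as a separate final step, that ``there are only finitely many contractible extremal rays sharing the same exceptional locus''; that step is nontrivial and cannot be replaced by injectivity. (Relatedly, ``these loci are permuted by $f$, so after passing to a suitable iterate they become $f^{-1}$-periodic'' presupposes the orbit is finite, which is part of what is being proved; the intended route is to verify $f^{-i}f^i(\Sigma_C)=\Sigma_C$ and invoke Lemma \ref{lem-fin-per}, which your write-up never uses.)

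Second, and more seriously, the workaround for Fano contractions via the spectral gap of $\varphi=f^*|_{\N^1(X)}$ cannot work. When $f$ is polarized one may well have $\varphi=q\,\id$ (this is the typical case, cf.\ Theorem \ref{main-thm-rc-diag}(3)), and then \emph{every} ray of $\N^1(X)$ and every face of $\Nef(X)$ is $\varphi$-fixed; diagonalisability and $|\lambda|>1$ therefore give no bound at all on the number of $\varphi^s$-fixed codimension-one faces, and $\Nef(X)$ may have infinitely many of them (it can even be a round cone, as for abelian surfaces with the multiplication map). More generally any eigenvalue of multiplicity $\ge 2$ yields a positive-dimensional family of invariant rays. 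The finiteness of Fano-type contractible rays is again an instance of ``finitely many rays with the same exceptional locus'' (here $\Sigma_C=X$), and in \cite{MZ_PG} it is handled by a geometric argument comparing the fibres of the different fibrations, not by linear algebra on $\N^1(X)$. Until that step is supplied, part (1) --- and hence the base of your induction for part (2), which is otherwise in the right spirit --- is incomplete.
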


We give the main idea of the proof of Theorem \ref{main-thm-finite-R}.
First, we make use of Lemma \ref{lem-fin-per} to show that the exceptional locus of each contraction is $f^{-1}$-periodic.
By Proposition \ref{prop-finiteclosed}, there are only finitely many $f^{-1}$-periodic closed subvarieties.
Finally, we show that there are only finitely many contractible extremal rays sharing the same exceptional locus.
 
\begin{theorem}\label{main-thm-GMMP}(cf.~\cite[Theorem 1.2]{MZ_PG})
Let $f:X\to X$ be an int-amplified endomorphism of a $\mathbb{Q}$-factorial klt projective variety $X$.
Then there exist a finite-index submonoid $G$ of $\SEnd(X)$,
a $Q$-abelian variety $Y$, and a $G$-equivariant relative MMP over $Y$
$$X=X_0 \dashrightarrow \cdots \dashrightarrow X_i \dashrightarrow \cdots \dashrightarrow X_r=Y$$
(i.e. $g \in G = G_0$ descends to $g_i \in G_i$ on each $X_i$), such that the following assertions hold.
\begin{itemize}
\item[(1)]
There is a finite quasi-\'etale (i.e., \'etale in 
codimension $1$) Galois cover $A \to Y$ from an abelian variety $A$
such that $G_Y := G_r$ lifts to a submonoid $G_A$ of $\SEnd(A)$.
\item[(2)]
If $g$ in $G$ is amplified and
its descending $g_i$ on $X_i$ is int-amplified for some $i$, then $g$ is int-amplified.
The $X_{r-1}\to X_r = Y$ is a Fano contraction.
\item[(3)]
For any subset $H \subseteq G$ and its descending 
$$H_Y \subseteq \SEnd(Y)$$
$H$ acts via pullback on $\NS_{\Q}(X)$ or $\NS_{\mathbb{C}}(X)$ as commutative diagonal matrices with respect
to a suitable basis if and only if so does $H_Y$.
\end{itemize}
\end{theorem}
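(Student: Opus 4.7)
The plan is to concatenate the single-endomorphism equivariant MMP from Theorem \ref{main-thm-mmp} with the monoid-level finiteness of Theorem \ref{main-thm-finite-R}(2), and then propagate the structural statements along the resulting tower. First, I would apply Theorem \ref{main-thm-mmp} to $(X,f)$ to fix, after replacing $f$ by a suitable power, a $K_X$-negative MMP sequence
$$X = X_0 \dashrightarrow X_1 \dashrightarrow \cdots \dashrightarrow X_r = Y$$
that is $f$-equivariant, with each $X_i \dashrightarrow X_{i+1}$ a divisorial contraction, flip, or Fano contraction, $Y$ a $Q$-abelian variety, and $X_{r-1} \to Y$ a Fano contraction (the pseudo-effective $K_X$ case being trivial with $X=Y$). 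Applying Theorem \ref{main-thm-finite-R}(2) to this specific sequence produces a finite-index submonoid $G \le \SEnd(X)$ rendering the whole sequence $G$-equivariant, with descending submonoids $G_i \le \SEnd(X_i)$ and $G_Y := G_r$.

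For assertion (1), I would fix a finite quasi-\'etale cover $\pi : A \to Y$ from an abelian variety and, after replacing it by its Galois closure, assume $\pi$ is Galois with group $\Gamma$. For each $g_Y \in G_Y$, the fibered product of $g_Y \circ \pi$ with $\pi$ is quasi-\'etale over $A$, and its components have abelian normalization surjecting onto $A$ by a Beauville--Bogomolov--type argument, yielding individual lifts $g_A \in \SEnd(A)$. Multiplicativity is forced only up to a choice in the finite group $\Gamma$, so restricting to the finite-index submonoid of $G$ whose descending acts trivially on the auxiliary $\Gamma$-torsor of lift-choices produces a coherent $G_A \le \SEnd(A)$; this is of finite index in the stratified sense of the definition preceding Theorem \ref{main-thm-finite-R} because $\Gamma$ is finite.

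For assertion (2), if $g \in G$ is amplified and $g_i$ is int-amplified on some $X_i$, Proposition \ref{prop-3-equiv} gives that every eigenvalue of $g_i^\ast|_{\N^1(X_i)}$ has modulus $>1$. At each MMP step $X_j \dashrightarrow X_{j+1}$ the space $\N^1(X_j)$ decomposes $g_j$-equivariantly as $\pi_j^\ast \N^1(X_{j+1})$ plus a one-dimensional invariant subspace (spanned by the contracted divisor, or by a relatively-ample class in the Fano case) for the birational and Fano types, and is canonically preserved under flips. The amplified hypothesis, via Proposition \ref{prop-3-equiv}(4) applied to the ample cone, forces the eigenvalue on each one-dimensional complement to exceed $1$ in modulus; propagating this back from $X_i$ to $X$ makes every eigenvalue of $g^\ast|_{\N^1(X)}$ of modulus $>1$, so $g$ is int-amplified by Proposition \ref{prop-3-equiv} again. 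The Fano contraction at the end is immediate from Theorem \ref{main-thm-mmp}(2). Assertion (3) follows by the same decomposition argument applied simultaneously to a subset $H \subseteq G$: each one-dimensional complement supports an $H$-eigenvalue action, and simultaneous diagonalizability of $H_Y$ on $\N^1(Y)$ extends canonically upward to a simultaneous diagonalization of $H$ on $\N^1(X)$ in the adapted basis; the converse is a trivial restriction.

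The hardest step should be assertion (1): arranging the lifts $g_A$ to be multiplicatively coherent and verifying that the resulting submonoid remains of finite index in the stratified sense. Beyond this Galois ambiguity, the proof requires coordinating the several finite-index shrinkings used in Theorems \ref{main-thm-mmp} and \ref{main-thm-finite-R} (for $G$-equivariance of the MMP, for eigenvalue compatibility in (2), and for diagonal-basis compatibility in (3)) into a single submonoid of $\SEnd(X)$; this combinatorial bookkeeping is the most delicate part of the argument.
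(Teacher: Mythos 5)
Your overall strategy---running the $f$-equivariant MMP of Theorem \ref{main-thm-mmp} to reach the $Q$-abelian base $Y$ and then invoking Theorem \ref{main-thm-finite-R}(2) to upgrade equivariance from the single $f$ to a finite-index submonoid $G$ of $\SEnd(X)$---is indeed the intended route, and your treatment of (3) is in the right spirit. Two remarks on (1): the fibre-product construction a priori lifts $g_Y$ only to an endomorphism of some \'etale cover $W\to A$ (an isogenous abelian variety), so one must choose $A\to Y$ with the appropriate maximality property, as in \cite[Theorem 3.4]{Na-Zh} and \cite[Theorem 1.21]{GKP}, before the lift actually lands in $\SEnd(A)$; and no multiplicatively coherent section is needed, since the set of \emph{all} lifts of all elements of $G_Y$ is automatically a submonoid of $\SEnd(A)$, so the finite-index shrinking you propose there is superfluous.

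The genuine gap is in assertion (2). You claim that ``the amplified hypothesis, via Proposition \ref{prop-3-equiv}(4) applied to the ample cone, forces the eigenvalue on each one-dimensional complement to exceed $1$ in modulus.'' Proposition \ref{prop-3-equiv}(4) is a characterization of \emph{int-amplified} endomorphisms---it presupposes exactly the property you are trying to establish---so it cannot be fed the weaker amplified hypothesis; and amplifiedness by itself (one $L$ with $g^*L-L$ ample) gives no lower bound on eigenvalue moduli. The correct argument needs two separate inputs. First, at each divisorial (resp.\ Fano) contraction step the complementary eigenvalue is the action of $g_j^*$ on a rank-one quotient of the lattice $\NS(X_j)$ preserving the image of the nef cone, hence is a \emph{positive integer} (for a divisorial contraction it is the ramification index along the $g_j^{-1}$-invariant exceptional divisor, using $g_j^*E=\lambda E$). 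Second, one invokes the Krieger--Reschke characterization \cite{Kr} that an amplified endomorphism has no eigenvalue equal to $1$ on $\N^1(X)$; since each complementary eigenvalue also occurs as an eigenvalue of $g^*|_{\N^1(X)}$ by the block-triangular structure, it must therefore be an integer $\ge 2$. Only then, combined with $g_i$ being int-amplified and Proposition \ref{prop-3-equiv}(2), do all eigenvalues of $g^*|_{\N^1(X)}$ have modulus $>1$. Ruling out the eigenvalue $1$ without the integrality step, or asserting modulus $>1$ directly from amplifiedness, does not work.
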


Let 
$$\PEnd(X)$$ 
be the set of {\it all polarized endomorphisms} on $X$, and
let 
$$\IEnd(X)$$ 
be the set of {\it all int-amplified endomorphisms} on $X$.
In our earlier papers, we used $\Pol(X)$ and $\IAmp(X)$ 
instead of $\PEnd(X)$ and $\IEnd(X)$, respectively.
The new notation here is suggested by the referee.

In general, these sets are not semigroups, i.e., they may not be closed under composition; see \cite[Example 10.4]{Meng}.
When $X$ is rationally connected and smooth,
Theorem \ref{main-thm-rc} below gives the assertion that if $g$ and $h$ are in $\PEnd(X)$ (resp.~$\IEnd(X)$)
then $g^M \circ h^M$ remains in $\PEnd(X)$ (resp.~ $\IEnd(X)$) for some $M>0$ depending only on $X$.
In particular, it answers affirmatively \cite[Question 4.15]{YZ}, ``up to finite-index", when $X$ is rationally connected and smooth.
By \cite[Example 1.7]{MZ_PG}, this extra ``up to finite-index" assumption
is necessary.

\begin{theorem}\label{main-thm-rc} (cf.~\cite[Theorems 1.4, 6.2]{MZ_PG})
Let $X$ be a rationally connected smooth projective variety admitting a polarized (or int-amplified) endomorphism $f$.
Then there are a finite-index submonoid $G \le \SEnd(X)$ and an integer $M > 0$ both depending only on $X$,
such that:
\begin{itemize}
\item[(1)]
$G^*|_{\NS_{\Q}(X)}$ is a commutative diagonal
monoid with respect to a suitable $\Q$-basis $B$ of $\NS_{\Q}(X)$.
Further, for every $g$ in $G$,
the representation matrix $[g^*|_{\NS_{\Q}(X)}]_B$ relative to $B$,
is equal to $$\diag[q_1, q_2, \dots]$$
with integers $q_i \ge 1$.
\item[(2)]
$G \cap \PEnd(X)$ is a subsemigroup of $G$, and consists exactly of those $g$ in $G$
such that 
$$[g^*|_{\NS_{\Q}(X)}]_B = \diag[q, \dots, q]$$ 
for some integer $q \ge 2$.
\item[(3)]
$G \cap \IEnd(X)$ is a subsemigroup of $G$, and consists exactly of those $g$ in $G$
such that 
$$[g^*|_{\NS_{\Q}(X)}]_B = \diag[q_1, q_2, \dots]$$ 
with integers $q_i \ge 2$.
\item[(4)]
We have $h^M\in G$ and that $h^*|_{\NS_{\C}(X)}$ is diagonalizable for every $h \in \SEnd(X)$.
\end{itemize}
\end{theorem}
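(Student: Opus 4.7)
The plan is to specialize Theorem \ref{main-thm-GMMP} to the rationally connected setting, where the base $Q$-abelian variety collapses to a point, and then upgrade the resulting commutative diagonal $G$-action on $\NS_{\Q}(X)$ to one with positive integer entries by combining the lattice structure of $\NS(X)$ with preservation of the pseudo-effective cone.

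First, I would apply Theorem \ref{main-thm-GMMP} to the given $f$, producing a finite-index submonoid $G_0 \le \SEnd(X)$, a $Q$-abelian variety $Y$, and a $G_0$-equivariant MMP $X = X_0 \dashrightarrow \cdots \dashrightarrow X_r = Y$. Because $X$ is rationally connected and $Y$ admits a finite quasi-\'etale cover by an abelian variety, any dominant rational map $X \dashrightarrow Y$ must be constant, so $Y$ is a point; consequently $\NS_{\Q}(Y) = 0$ and the descended image $G_{0,Y} \subseteq \SEnd(Y)$ acts vacuously as commutative diagonal matrices. By Theorem \ref{main-thm-GMMP}(3) applied with $H = G_0$, the pullback monoid $G_0^*|_{\NS_\Q(X)}$ is therefore simultaneously diagonalizable in a common $\Q$-basis, and one checks that $G_0$ may be chosen to depend only on the MMP structure of $X$, hence only on $X$.

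Second, I would replace this basis by one $B$ of primitive integral classes of $\NS(X)$ inside the joint eigenspaces of $G_0^*$ and, by $G_0^*$-invariance and fullness of $\PE^1(X)$, inside the pseudo-effective cone. For any $g \in G_0$ and $v_i \in B$, $g^* v_i = \lambda_i^{(g)} v_i$ with $\lambda_i^{(g)} \in \Q$; primitivity of $v_i$ and $g^*(\NS(X)) \subseteq \NS(X)$ force $\lambda_i^{(g)} \in \Z$, while pseudo-effectivity of $v_i$ and surjectivity of $g$ force $\lambda_i^{(g)} \ge 0$; invertibility of $g^*$ on $\NS_\Q(X)$ then yields $\lambda_i^{(g)} \ge 1$, which is also consistent with Theorem \ref{main-thm-rc-diag}(2) applied to $f^s \in G_0$. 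This proves (1). Parts (2) and (3) follow directly from the characterizations of Section \ref{sec-2}: by Proposition \ref{prop-fx-x} a polarized $g \in G$ has $g^*|_{\N^1(X)}$ with eigenvalues of common modulus $q > 1$, so $[g^*]_B = \diag[q,\ldots,q]$ with $q \ge 2$; by Proposition \ref{prop-3-equiv} an int-amplified $g \in G$ has all eigenvalues of modulus $> 1$, hence positive integers $q_i \ge 2$. Closure under composition is automatic, since diagonal matrices with all entries equal (resp.~all entries $\ge 2$) are closed under multiplication.

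Finally, for (4) I would use the definition of finite-index: the chain $G = G_0 \le \cdots \le G_r = \SEnd(X)$ with finite-group kernels $F_i$ yields a common integer $M$ (e.g.~$M := \prod_i |F_i|$) such that $h^M \in G$ for every $h \in \SEnd(X)$. Then $(h^M)^* = (h^*)^M$ is diagonal with positive integer entries in $B$, and so $h^*|_{\NS_{\C}(X)}$ is itself diagonalizable, because its minimal polynomial over $\C$ divides the separable polynomial $\prod_i (x^M - q_i)$. The main obstacle I anticipate is the second step: arranging the joint eigenvectors of $G_0^*$ to be simultaneously primitive integral \emph{and} pseudo-effective. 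This requires that each joint eigenspace of the commuting diagonalizable operators $\{g^* : g \in G_0\}$ meet the relative interior of $\PE^1(X)$, which in turn relies on the rationality of the eigenspace decomposition and on cone rigidity afforded by rational connectedness of $X$.
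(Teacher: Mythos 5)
Your overall architecture --- reduce to Theorem \ref{main-thm-GMMP} with the $Q$-abelian base collapsing to a point (correct, since a rationally connected variety admits no non-constant rational map to a variety quasi-\'etale covered by an abelian variety), rescale the resulting common $\Q$-eigenbasis to primitive integral classes to force integer eigenvalues, read off (2) and (3) from Propositions \ref{prop-fx-x}, \ref{prop-3-equiv} and Theorem \ref{thm-num-lin}, and extract $M$ from the finite-index chain --- matches the intended proof in \cite{MZ_PG}, and your treatment of (4) (the minimal polynomial of $h^*$ divides a squarefree product of polynomials $x^M-q_i$ with $q_i\neq 0$) is correct. The genuine gap is exactly the one you flag: the positivity $q_i\ge 1$. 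Primitivity gives $q_i\in\Z\setminus\{0\}$, but your route to non-negativity requires a basis of joint eigenvectors lying in $\PE^1(X)$, and this is not automatic: a family of commuting operators preserving a salient full-dimensional cone need not admit an eigenbasis inside the cone (on $\R^2$ the map $\diag(3,2)$ preserves the cone spanned by $(1,0)$ and $(1,1)$, yet its eigenvector $(0,1)$ lies outside it). Nothing in your argument excludes an eigenline missing $\pm\PE^1(X)$, nor, more to the point, some $g\in G$ (e.g.\ an automorphism) acting by $-1$ on an eigenline; since (2) and (3) need $q_i=q$ rather than $|q_i|=q$, the sign question is not cosmetic.

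Two ways to close the gap. The cheap one stays inside your framework: once all eigenvalues are nonzero integers, $g\mapsto(\operatorname{sign}(q_1(g)),\operatorname{sign}(q_2(g)),\dots)$ is a monoid homomorphism from $G$ to the finite group $\{\pm 1\}^{\rho}$, so its kernel is a finite-index submonoid on which every $q_i\ge 1$; replace $G$ by this kernel and $M$ by $2M$. The route actually taken in \cite{MZ_PG} obtains positivity inductively along the $G$-equivariant MMP: at a divisorial contraction the new eigenvector is the exceptional divisor $E$, with $g^*E=aE$ for $a$ the (positive, integral) ramification index, while at a Fano contraction the new eigenvector pairs nontrivially with the contracted extremal ray, which $g_*$ preserves and scales by a positive factor inside the salient cone $\NE(X_i)$, forcing its eigenvalue to be positive. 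That route also delivers, essentially for free, the assertion that $G$ and $M$ depend only on $X$ --- via the finiteness of contractible extremal rays (Theorem \ref{main-thm-finite-R}), one takes $G$ inside the stabilizer of each such ray --- a point you assert with ``one checks'' but do not argue. With either repair your proof goes through.
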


Let $\Aut(X)$ be the group of all automorphisms of $X$, and $\Aut_0(X)$ its neutral connected component.
By applying Theorems \ref{main-thm-GMMP} and \ref{main-thm-rc}, we have the following result.

\begin{theorem}\label{main-thm-auto}(cf.~\cite[Theorems 1.5, 6.3]{MZ_PG})
Let $X$ be a rationally connected smooth projective variety.
Suppose $X$ admits a polarized (or int-amplified) endomorphism.
Then we have:
\begin{itemize}
\item[(1)]
$\Aut(X)/\Aut_0(X)$ is a finite group. More precisely, $\Aut(X)$ is a linear algebraic group
(with only finitely many connected components).
\item[(2)]
Every amplified endomorphism of $X$ is int-amplified.
\item[(3)]
$X$ has no automorphism of positive entropy (nor amplified automorphism).
\end{itemize}
\end{theorem}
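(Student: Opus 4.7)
The plan is to deduce all three parts from the diagonalization structure of the pullback action provided by Theorem \ref{main-thm-rc}, combined with classical facts about automorphism schemes and topological entropy.

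For (1), the key input is Theorem \ref{main-thm-rc}(4): every $h \in \SEnd(X)$ satisfies $h^M \in G$, with $M$ and $G$ depending only on $X$. Applied to $\sigma \in \Aut(X)$ and to $\sigma^{-1}$, this gives both $\sigma^M \in G$ and $(\sigma^M)^{-1} = (\sigma^{-1})^M \in G$; in the diagonalizing basis $B$, each of $(\sigma^M)^*|_{\NS_\Q(X)}$ and its inverse is a diagonal matrix with positive integer entries, so both must be the identity. Thus $(\sigma^*|_{\NS_\Q(X)})^M = \id$, and the natural map $\Aut(X) \to \GL(\NS(X))$ has image of exponent dividing $M$. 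Being a subgroup of $\GL_{\rho(X)}(\Z)$, this image is finite by Minkowski's theorem on torsion in $\GL_n(\Z)$. Its kernel $\Aut_\tau(X)$ contains $\Aut_0(X)$ as a subgroup of finite index by the Lieberman--Fujiki theorem for compact K\"ahler manifolds, so $\Aut(X)/\Aut_0(X)$ is finite. Since $X$ is rationally connected, $\Alb(X) = 0$, so $\Aut_0(X)$ is a connected linear algebraic group and $\Aut(X)$ is a linear algebraic group with finitely many connected components.

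For (2), let $\sigma$ be amplified with $\sigma^* L - L = H$ ample. By Theorem \ref{main-thm-rc}(4), $\sigma^M \in G$ and $\sigma^*|_{\NS_\C(X)}$ is diagonalizable; the eigenvalues $\lambda_j$ satisfy $\lambda_j^M = q_j$, where $(\sigma^M)^*|_{\NS_\Q(X)} = \diag[q_1, \dots, q_{\rho(X)}]$ in $B$ with integers $q_j \ge 1$. By Proposition \ref{prop-3-equiv}(2) combined with Theorem \ref{main-thm-rc}(3), $\sigma$ is int-amplified iff every $q_j \ge 2$, iff $\sigma^M \in G \cap \IEnd(X)$. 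To establish this, the plan is to run the $G$-equivariant MMP of Theorem \ref{main-thm-GMMP} starting from $X$; since $X$ is rationally connected, the terminal $Q$-abelian base is a point, and the penultimate variety $X_{r-1}$ is Fano with $\rho(X_{r-1})=1$. The first sub-task is to show that the amplified property of $\sigma^M$ descends along each $G$-equivariant MMP step, using $\Q$-factoriality and the fact that $(\sigma^M)^*$ is diagonal in $B$ so that the $G$-invariant decomposition of $\NS_\Q(X)$ matches the exceptional loci produced by the MMP. On $X_{r-1}$ with $\rho = 1$, the descended action is scaling by a positive integer, and ``amplified'' forces this integer to be $>1$, i.e., the descended map is int-amplified. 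Theorem \ref{main-thm-GMMP}(2) then lifts the int-amplified conclusion back to $\sigma^M$ on $X$, and $\sigma$ itself is int-amplified because each $\lambda_j$ satisfies $|\lambda_j|^M = q_j \ge 2$.

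For (3), an amplified automorphism $\sigma$ would be int-amplified by (2), hence of degree $>1$ by \cite[Lemma 3.10]{Meng}, contradicting $\sigma \in \Aut(X)$. To rule out positive entropy, the proof of (1) shows $(\sigma^*|_{\NS_\C(X)})^M = \id$ for every $\sigma \in \Aut(X)$, so the eigenvalues of $\sigma^*|_{\NS_\C(X)}$ are $M$-th roots of unity; the first dynamical degree of $\sigma$ equals $1$, and the Gromov--Yomdin formula then yields vanishing topological entropy. The hard step is the descent in (2): whereas descent of int-amplified and polarized properties is clean (Proposition \ref{lem-int-des1}), direct descent of the amplified witness $H$ through birational contractions and flips requires tracking how the ample class interacts with the $G$-invariant extremal rays, which is where the diagonalization of $(\sigma^M)^*$ in $B$ and the $G$-equivariance of the MMP become essential.
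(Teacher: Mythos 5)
Your arguments for parts (1) and (3) are essentially the intended ones: applying Theorem \ref{main-thm-rc}(4) to both $\sigma$ and $\sigma^{-1}$ forces $(\sigma^M)^*|_{\NS_{\Q}(X)}=\id$ (two mutually inverse diagonal matrices with positive integer entries are both the identity), finiteness of the image in $\GL(\NS(X))$ plus Lieberman--Fujiki gives (1), and (3) follows from (2) together with the vanishing of the first dynamical degree minus one. (Two small points: the finiteness of a bounded-exponent subgroup of $\GL_n(\Z)$ is really Minkowski's lemma that the reduction map $\GL_n(\Z)\to\GL_n(\Z/3)$ is injective on torsion, or Burnside; and passing from $\lambda_1=1$ to zero entropy uses the log-concavity of dynamical degrees before invoking Gromov--Yomdin. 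Both are standard.)

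Part (2) is where your proposal has a genuine gap. Your plan is to descend the \emph{amplified} property of $\sigma^M$ step by step along the $G$-equivariant MMP down to the Fano contraction $X_{r-1}\to X_r$. But descending the amplified property through an equivariant morphism is precisely the open problem the paper flags in Section \ref{sec-2} (``we do not know whether the amplified property can be descended via an equivariant morphism,'' cf.\ \cite[Question 1.10]{Kr}); unlike the polarized and int-amplified cases (Proposition \ref{lem-int-des1}), there is no cone-theoretic argument available, because an eigenvector of $f^*$ witnessing amplification need not lie in any invariant cone that maps onto the corresponding cone of the target. You label this step ``the hard step'' and ``the first sub-task'' but never actually carry it out, so the proof of (2) is incomplete as written. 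The intended route avoids this entirely: it is a direct application of Theorem \ref{main-thm-GMMP}(2). Since $X$ is rationally connected, the end product $Y=X_r$ of the $G$-equivariant MMP is a $Q$-abelian variety that is also rationally connected, hence a point (a positive-dimensional $Q$-abelian variety is non-uniruled). Given an amplified $\sigma$, some power $\sigma^N$ lies in the finite-index submonoid $G$ and is still amplified; its descent $\sigma_r$ to the point $X_r$ is vacuously int-amplified ($\N^1(\mathrm{pt})=0$), so Theorem \ref{main-thm-GMMP}(2) applies with $i=r$ and yields that $\sigma^N$ is int-amplified, whence $\sigma$ is int-amplified by the eigenvalue criterion of Proposition \ref{prop-3-equiv}(2). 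No descent of the amplified witness is needed at any intermediate stage.
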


\section{Characterizations of toric varieties}\label{sec-6}

A normal projective variety $X$ is said to be {\it toric} or a {\it toric variety}
if $X$ contains an algebraic torus $T = (k^*)^n$ as an (affine) open dense subset such that
the natural multiplication action of $T$ on itself extends to an action on the whole variety $X$.
In this case, let 
$$D:=X\backslash T$$ 
which is a divisor; the pair $(X,D)$ is said to be a {\it toric pair}.

Let $G$ be a linear algebraic group acting on a normal projective variety $X$.
We say $X$ is {\it $G$-almost homogeneous} if there exists an open dense $G$-orbit in $X$.
Note that $X$ is toric if and only if $X$ is $T$-almost homogeneous for some algebraic torus $T$.
The following result gives a sufficient condition, in terms of a polarized endomorphism,
for an almost homogeneous variety to be toric.
The key ingredient in the proof is the main result in
\cite{BZ}.

\begin{theorem}\label{thm-torus}(cf.~\cite[Theorem 1.1]{MZ-toric})
Let $f:X\to X$ be a polarized endomorphism of a $G$-almost homogeneous normal projective variety $X$ with $G$ being a linear algebraic group.
Assume further the following conditions.
\begin{itemize}
\item[(i)] Let $U$ be the open dense $G$-orbit in $X$ and $D$ the codimension-$1$ part of $X \setminus U$.
The Weil divisor $K_X+D$ is $\Q$-Cartier.
\item[(ii)]
The endomorphism $f$ is $G$-equivariant in the sense: there is a surjective
homomorphism $\varphi: G \to G$ such that
$$f\circ g=\varphi(g)\circ f$$ 
for all $g$ in $G$.
\end{itemize}
Then $(X, D)$ is a toric pair.
\end{theorem}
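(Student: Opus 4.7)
The plan is to combine the polarized endomorphism $f$ with the $G$-equivariance to verify the positivity and singularity hypotheses required by the main result of \cite{BZ}, which should characterize suitable $G$-almost homogeneous lc pairs as toric. The $G$-equivariance of $f$ via the surjection $\varphi\colon G\to G$ forces $f$ to permute $G$-orbits, so since $U$ is the unique open orbit and $f$ is finite and surjective, one has $f^{-1}(U)=U$, hence $f^{-1}(D)=D$ set-theoretically. Writing $f^{\ast}D=\sum_{j} e_j D_j$ with the $D_j$ the components of $D$ and $e_j$ the ramification indices, the ramification divisor $R_f$ equals $\sum_{j}(e_j-1)D_j$ along $D$ in characteristic zero. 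The log ramification formula
$$K_X+D \;=\; f^{\ast}(K_X+D)+R', \qquad R':=R_f+D-f^{\ast}D,$$
therefore produces an effective $\Q$-Weil divisor $R'$ whose support is disjoint from $D$.

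Taking classes in the displayed formula yields $(f^{\ast}-\id)(-[K_X+D])=[R']\in\PE^1(X)$. Since $f$ is polarized, the ample fixed direction of $f^{\ast}$ lies in the interior of $\PE^1(X)$, whose closure contains no line. Proposition \ref{prop-fx-x}(2), applied with $\varphi=f^{\ast}|_{\N^1(X)}$, $C=\PE^1(X)$ and $v=-[K_X+D]$, then forces $-(K_X+D)$ to be pseudo-effective. To upgrade this to log canonicity of the pair $(X,D)$, I would adapt the proof of Theorem \ref{main-thm-lc} to the log setting: the same log ramification formula shows that a hypothetical non-lc valuation of $(X,D)$ would have its log discrepancy contracted under pullback by $f$, producing infinitely many distinct non-lc centres and contradicting the finiteness of $f^{-1}$-periodic subvarieties (cf.\ Proposition \ref{prop-finiteclosed}). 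With $(X,D)$ now lc, $K_X+D$ being $\Q$-Cartier, $-(K_X+D)$ pseudo-effective, and $X$ being $G$-almost homogeneous with $U=X\setminus D$ the open orbit, the hypotheses of \cite{BZ} are met and $(X,D)$ is a toric pair.

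The main obstacle is the log canonicity step, since the Broustet--H\"oring/Meng proof behind Theorem \ref{main-thm-lc} must be run for the log pair $(X,D)$ rather than for $X$ alone, which requires a log analogue of the equivariant MMP of Section \ref{sec-4} with the $G$-action and the descent along $\varphi$ controlling the non-lc locus. A secondary obstacle is matching the exact positivity hypothesis in \cite{BZ}: should bigness or nefness of $-(K_X+D)$ be demanded rather than mere pseudo-effectivity, one must exploit the sharper scalar information about $f^{\ast}|_{\N^1(X)}$ available in the polarized case (cf.\ Theorem \ref{main-thm-mmp}(4) after equivariant MMP) and intersect with suitable powers of $L$ to produce the required representative.
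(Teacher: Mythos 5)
Your opening moves do match the paper's: $G$-equivariance gives $f^{-1}(U)=U$ and $f^{-1}(D)=D$, the logarithmic ramification formula $K_X+D=f^{\ast}(K_X+D)+R'$ with $R'\ge 0$ having no component in $D$ is correct, and Proposition \ref{prop-fx-x}(2) applied to $\PE^1(X)$ does yield that $-(K_X+D)$ is pseudo-effective. The proof breaks at the point where you feed this into \cite{BZ}. The result of \cite{BZ} is about the log Kodaira dimension of the open orbit: for $U$ homogeneous under a connected linear algebraic group, $\bar\kappa(U)$ is $-\infty$ unless $U$ is a torus (in which case it is $0$), the non-torus case being detected by $U$ being covered by affine lines. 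To invoke it one must produce \emph{positivity of $K_X+D$ itself} (in the proof, its numerical triviality), not pseudo-effectivity of $-(K_X+D)$; you have the sign backwards, and no strengthening to nefness or bigness of $-(K_X+D)$ can repair this. Concretely, take $X=\mathbb{P}^2$ with $G$ the group of affine transformations, $U=\mathbb{A}^2$ and $D$ the line at infinity: the pair is log smooth (hence lc), $K_X+D$ is Cartier, and $-(K_X+D)=2H$ is ample, so every hypothesis you propose to verify holds, yet $(X,D)$ is not a toric pair because $U$ is not a torus. (The theorem is not contradicted here only because no polarized $f$ is $G$-equivariant for this $G$.) So the statement you attribute to \cite{BZ} is false as you use it.

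The missing idea is that homogeneity forces $R'=0$. From $f(g\cdot x)=\varphi(g)\cdot f(x)$ with $g$ and $\varphi(g)$ automorphisms, the ramification locus of $f|_U\colon U\to U$ is a $G$-stable proper closed subset of the single orbit $U$ (properness because $f$ is generically \'etale in characteristic $0$), hence empty; so $f|_U$ is \'etale. Any component of $R'$ would be a prime divisor of $X$ not contained in $D$, hence not contained in $X\setminus U$ (whose non-$D$ part has codimension $\ge 2$), hence would meet the ramification locus of $f|_U$ --- impossible. Thus $R'=0$ and $f^{\ast}(K_X+D)\sim_{\Q}K_X+D$. Since $f$ is polarized, all eigenvalues of $f^{\ast}|_{\N^1(X)}$ have modulus $q>1$ by Proposition \ref{prop-fx-x}(1), so $f^{\ast}|_{\N^1(X)}$ has no nonzero fixed vector and $K_X+D\equiv 0$. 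It is this triviality that rules out the $\bar\kappa(U)=-\infty$ alternative of \cite{BZ} and identifies $U$ with a torus, after which the extension of the torus action to $X$ makes $(X,D)$ a toric pair. Your proposed detour through log canonicity of $(X,D)$ (itself only sketched, and resting on a log analogue of Theorem \ref{main-thm-lc} that you do not supply) is not where the difficulty lies and does not substitute for the two steps above.
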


Next we are going to state a geometric characterization of a toric pair.
Let $(X,\Delta)$ be a log pair, i.e.,
$X$ is a normal projective variety, $\Delta$ is an effective $\Q$-divisor and the divisor $K_X + \Delta$ is $\Q$-Cartier.
The {\it complexity} 
$$c = c(X,\Delta)$$ 
of the pair $(X,\Delta)$ is defined as
$$c:=\inf\{n+\dim_{\R}(\sum_i \, \R[S_i])-\sum a_i\,|\, \sum a_i S_i\le \Delta, a_i\ge 0, S_i\ge 0\}.$$
Here $\sum_i \, \R[S_i]$ is a subspace, spanned by the Weil divisor classes $[S_i]$, of the real space spanned by all Weil divisor classes on $X$ (modulo algebraic equivalence).
Brown, ${\rm M}^c$Kernan, Svaldi and Zong recently gave a geometric characterization of toric varieties involving the complexity; see \cite[Theorem 1.2]{BMSZ}.
Their result is a special case of a conjecture of Shokurov, which is stated in the relative case (cf.~\cite{Sho}).
A simple version of their result shows that
{\it if $(X,\Delta)$ is a log canonical pair such that $\Delta$ is reduced, $-(K_X + \Delta)$ is nef, and
$c(X,\Delta)$ is non-positive, then $(X,\Delta)$ is a toric pair};
see \cite[Remark 4.4]{MZ-toric}.

Let $X$ be an $n$-dimensional smooth Fano variety of Picard number one
and $D\subset X$ a reduced divisor.
Assume the existence of a non-isomorphic surjective endomorphism $f : X \to X$ such that  $D$ is $f^{-1}$-invariant and $f|_{X\backslash D}$ is \'etale.
Hwang and Nakayama have shown that $X$ is isomorphic to $\mathbb{P}^n$ and $D$ is a simple normal crossing divisor consisting of $n + 1$ hyperplanes;
see \cite[Theorem 2.1]{HN}.
In particular, $(X,D)$ is a toric pair.
Indeed, their argument shows that the complexity $c(X,D)$ is non-positive.

Our Theorem \ref{thm-pitrivial} follows their idea and tries to generalize their result to the singular case. Precisely, let $X$ be a normal projective variety and $D$ a reduced divisor such that the pair 
$(X, D)$ is log smooth over an open set $U \subseteq X$, 
i.e., $U$ is smooth and $D \cap U$ is of simple normal crossing,
where $X \setminus U$ has codimension $\ge 2$ in $X$.
Let 
$$\hat{\Omega}_X^1(\log D) = \iota_*\hat{\Omega}_X^1(\log D)|_U$$
with $\iota: U \to X$ the inclusion, which is a reflexive sheaf and is independent of
the choice of the above $U$.
Our key step is to verify that 
$$\hat{\Omega}_X^1(\log D)$$
is free, i.e., isomorphic to $\mathcal{O}_X^{\oplus n}$;
see \cite[Proposition 2.3]{HN} and \cite[Theorem 5.4]{MZ-toric}.

\begin{theorem}\label{thm-pitrivial}(cf.~\cite[Theorem 1.2]{MZ-toric})
Let $X$ be a normal projective variety
which is smooth in codimension $2$, and $D\subset X$ a reduced divisor such that
\begin{itemize}
\item[(i)]
there is a Weil $\Q$-divisor $\Gamma$ such that the pair $(X, \Gamma)$ has only klt singularities;
\item[(ii)]
there is a polarized endomorphism $f:X\to X$ such that $D$ is $f^{-1}$-invariant and $f|_{X\backslash D}$ is quasi-\'etale;
\item[(iii)]
the algebraic fundamental group
$\pi_1^{\alg}(X_{\reg})$ of the smooth locus $X_{\reg}$ of $X$ is trivial
(this holds when $X$ is smooth and rationally connected); and
\item[(iv)]
the irregularity $q(X) := h^1(X, \mathcal{O}_X)$ is zero (this holds when $X$ is rationally connected).
\end{itemize}
Then the complexity $c(X,D)$ is non-positive.
\end{theorem}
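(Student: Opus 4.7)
My plan is to follow the strategy of Hwang--Nakayama \cite{HN} as adapted to the singular setting in the paper. The crux, as the excerpt emphasizes, is to show $\hat{\Omega}_X^1(\log D) \isom \OO_X^{\oplus n}$; once this is in hand, the bound $c(X,D) \le 0$ follows by producing enough logarithmic vector fields tangent to $D$.

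I first establish the numerical identity $K_X + D \equiv 0$. Since $f|_{X\setminus D}$ is quasi-\'etale and $f^{-1}(D) = D$ set-theoretically, after replacing $f$ by a positive power so that each component of $D$ is $f^{-1}$-invariant, a direct logarithmic ramification computation gives $f^*(K_X + D) = K_X + D$ as Weil divisors (the ramification divisor $R_f$, supported on $D$, cancels exactly against $f^*D - D = \sum (e_j-1)D_j$). Because $f$ is polarized, Proposition~\ref{prop-fx-x} applied to $\varphi := f^*|_{\N^1(X)}$ shows every eigenvalue of $\varphi$ has modulus $q > 1$, so the $\varphi$-fixed subspace is zero; hence $K_X + D \equiv 0$.

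Next I would prove freeness. On the log-smooth open set $U \subseteq X$ (whose complement has codimension $\ge 2$), $\Omega^1_U(\log D|_U)$ is locally free; on $f^{-1}(U) \cap U$ the map $f$ is logarithmically \'etale along $D$, so by reflexivity the pullback extends to a sheaf map $f^*\hat{\Omega}_X^1(\log D) \to \hat{\Omega}_X^1(\log D)$. Combined with $c_1 \equiv 0$, the klt hypothesis (i), and iteration of $f^*$, an argument in the spirit of \cite[Proposition 2.3]{HN} (see also \cite[Theorem 5.4]{MZ-toric}) forces $\hat{\Omega}_X^1(\log D)$ to be numerically flat. Hypothesis (iii), $\pi_1^{\alg}(X_{\reg}) = 1$, kills the monodromy of the associated flat reflexive representation, and (iv), $q(X) = 0$, prevents nontrivial extensions by $\OO_X$; together they upgrade numerical flatness to the trivialization $\hat{\Omega}_X^1(\log D) \isom \OO_X^{\oplus n}$.

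Dualizing, $\hat{T}_X(-\log D) \isom \OO_X^{\oplus n}$ furnishes $n$ globally linearly independent logarithmic vector fields tangent to $D$; these exponentiate to a connected algebraic subgroup $G \subseteq \Aut(X, D)$ of dimension $\ge n$. Since $\Alb(X)$ vanishes by (iv), $G$ is affine, and by dimension count $X$ is $G$-almost homogeneous with the complement of the open orbit contained in $D$. The standard complexity computation for almost homogeneous pairs (a direct variant of the toric case) then delivers $c(X, D) \le 0$, as each $G$-invariant prime component of $D$ contributes a unit weight in the infimum defining $c$, while the divisor classes they span have dimension bounded by the Picard number. The hard part will be the freeness step: pushing the Hwang--Nakayama numerical-flatness argument across the klt singularities of $X$ while keeping the pullback by $f$ compatible with reflexivization is the technical core, and it is precisely where hypotheses (i)--(iv) must all play together.
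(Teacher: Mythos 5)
Your first two steps track the paper's intended route faithfully: the log-ramification computation giving $f^*(K_X+D)=K_X+D$, the eigenvalue argument forcing $K_X+D\equiv 0$ (strictly, you should run this on $\N_{n-1}(X)$ via Proposition \ref{prop-4-equiv}-type statements, since neither $K_X$ nor $D$ is assumed $\Q$-Cartier, so $f^*|_{\N^1(X)}$ is not directly applicable), and the identification of the key step as the trivialization $\hat{\Omega}_X^1(\log D)\isom \OO_X^{\oplus n}$ via numerical flatness, $\pi_1^{\alg}(X_{\reg})=1$ and $q(X)=0$. That is exactly the skeleton of \cite[Theorem 5.4]{MZ-toric} following \cite[Proposition 2.3]{HN}.

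The gap is in your last paragraph. From freeness you pass to $n$ global logarithmic vector fields, integrate them to a linear algebraic group $G$ of dimension $n$ acting with a dense orbit, and then invoke a ``standard complexity computation for almost homogeneous pairs.'' No such general computation exists, and the implication is false: take $X=\PP^2$ with $D=L_1+L_2$ two lines and $G=\mathbb{G}_a\rtimes \mathbb{G}_m$ acting with open orbit $X\setminus D$; then $c(X,D)=2+1-2=1>0$. Almost homogeneity under an $n$-dimensional linear group gives you no lower bound on the number $r$ of components of $D$, and $c(X,D)\le n+\dim_{\R}\bigl(\sum_i \R[D_i]\bigr)-r$ is non-positive only if $r\ge n+\dim_{\R}\bigl(\sum_i \R[D_i]\bigr)$. (Deducing toricness from the group action instead would need the extra hypotheses of Theorem \ref{thm-torus}, e.g.\ $G$-equivariance of $f$ and the Brion--Zhang input, and would in any case be circular with respect to the complexity criterion.) The paper's deduction is different and cohomological: apply $H^0$ to the reflexive residue sequence $0\to \hat{\Omega}_X^1\to \hat{\Omega}_X^1(\log D)\to \bigoplus_i \OO_{D_i}$, note $h^0(X,\hat{\Omega}_X^1)=0$ because $q(X)=0$ and $X$ has rational singularities, and observe that the image of $H^0(\hat{\Omega}_X^1(\log D))\to \C^{r}$ lands in the kernel of the first Chern class map $\C^{r}\to H^1(X,\hat{\Omega}_X^1)$, $e_i\mapsto [D_i]$. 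This yields $n\le r-\dim_{\R}\bigl(\sum_i \R[D_i]\bigr)$ and hence $c(X,D)\le 0$. You need to replace your final paragraph by this count of components; the group-theoretic detour does not close the argument.
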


An immediate corollary is the following.
\begin{corollary}\label{smooth-cor}(cf.~\cite[Corollary 1.4]{MZ-toric}) Let $X$ be a rationally connected smooth projective variety and $D\subset X$ a reduced divisor.
Suppose $f:X\to X$ is a polarized endomorphism such that $D$ is $f^{-1}$-invariant and $f|_{X\backslash D}$ is quasi-\'etale.
Then $(X,D)$ is a toric pair.
\end{corollary}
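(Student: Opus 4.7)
The plan is to reduce the corollary to Theorem \ref{thm-pitrivial} combined with the Brown--McKernan--Svaldi--Zong (BMSZ) characterization of toric pairs recalled in the paragraph immediately before Theorem \ref{thm-pitrivial}.

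First I would verify the four hypotheses of Theorem \ref{thm-pitrivial}. Since $X$ is smooth, it is a fortiori smooth in codimension two, and $(X,0)$ is klt, so condition (i) holds with $\Gamma=0$; condition (ii) is given. For (iii), a smooth rationally connected projective variety in characteristic zero is simply connected by Campana and Koll\'ar--Miyaoka--Mori, so $\pi_1^{\alg}(X_{\reg})=\pi_1^{\alg}(X)$ is trivial. For (iv), rational connectedness forces $h^1(X,\mathcal{O}_X)=0$. Hence Theorem \ref{thm-pitrivial} applies and yields $c(X,D)\leq 0$.

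Next I would assemble the remaining inputs of the BMSZ criterion: that $(X,D)$ is lc with $D$ reduced and $-(K_X+D)$ nef. $D$ is reduced by hypothesis. For the nefness, I would invoke the logarithmic ramification formula. Since $f|_{X\setminus D}$ is quasi-\'etale and $f^{-1}(D)=D$, the finite map $f$ permutes the prime components of $D$; if $e_j$ denotes the ramification index along the component $D_j$, then $R_f=\sum_j(e_j-1)D_j$ while $f^*D=\sum_j e_j D_j$, so a direct cancellation gives $f^*(K_X+D)=K_X+D$. Because $f$ is polarized, every eigenvalue of $f^*$ on $\N^1(X)$ has modulus $q>1$ (cf.~Proposition \ref{prop-fx-x}), so the only $f^*$-fixed numerical class is $0$; hence $K_X+D\equiv 0$ and $-(K_X+D)$ is nef.

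For the log canonical property, I would invoke the key intermediate step in the proof of Theorem \ref{thm-pitrivial}, namely that $\hat{\Omega}_X^1(\log D)\simeq \mathcal{O}_X^{\oplus n}$ under the standing hypotheses. On a smooth $X$, freeness of $\Omega_X^1(\log D)$ forces $D$ to have simple normal crossings, so $(X,D)$ is log smooth and in particular lc. Feeding $D$ reduced, $(X,D)$ lc, $-(K_X+D)\equiv 0$ nef, and $c(X,D)\leq 0$ into the BMSZ criterion then gives that $(X,D)$ is a toric pair.

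The main obstacle is the log canonical step: it is not packaged into the stated conclusion of Theorem \ref{thm-pitrivial} and must be extracted from an intermediate point in its proof (or established independently from the polarized dynamics, e.g.~via the freeness of $\hat{\Omega}_X^1(\log D)$). Everything else is routine bookkeeping with the ramification formula and known structural results on rationally connected varieties.
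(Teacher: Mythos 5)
Your overall strategy is the intended one: verify hypotheses (i)--(iv) of Theorem \ref{thm-pitrivial} (all four checks are correct for a smooth rationally connected $X$ in characteristic $0$), deduce $c(X,D)\le 0$, establish $K_X+D\equiv 0$ from the ramification formula $f^*(K_X+D)\sim K_X+D$ together with the fact that $f^*|_{\N^1(X)}$ has no eigenvalue of modulus $1$ when $f$ is polarized, and then feed everything into the Brown--${\rm M}^c$Kernan--Svaldi--Zong criterion quoted before Theorem \ref{thm-pitrivial}. You are also right to flag that the log canonicity of $(X,D)$ is the one hypothesis of that criterion which is not contained in the stated conclusion of Theorem \ref{thm-pitrivial} and must be supplied separately.

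The gap is in how you supply it. The claim that freeness of $\hat{\Omega}_X^1(\log D)$ on a smooth $X$ forces $D$ to be simple normal crossing is not justified, and it is false at the level of local freeness: by Saito's theory of free divisors, every reduced curve on a smooth surface (e.g.\ a cuspidal cubic in $\PP^2$) has locally free logarithmic (co)tangent sheaf without being normal crossing, and you give no argument for why global triviality of the reflexive sheaf $\hat{\Omega}_X^1(\log D)$ --- whose evaluation into $\Omega^1_X\otimes k(x)$ may degenerate at singular points of $D$, as it does for the cusp --- would rule out such examples. The way this step is actually handled is dynamical rather than sheaf-theoretic: since $D$ is $f^{-1}$-invariant and $f|_{X\setminus D}$ is quasi-\'etale, one has $f^*(K_X+D)\sim K_X+D$ with $D$ the full ramification boundary, and the Broustet--H\"oring discrepancy argument for polarized endomorphisms (cf.~\cite{BH}; this is the pair version of the argument behind Theorem \ref{main-thm-lc}, carried out in \cite[\S 4]{MZ-toric}) shows directly that $(X,D)$ is log canonical. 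With that substitution the rest of your proof goes through.
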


We say that a normal projective variety $X$ is of {\it Fano type} if there is a Weil $\Q$-divisor $\Delta$ such that the pair $(X,\Delta)$ has only klt singularities and $-(K_X+\Delta)$ is an ample $\Q$-Cartier divisor.
The assumption below of $X$ being of Fano type is necessary,
since a normal projective toric variety is known to be of Fano type.
The lifting in the following corollary is usually needed too; see \cite[Remark 1.7]{MZ-toric}.

\begin{corollary}\label{toric-lift-cor}(cf.~\cite[Corollary 1.5]{MZ-toric})
Let $f:X\to X$ be a polarized endomorphism of a normal projective variety $X$ of Fano type which is smooth in codimension $2$.
Let $D\subset X$ be an $f^{-1}$-invariant reduced divisor such that $f|_{X\backslash D}$ is quasi-\'etale and $K_X+D$ is $\Q$-Cartier.
Then there exist a quasi-\'etale cover 
$$\pi:\widetilde{X}\to X$$ 
and a polarized endomorphism 
$$\widetilde{f}:\widetilde{X}\to \widetilde{X}$$ 
such that
\begin{itemize}
\item[(1)] the endomorphism $f$ lifts to $\tilde{f}$, i.e., 
$$\pi\circ \widetilde{f}=f\circ \pi ,$$
and
\item[(2)] the pair $(\widetilde{X},\widetilde{D})$ is toric, where $\widetilde{D}=\pi^{-1}(D)$.
\end{itemize}
\end{corollary}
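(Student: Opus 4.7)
The plan is to reduce to Theorem \ref{thm-pitrivial} by pulling the data $(X,D,f)$ back along a finite quasi-\'etale cover that trivialises the algebraic fundamental group of the regular locus; from $c(\widetilde{X},\widetilde{D})\le 0$ supplied by that theorem, one then invokes the Brown--${\rm M}^c$Kernan--Svaldi--Zong geometric criterion recalled after Theorem \ref{thm-pitrivial} to conclude toricness of $(\widetilde{X},\widetilde{D})$. In spirit this mirrors Corollary \ref{smooth-cor}, except that one must first construct the cover on which the fundamental group and the irregularity vanish.

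First I would construct the cover. Since $X$ is of Fano type and in particular klt, $\pi_1^{\alg}(X_{\reg})$ is finite, so the associated universal quasi-\'etale cover $\pi:\widetilde{X}\to X$ is a finite Galois morphism with group $G=\pi_1^{\alg}(X_{\reg})$, after which $\pi_1^{\alg}(\widetilde{X}_{\reg})=1$. Because $\pi$ is \'etale in codimension one, the pullback of the klt boundary on $X$ makes $\widetilde{X}$ again klt of Fano type, still smooth in codimension two, and hence rationally connected in characteristic zero, giving $q(\widetilde{X})=0$. Setting $\widetilde{D}:=\pi^{-1}(D)$, which is reduced, one has $K_{\widetilde{X}}+\widetilde{D}=\pi^\ast(K_X+D)$, still $\Q$-Cartier.

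Next I would lift $f$ to $\widetilde{f}:\widetilde{X}\to\widetilde{X}$ by forming the normalisation $P$ of the fibre product $\widetilde{X}\times_X\widetilde{X}$ in which the first factor maps via $f\circ\pi$ and the second via $\pi$. The first projection $p_1:P\to\widetilde{X}$ is the base change of $\pi$, hence finite quasi-\'etale; since $\pi_1^{\alg}(\widetilde{X}_{\reg})=1$, every connected component of $P$ is isomorphic to $\widetilde{X}$ via $p_1$. Choosing any section $s$ of $p_1$ and setting $\widetilde{f}:=p_2\circ s$ yields a morphism with $\pi\circ\widetilde{f}=f\circ\pi$, which is (1). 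That $\widetilde{f}$ is polarized (pull a polarization $L$ of $f$ back by $\pi$: $\widetilde{f}^\ast\pi^\ast L\sim q\pi^\ast L$, and $\pi^\ast L$ is ample), that $\widetilde{f}^{-1}(\widetilde{D})=\widetilde{D}$, and that $\widetilde{f}|_{\widetilde{X}\setminus\widetilde{D}}$ is quasi-\'etale are all immediate from the defining identity $\pi\widetilde{f}=f\pi$.

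All four hypotheses of Theorem \ref{thm-pitrivial} now hold for $(\widetilde{X},\widetilde{D},\widetilde{f})$, so $c(\widetilde{X},\widetilde{D})\le 0$. To apply the Brown--${\rm M}^c$Kernan--Svaldi--Zong criterion I still need $(\widetilde{X},\widetilde{D})$ log canonical and $-(K_{\widetilde{X}}+\widetilde{D})$ nef. The ramification decomposition $\widetilde{f}^\ast\widetilde{D}-R_{\widetilde{f}}=\widetilde{D}$, forced by $\widetilde{f}^{-1}(\widetilde{D})=\widetilde{D}$ and the quasi-\'etaleness of $\widetilde{f}$ off $\widetilde{D}$, yields $\widetilde{f}^\ast(K_{\widetilde{X}}+\widetilde{D})=K_{\widetilde{X}}+\widetilde{D}$, and the polarizedness of $\widetilde{f}$ then forces $K_{\widetilde{X}}+\widetilde{D}\equiv 0$ by Proposition \ref{prop-fx-x}, so $-(K_{\widetilde{X}}+\widetilde{D})$ is nef. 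Log canonicity of $(\widetilde{X},\widetilde{D})$ is obtained from the same $\widetilde{f}^\ast$-invariance by the method underlying Theorem \ref{main-thm-lc}, adapted from the ambient variety to the pair. The main technical obstacle I expect is precisely this last point: the klt property of the ambient $\widetilde{X}$ alone does not govern log discrepancies along the boundary, and converting the self-similarity $\widetilde{f}^\ast(K_{\widetilde{X}}+\widetilde{D})=K_{\widetilde{X}}+\widetilde{D}$ into the lc-ness of $(\widetilde{X},\widetilde{D})$ requires a careful argument in the spirit of Broustet--H\"{o}ring and Meng.
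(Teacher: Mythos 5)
Your proposal is correct and follows essentially the same route as the proof of \cite[Corollary 1.5]{MZ-toric} that this corollary cites: pass to the quasi-\'etale cover killing the (finite, by Fano type) group $\pi_1^{\alg}(X_{\reg})$, lift $f$ via a section of the normalized fibre product, verify the hypotheses of Theorem \ref{thm-pitrivial} to get $c(\widetilde{X},\widetilde{D})\le 0$, and conclude by the criterion of \cite{BMSZ}. The one step you leave open --- log canonicity of $(\widetilde{X},\widetilde{D})$ from the invariance $\widetilde{f}^*(K_{\widetilde{X}}+\widetilde{D})=K_{\widetilde{X}}+\widetilde{D}$ --- is not an obstacle requiring new work: it is exactly the content of Broustet--H\"oring's theorem on log pairs preserved by a polarized endomorphism (cf.~\cite{BH}), which is what \cite{MZ-toric} invokes at this point, so your argument is complete once that citation is inserted.
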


The following well known conjecture is still open (see also Question \ref{que-toric}).
It has been affirmatively solved in dimension $\le 3$.
We refer to \cite{ARV}, \cite{Be}, \cite{CL}, \cite{HM}, \cite{HN}, \cite{Na-Zh} and \cite{PS} for details.

\begin{conjecture}
Let $X$ be a Fano manifold of Picard number one which is different from the
projective space. Then a surjective endomorphism $X\to X$ must be bijective.
\end{conjecture}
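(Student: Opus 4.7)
The plan is to reduce the conjecture to the toric characterization of Corollary~\ref{smooth-cor}. Since $\rho(X)=1$, let $H$ be an ample primitive generator of $\NS(X)$ modulo torsion. For any surjective $f:X\to X$, the pullback $f^*H$ is ample and proportional to $H$, so $f^*H\equiv qH$ for a positive integer $q$, and $\deg(f)=q^n$ with $n=\dim X$. Thus $f$ is bijective precisely when $q=1$, so I may assume for contradiction that $q\ge 2$; in particular $f$ is numerically polarized, hence $q$-polarized by Theorem~\ref{thm-num-lin}. Combined with $-K_X=rH$ for the Fano index $r$, the Riemann-Hurwitz formula gives
\[
R_f \;\equiv\; (q-1)\,r\,H,
\]
which is ample, so $f$ ramifies along a nontrivial effective branch divisor $B_f\subset X$.

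The goal is to exhibit an $f^{-1}$-invariant reduced divisor $D\supseteq B_f$ such that $f|_{X\setminus D}$ is quasi-\'etale. Once such $D$ is in hand, Corollary~\ref{smooth-cor} applies---$X$ is Fano, hence rationally connected and smooth---and forces $(X,D)$ to be a toric pair; but a toric variety of Picard number one is $\mathbb{P}^n$, contradicting $X\not\simeq \mathbb{P}^n$.

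The natural candidate is the forward orbit of the branch locus
\[
D \;:=\; \bigcup_{i\ge 0} f^{i}(B_f),
\]
regarded as a reduced union of prime divisors. Quasi-\'etaleness on $X\setminus D$ is automatic once $D$ is a closed divisor and $f^{-1}$-invariant, because $R_f\subseteq f^{-1}(B_f)\subseteq f^{-1}(D)$. The projection formula combined with $f^*H=qH$ gives
\[
q^{n-1}\bigl(H^{n-1}\!\cdot E\bigr) \;=\; \deg(f|_E)\cdot \bigl(H^{n-1}\!\cdot f(E)\bigr)
\]
for every prime component $E$ of $B_f$, providing a priori degree control on forward iterates $f^i(E)$. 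Via Chow-variety boundedness at bounded degree, and Proposition~\ref{prop-finiteclosed} applied to the monoid generated by $f$, one then hopes to conclude that only finitely many distinct prime divisors occur in the orbit, after which $D$ is a closed reduced divisor with $f(D)=D$ and hence $f^{-1}(D)=D$.

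The hard part, and precisely the reason the conjecture remains open for $\dim X\ge 4$, is this orbit-finiteness step: a priori the iterates $f^i(B_f)$ could proliferate distinct prime components indefinitely even though their degrees remain bounded. In dimensions $2$ and $3$ the difficulty is handled via the explicit classification of Fano varieties of Picard number one (see \cite{ARV}, \cite{HN}, \cite{HM}); in higher dimension one expects to need either sharper numerical constraints on $B_f$ forced by polarizedness combined with Theorem~\ref{main-thm-rc}, or an entirely different route via tangent-bundle positivity and the Cho-Miyaoka-Shepherd-Barron characterization of projective space.
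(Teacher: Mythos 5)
The first thing to note is that the paper offers no proof of this statement: it is explicitly presented as a well-known \emph{open} conjecture, solved only in dimension $\le 3$, so there is no argument of the authors to compare yours against. Your proposal is a strategy, not a proof, and you correctly flag where it breaks down. The reduction itself is sound and is essentially the Hwang--Nakayama framework already cited in the paper: if $q\ge 2$ then $f$ is polarized, $R_f\equiv (q-1)rH$ is a nonzero effective divisor, and \emph{if} one can produce a reduced $f^{-1}$-invariant divisor $D$ with $\Supp R_f\subseteq f^{-1}(D)$, then Corollary \ref{smooth-cor} makes $(X,D)$ a toric pair, and a smooth projective toric variety of Picard number one is indeed $\mathbb{P}^n$. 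The entire content of the conjecture is thus concentrated in the existence of such a totally invariant divisor containing the branch locus, and that is exactly the step you leave open.

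Moreover, the two tools you propose for closing that step do not do what you need. Proposition \ref{prop-finiteclosed} counts $f^{-1}$-\emph{periodic} closed subsets; it says nothing about the forward images $f^i(B_f)$ unless you already know each of them satisfies $f^{-s}(\cdot)=(\cdot)$ for some $s>0$, which is precisely what you are trying to prove --- the appeal is circular. And the projection formula gives $H^{n-1}\cdot f(E)=q^{n-1}\,(H^{n-1}\cdot E)/\deg(f|_E)$ with only the trivial bound $\deg(f|_E)\ge 1$, so the degrees of the iterates $f^i(E)$ are bounded above only by $q^{i(n-1)}(H^{n-1}\cdot E)$ and can in principle grow without bound; the claimed ``a priori degree control'' feeding into Chow-variety boundedness therefore does not get off the ground. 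So the gap is not merely that one step is hard: the two mechanisms offered to close it are, as stated, inapplicable, and the argument remains a conditional reduction rather than a proof.
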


Generalizing the above conjecture to arbitrary Picard number, we may ask:

\begin{question}\label{que-toric}
Let $X$ be a rationally connected smooth projective variety of dimension $n \ge 1$
which admits a polarized endomorphism.
Is $X$ (close to) a toric variety?
\end{question}

\section{Cases in positive characteristic}\label{sec-7}

Throughout this section, we always work over the field $k$ of characteristic $p>0$.

\begin{definition} Let $f:X\to X$ be a surjective endomorphism of a projective variety $X$.
We say that  $f$ is {\it separable} if the induced field extension $f^*:k(X)\to k(X)$ is separable.
Denote by $f^{\Gal}: Y\to X$ the Galois closure of $f$.
\end{definition}

Let $f:X\to X$ be a separable surjective endomorphism of a normal projective variety $X$. We have seen that $f$ being ``numerically quasi-polarized'' implies $f$ being ``numerically polarized''; see Proposition \ref{prop-fx-x} which is applied to $\N^1(X)$ and hence is characteristic free.
Furthermore, we have the following result generalizing Theorem \ref{thm-num-lin}.

\begin{theorem}\label{thm-num-lin-p}(cf.~\cite[Theorem 5.1]{CMZ})
Let $f:X\to X$ be a numerically quasi-polarized separable endomorphism of a normal projective variety $X$. Then $f$ is polarized.
\end{theorem}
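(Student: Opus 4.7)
The plan is to mirror the two-step strategy used in the characteristic-zero proof of Theorem \ref{thm-num-lin}, with the separability hypothesis taking over the role played by \cite[Lemma 2.3]{Na-Zh}. First, I would promote $L$ to an ample Cartier divisor satisfying the same numerical relation; second, I would upgrade the numerical equivalence to linear equivalence via a perturbation in $\Pic^0(X)$.

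For Step 1, I apply Proposition \ref{prop-fx-x} to $V := \N^1(X)$, $\varphi := f^{\ast}|_{\N^1(X)}$ and the $\varphi$-invariant cone $C := \PE^1(X)$. Since $L$ is big, $[L] \in C^{\circ}$ and $\varphi([L]) = q[L]$, so condition (i) of the proposition holds. This yields the $\varphi$-norm bound (ii) on $\N^1(X)$, a property of $\varphi$ alone. I then reapply the proposition to the invariant cone $\Nef(X)$: condition (ii) carries over, hence (i) also holds for $\Nef(X)$, providing an ample $\mathbb{R}$-class with eigenvalue $q$. Since $q\in\mathbb{Z}$ and $\varphi$ is defined over $\mathbb{Q}$, the $q$-eigenspace is $\mathbb{Q}$-rational, and after clearing denominators we obtain an ample Cartier divisor $L_0$ with $f^{\ast}L_0 \equiv qL_0$. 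This step is purely linear-algebraic and hence characteristic-free.

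For Step 2, set $\alpha := f^{\ast}L_0 - qL_0$. After replacing $L_0$ by a positive multiple to absorb the finite torsion subgroup of $\NS(X)$, I may assume $\alpha \in \Pic^0(X)$. The goal is to find $M \in \Pic^0(X)$ satisfying
$$(f^{\ast} - q\cdot\id)(M) = -\alpha ,$$
since then $L := L_0 + M$ remains ample and satisfies $f^{\ast}L \sim qL$ as required. Hence it suffices to show that $f^{\ast} - q\cdot\id$ is surjective on the $k$-points of $\Pic^0(X)$. Passing to the reduced part $A := \Pic^0(X)_{\red}$, which is an abelian variety, separability of $f$ guarantees that $f^{\ast}|_A$ is an isogeny, since $f_{\ast}\circ f^{\ast} = \deg(f)\cdot\id$ has finite kernel on $A$.

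The principal obstacle is to verify that $f^{\ast}|_A - q\cdot\id_A$ is itself an isogeny, equivalently that $q$ is not an eigenvalue of $f^{\ast}|_A$ acting on the Tate module $V_{\ell}(A)$ for some $\ell\neq p$. Morally, this reflects a weight-type inequality: $f^{\ast}$-eigenvalues on the ``$H^1$-object'' $V_{\ell}(A)$ should be strictly smaller in modulus than $q$, while modulus-$q$ eigenvalues arise from the ``$H^2$-object'' $\N^1(X)$ through the ample class. Formalizing this in the separable positive-characteristic setting is the technical heart of the argument, and it is exactly here that separability is indispensable, to ensure that $f^{\ast}$ acts as a genuine endomorphism of the abelian variety $A$ so that Tate-module and isogeny considerations apply. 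Once surjectivity of $f^{\ast} - q\cdot\id$ on $\Pic^0(X)$ is in hand, taking $L := L_0 + M$ as above completes the proof.
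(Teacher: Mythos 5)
Your Step 1 --- applying Proposition \ref{prop-fx-x} first to the cone $\PE^1(X)$ to obtain the norm bound (ii) and then to $\Nef(X)$ to produce an ample Cartier divisor $L_0$ with $f^{\ast}L_0\equiv qL_0$ --- is correct and is exactly the paper's route from ``numerically quasi-polarized'' to ``numerically polarized'', which is indeed characteristic-free. Reducing the remaining step to the surjectivity of $f^{\ast}-q\,\id$ on $\Pic^0(X)(k)$ is also the right strategy: it is the positive-characteristic analogue of \cite[Lemma 2.3]{Na-Zh}, which is how the paper handles Theorem \ref{thm-num-lin} in characteristic $0$.

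The genuine gap is that you never prove the one statement that carries all the content of this step, namely that $q$ is not an eigenvalue of $f^{\ast}$ on $V_{\ell}(A)$ for $A:=\Pic^0(X)_{\red}$, equivalently that $q\,\id-f^{\ast}$ is an isogeny of $A$ and hence surjective on $k$-points. You explicitly defer this as ``the technical heart'', but it is precisely what \cite[Theorem 5.1]{CMZ} establishes and what distinguishes the positive-characteristic case: in characteristic $0$ the estimate follows from Hodge theory (the eigenvalues of $f^{\ast}$ on $H^1(X,\mathcal{O}_X)$ have modulus at most $\sqrt{q}<q$), whereas in characteristic $p$ one must work on the Tate module, e.g.\ by descending $f$ to an isogeny $\psi$ of the Albanese variety with $\psi^{\ast}\Theta\equiv q\Theta$ for a suitable ample $\Theta$ and using the positivity of the Rosati involution to force every eigenvalue of $\psi$ on $V_{\ell}$ to have modulus $\sqrt{q}$. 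Until that estimate is supplied, your argument proves nothing beyond ``numerically polarized''. A secondary inaccuracy: separability is not what makes $f^{\ast}|_{A}$ an isogeny --- the norm map gives $N_f\circ f^{\ast}=\deg(f)\,\id$, and multiplication by any nonzero integer is an isogeny of an abelian variety even when $p\mid\deg f$ --- so the role you assign to the separability hypothesis is misplaced; it has to be located inside the missing eigenvalue estimate itself, which is a further indication that the heart of the proof has not been carried out.
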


Propositions \ref{prop-3-equiv} and \ref{lem-int-des1} hold true in the case of positive characteristic since the argument are cone-theoretical.
Let $X$ be an $n$-dimensional normal projective variety.
Then $\N^1(X)$ can be naturally embedded into $\N_{n-1}(X)$ due to the following lemma.
So Proposition \ref{prop-4-equiv} also holds true in the case of positive characteristic.
\begin{lemma}
Let $X$ be a projective variety of dimension $n \ge 2$. Let $M$ a Cartier divisor. Suppose that $H_1\cdots H_{n-1}\cdot M = 0 = H_1\cdots H_{n-2}\cdot M^2$ for some ample Cartier divisors $H_1, \cdots, H_{n-1}$. Then $M\equiv 0$ (numerical equivalence).
\end{lemma}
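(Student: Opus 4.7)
The plan is to induct on $n=\dim X$, with the base case $n=2$ handled by the classical Hodge index theorem on a surface and the inductive step reducing dimension by cutting $X$ with a general hyperplane section from the linear system $|mH_1|$.

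For the base case $n=2$, the hypotheses read $M\cdot H_1=0$ and $M^2=0$. I would pass to a resolution $\pi\colon\widetilde{X}\to X$ (which exists for projective surfaces in any characteristic by the work of Lipman). Then $\pi^*M$ is Cartier on the smooth projective surface $\widetilde{X}$ with $(\pi^*M)^2=0$ and $\pi^*M\cdot\pi^*H_1=0$. Since $\pi$ is birational, $\pi^*H_1$ is big and nef with $(\pi^*H_1)^2=H_1^2>0$, so it lies in the positive cone of the intersection form on $\N^1(\widetilde{X})_{\R}$, which has signature $(1,\rho-1)$ by the Hodge index theorem (valid in any characteristic for smooth projective surfaces). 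Therefore the orthogonal complement of $\pi^*H_1$ is negative definite, and as $\pi^*M$ lies in it with self-intersection zero, we conclude $\pi^*M\equiv 0$ on $\widetilde{X}$. The projection formula then yields $M\equiv 0$ on $X$.

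For the inductive step $n\ge 3$, I would first establish an intermediate claim: for $m$ large enough that $mH_1$ is very ample, $M|_D\equiv 0$ on $D$ for every irreducible reduced divisor $D\in|mH_1|$. By multilinearity of intersection numbers, the hypotheses restrict to
\[
(M|_D)\cdot(H_2|_D)\cdots(H_{n-1}|_D)=m\cdot M\cdot H_1\cdots H_{n-1}=0
\]
and
\[
(M|_D)^2\cdot(H_2|_D)\cdots(H_{n-2}|_D)=m\cdot M^2\cdot H_1\cdots H_{n-2}=0,
\]
with $H_2|_D,\ldots,H_{n-1}|_D$ ample on the $(n-1)$-dimensional projective variety $D$, so the induction hypothesis applied to $D$ gives the claim. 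To conclude $M\equiv 0$ on $X$, it suffices to verify $M\cdot C=0$ for every irreducible curve $C\subset X$. For $m$ sufficiently large (by Serre vanishing) the linear subsystem corresponding to $H^0(X,\mathcal{I}_C\otimes\mathcal{O}_X(mH_1))$ is non-empty, and its base locus is contained in $C$, which has codimension $n-1\ge 2$ in $X$. A Bertini-type theorem then produces an irreducible reduced divisor $D$ in this subsystem, so $C\subset D$ and the intermediate claim gives $M\cdot C=(M|_D)\cdot C=0$.

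The main obstacle I anticipate is the Bertini-with-base-locus step in positive characteristic: while the classical Bertini theorem for smoothness can fail over a field of characteristic $p>0$, the irreducibility version (in the spirit of Jouanolou) for linear subsystems whose base locus has codimension $\ge 2$ does remain valid, and this is exactly what the argument requires. A secondary technical point is the reducedness of $D$, needed so that the inductive hypothesis applies to $D$ as a projective variety; this can be arranged by enlarging $m$, or otherwise handled by passing to $D_{\red}$ with careful bookkeeping of intersection multiplicities.
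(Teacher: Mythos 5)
Your proposal is correct and follows essentially the same route as the paper's proof: induction on $n$ with the $n=2$ case settled by the Hodge index theorem, and the inductive step performed by restricting to an integral member of $|mH_1|$ through a given curve $C$ (the paper cites \cite[Lemma 2.4]{Hu} for the existence of such a hypersurface, which is exactly the Bertini/Jouanolou-type step you flag as the main technical point). The only cosmetic difference is that the paper reduces to the normal case by taking the normalization at the outset, while you handle the surface base case by passing to a resolution.
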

\begin{proof}
When $n=2$, this lemma follows from the Hodge index theorem.
We then prove by induction on $n$.
Suppose $n>2$.
By taking normalization, we may assume $X$ is normal.
Let $C$ be an irreducible curve on $X$.
Since $n>2$, there exists an integral hypersurface $H\in |mH_1|$ containing $C$ for some $m>0$ (cf.~\cite[Lemma 2.4]{Hu}).
Note that $H_2|_H\cdots H_{n-1}|_H\cdot M|_H = 0 = H_2|_H\cdots H_{n-2}|_H\cdot (M|_H)^2$.
By induction $M\cdot C=M|_H\cdot C=0$.
So the lemma is proved.
\end{proof}

With some additional assumptions and extra work, we can deal with the singularities for the surface case,
extending \cite{Wa} to positive characterisitics.
We refer to \cite[Lemma 4.4]{Ok} for the generalized ramification divisor formula. 
In the case of positive characteristic, one requires an additional ``tame'' assumption on $f$ to apply \cite[Proposition 5.20]{KM}; see \cite[Proposition 4.6, Remark 4.7, Example 4.8]{Ok}.

\begin{theorem}\label{thm-w}(cf.~\cite[Theorem 10.2]{CMZ}) Let $f:X\to X$ be a non-isomorphic surjective endomorphism of a normal algebraic surface $X$ such that the degree $\deg f^{\Gal}$ of the Galois closure of $f$ is co-prime to $p$.
Then $X$ is lc.
\end{theorem}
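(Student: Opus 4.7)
The plan is to adapt Wahl's characteristic-zero argument \cite{Wa} using the coprimality hypothesis on $\deg f^{\Gal}$ to activate tame versions of the two key ingredients needed: the ramification divisor formula and Koll\'ar--Mori's preservation of singularities under finite covers. Since $\deg f^{\Gal}$ is coprime to $p$, both $f$ and its Galois closure $\pi := f^{\Gal} : Y \to X$ are tame. The generalized ramification formula \cite[Lemma 4.4]{Ok} then furnishes an effective Weil $\Q$-divisor $R_f$ on $X$ with
$$K_X \sim_{\Q} f^* K_X + R_f,$$
so in particular $(f^* - \id)K_X = R_f$ is effective. The same formula applies to $\pi$ and yields an effective ramification divisor $R_\pi$ on $Y$.

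Next I would invoke the tame version of \cite[Proposition 5.20]{KM} established in \cite[Proposition 4.6]{Ok} (the examples in \cite[Remark 4.7, Example 4.8]{Ok} confirm that tameness cannot be dropped). This gives the classical discrepancy transformation $a(E_Y,Y) = r\bigl(a(E,X)+1\bigr) - 1$ for the cover $\pi$ (and analogously for $f$), where $r$ is the ramification index along $E$. Consequently the lc property is preserved and reflected under the tame covers at hand. Suppose now, for a contradiction, that $X$ is not lc at some point $p$; then there is an exceptional prime divisor $E$ over $p$ with $a(E, X) < -1$. Iterating pullback by $f$ produces exceptional divisors $E_n$ over $p$ whose discrepancies satisfy
$$a(E_n, X) \;=\; r_n\bigl(a(E,X) + 1\bigr) - 1 \;\longrightarrow\; -\infty$$
as $n \to \infty$, since $r_n \geq 1$ and ramification accumulates along suitable branches because $f$ is non-isomorphic. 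Combining (i) the effectivity of $R_f$ coming from the tame ramification formula, (ii) this unbounded negative growth of discrepancies over the fixed singular point $p$, and (iii) Wahl's combinatorial analysis of the dual resolution graph at a non-lc surface singularity, one derives the required contradiction with the rigidity of resolution data at $p$.

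The main obstacle is checking that the numerical/combinatorial core of Wahl's original surface-singularity analysis survives in positive characteristic. His argument exploits specific facts about non-lc singularities (cycles of rational curves, Hirzebruch--Jung strings, self-intersection constraints) and the way the ramification divisor of a finite endomorphism intersects the exceptional cycles of the minimal resolution. Tameness of $f^{\Gal}$ is precisely the minimal hypothesis needed so that both \cite[Lemma 4.4]{Ok} and the tame \cite[Proposition 5.20]{KM} hold in the form required; once these are in place, the structural features of the dual graph and the finiteness of the data on a fixed log resolution are characteristic-free, and Wahl's argument can be rerun essentially verbatim in this setting.
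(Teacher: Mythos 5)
Your choice of ingredients is exactly the route the paper indicates (it only sketches the proof, deferring to \cite[Theorem 10.2]{CMZ}): rerun Wahl's characteristic-zero argument \cite{Wa}, with the ramification divisor formula replaced by its tame version \cite[Lemma 4.4]{Ok} and with the coprimality of $\deg f^{\Gal}$ and $p$ used to make the discrepancy formula of \cite[Proposition 5.20]{KM} available. So the strategy is the right one; the problem is that the step where you actually derive the contradiction does not work as written.

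Concretely: (a) the claim that iteration produces exceptional divisors $E_n$ over a fixed point with $a(E_n,X)=r_n\bigl(a(E,X)+1\bigr)-1\to-\infty$ ``since ramification accumulates because $f$ is non-isomorphic'' is unjustified --- a non-isomorphic endomorphism can be quasi-\'etale, and even when it is not, there is no a priori reason the relevant ramification indices $r_n$ tend to infinity; establishing such growth is precisely what has to be proved. (b) Even granting $a(E_n,X)\to-\infty$, this is not a contradiction: a non-lc point \emph{by definition} admits exceptional divisors of arbitrarily negative discrepancy, so unbounded negativity is equivalent to the hypothesis you are trying to refute, not in tension with it. The actual contradiction in Wahl's argument comes from a \emph{bounded} invariant of the fixed germ --- his characteristic number, a self-intersection number computed on the (minimal) resolution via Mumford's $\Q$-valued pullback, which is strictly positive exactly at non-lc points --- being forced by a degree-$d$ self-cover with $d>1$ to satisfy an inequality that kills it; this is the ``rigidity of resolution data'' you invoke in item (iii) but never supply, and it is the entire content of the proof. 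Two further omissions: you must first arrange, after replacing $f$ by an iterate, that some non-lc point is genuinely periodic and totally invariant so that $f$ restricts to a self-cover of the germ; and the Galois closure, whose degree is the object of the hypothesis, is introduced but then plays no role in your argument --- its purpose is to make the local covers Galois with inertia of order prime to $p$, which is what allows one to write the canonical divisor upstairs as the pullback of a log pair with the standard branch divisor and to apply \cite[Proposition 5.20]{KM} in the ``lc upstairs iff lc downstairs'' form. Your final paragraph is right that Mumford's intersection theory and the dual-graph combinatorics are characteristic-free, but without the pieces above the argument does not close.
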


The proof of Theorem \ref{main-thm-k} is based on Propositions \ref{prop-3-equiv} and \ref{prop-4-equiv} and the ramification divisor formula. So we can easily get the following.

\begin{theorem}\label{main-thm-k-p}(cf.~\cite[Theorem 1.1]{CMZ}, \cite[Theorem 1.5]{Meng}) Let $X$ be a normal projective variety admitting a polarized (or int-amplified) separable endomorphism.
Then we have:
\begin{itemize}
\item[(1)]
$-K_X$ is weakly numerically equivalent to some effective Weil $\Q$-divisor.
\item[(2)]
If $X$ is further assumed to be $\Q$-Gorenstein, then $-K_X$ is numerically equivalent to some effective $\Q$-Cartier divisor.
\end{itemize}
\end{theorem}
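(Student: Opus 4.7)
The plan is to transport the characteristic zero argument for Theorem \ref{main-thm-k} wholesale, leveraging that the cone-theoretic Propositions \ref{prop-3-equiv} and \ref{prop-4-equiv} are characteristic-free (they concern the action of $f^*$ on finite-dimensional real spaces) while the ramification divisor formula requires exactly the separability hypothesis. Since $f$ is separable, we have
$$K_X = f^*K_X + R_f$$
with $R_f$ an effective Weil divisor (see \cite[Lemma 4.4]{Ok}); setting $\varphi := f^*$, this rearranges to $(\varphi - \id)(-K_X) = R_f \geq 0$.

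For (1), view $\varphi$ as acting on $\N_{n-1}(X)$, and let $C \subseteq \N_{n-1}(X)$ be the convex cone of classes represented by effective Weil $\R$-divisors. Since pullback preserves effectivity, $C$ is $\varphi$-invariant. Because $f$ is int-amplified, Proposition \ref{prop-4-equiv}(4) applies and yields $(\varphi - \id)^{-1}(C) \subseteq C$. Applying this inclusion to $R_f \in C$ gives
$$-K_X = (\varphi - \id)^{-1}(R_f) \in C,$$
so $-K_X$ is weakly numerically equivalent to an effective Weil $\R$-divisor. A standard Carath\'eodory-type argument, exploiting that both $\varphi$ and $R_f$ are defined over $\Q$, upgrades this to weak numerical equivalence with an effective Weil $\Q$-divisor. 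Part (2), when $X$ is $\Q$-Gorenstein, is the identical argument run instead in $\N^1(X)$ with the cone of effective $\Q$-Cartier $\R$-divisor classes, invoking Proposition \ref{prop-3-equiv}(4); here $-K_X$ is itself $\Q$-Cartier, so one obtains honest numerical equivalence with an effective $\Q$-Cartier divisor.

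The main obstacle is really just securing the ramification divisor formula with $R_f$ effective, which can fail for inseparable endomorphisms in positive characteristic; once separability supplies this input, the proof becomes a purely formal transcription of the characteristic zero case, since the cone-theoretic machinery of Section \ref{sec-2} is insensitive to the characteristic of the base field. A secondary technical point, but not a conceptual one, is the rationality upgrade from effective $\R$-divisor to effective $\Q$-divisor; this is handled by a finite-dimensional convexity argument inside $\N_{n-1}(X)$ (respectively $\N^1(X)$), using that the image of an integral class under $(\varphi-\id)^{-1}$ lies in the $\Q$-structure.
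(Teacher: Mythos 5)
Your proposal is correct and follows exactly the route the paper indicates: the separability hypothesis secures the ramification divisor formula $K_X = f^*K_X + R_f$ with $R_f$ effective (cf.\ \cite[Lemma 4.4]{Ok}), and then the characteristic-free cone criteria of Propositions \ref{prop-3-equiv}(4) and \ref{prop-4-equiv}(4) applied to $(\varphi-\id)(-K_X)=R_f$ give the conclusion, with the standard rational-polyhedron argument supplying the $\Q$-coefficient upgrade. This is precisely the paper's own (sketched) proof of Theorem \ref{main-thm-k-p}, transcribed from the characteristic-zero case of Theorem \ref{main-thm-k}.
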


By a pure algebraic proof, we further have:

\begin{theorem}\label{thm-torsion}(cf.~\cite[Theorem 1.4]{CMZ})
Let $f:X\to X$ be a numerically polarized separable endomorphism of a normal projective variety $X$ with $K_X$ being pseudo-effective and $\Q$-Cartier. Then $f$ is quasi-\'etale and $K_X\sim_{\Q}0$.
\end{theorem}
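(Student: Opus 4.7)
The plan is to mimic the strategy used for Theorem \ref{main-thm-k-p}: first extract $R_f=0$ and $K_X\equiv 0$ from the ramification divisor formula combined with the spectral information in Proposition \ref{prop-fx-x}, and then upgrade numerical triviality of $K_X$ to $\Q$-linear triviality by analysing the resulting $f^{\ast}$-fixed class in $\Pic(X)$.

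Since $f$ is numerically polarized and separable, Theorem \ref{thm-num-lin-p} yields an ample Cartier divisor $L$ and an integer $q>1$ with $f^{\ast}L\sim qL$. Separability gives the ramification divisor formula
$$K_X\sim_{\Q}f^{\ast}K_X+R_f,\qquad R_f\ge 0.$$
Intersecting with $L^{n-1}$, using $f^{\ast}L^{n-1}\equiv q^{n-1}L^{n-1}$ together with the projection formula and $\deg f=q^n$, one computes $f^{\ast}K_X\cdot L^{n-1}=q\,K_X\cdot L^{n-1}$, whence
$$R_f\cdot L^{n-1}=(1-q)\,K_X\cdot L^{n-1}.$$
The left side is non-negative because $R_f$ is effective and $L$ ample, while the right side is non-positive because $K_X\cdot L^{n-1}\ge 0$ (pseudo-effectivity of $K_X$, ampleness of $L$) and $1-q<0$. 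Both vanish. Effectivity of $R_f$ together with $L$ ample then forces $R_f=0$, so $f$ is quasi-\'etale.

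With $R_f=0$, the ramification formula reads $K_X\sim_{\Q}f^{\ast}K_X$, so the class $[K_X]\in\N^1(X)$ is fixed by $\varphi:=f^{\ast}|_{\N^1(X)}$. By Proposition \ref{prop-fx-x}(1), applied to the nef cone, $\varphi$ is diagonalizable with all eigenvalues of modulus $q>1$; in particular $1$ is not an eigenvalue, so $[K_X]=0$ and $K_X\equiv 0$.

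It remains to upgrade $K_X\equiv 0$ to $K_X\sim_{\Q}0$, which is where I expect the main obstacle to lie. Choose $m>0$ with $M:=mK_X$ Cartier; the relation $K_X\sim_{\Q}f^{\ast}K_X$ gives $f^{\ast}[M]=[M]$ in $\Pic(X)$, and $[M]\equiv 0$. Since the image of $[M]$ in $\NS(X)$ is torsion (finiteness of $\Pic^{\tau}(X)/\Pic^0(X)$), after replacing $M$ by a positive multiple I may assume $[M]\in\Pic^0(X)$, still $f^{\ast}$-fixed. The remaining task is to show that the $f^{\ast}$-fixed subgroup of $\Pic^0(X)$ is finite, equivalently that $f^{\ast}-\id$ is an isogeny of the reduced Picard variety $\Pic^0(X)_{\red}$. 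This reduces to an ``absence of eigenvalue $1$'' statement for $f^{\ast}$ acting on the $\ell$-adic $H^1$ of $X$ (or the Tate module of $\Pic^0(X)_{\red}$), which one obtains from a polarized-dynamics weight estimate: after passage to a smooth alteration of $X$ and an application of Deligne's purity theorem, the eigenvalues have archimedean modulus $\sqrt{q}\neq 1$. Granting this, $[M]$ is torsion in $\Pic(X)$, and hence $K_X\sim_{\Q}0$.
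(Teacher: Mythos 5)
Your first two steps are correct and are essentially the expected route (the survey only cites \cite[Theorem 1.4]{CMZ} and gives no proof, but the opening moves there are the same): the intersection computation $R_f\cdot L^{n-1}=(1-q)\,K_X\cdot L^{n-1}$ forces $R_f=0$, hence $f$ is quasi-\'etale, and then $f^{\ast}K_X\sim_{\Q}K_X$ combined with the absence of the eigenvalue $1$ for $f^{\ast}|_{\N^1(X)}$ (Proposition \ref{prop-fx-x} applied to $\Nef(X)$) gives $K_X\equiv 0$. Reducing the remaining claim to the finiteness of the $f^{\ast}$-fixed subgroup of $\Pic^0(X)$, via finiteness of $\Pic^{\tau}(X)/\Pic^0(X)$, is also the right reduction.

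The gap is in your proposed proof of that finiteness. First, a smooth alteration $X'\to X$ does not carry an induced endomorphism --- $f$ does not lift to $X'$ in general --- so there is no action of $f$ on $H^1_{\mathrm{et}}(X',\Q_\ell)$ to which any purity statement could be applied. Second, Deligne's purity theorem constrains eigenvalues of the \emph{Frobenius}; the assertion that a polarized endomorphism with $f^{\ast}L\sim qL$ acts on $H^1$ with eigenvalues of modulus $q^{1/2}$ is precisely the point that needs proof here, and purity does not supply it for an arbitrary endomorphism over an arbitrary algebraically closed field. The workable route --- and the reason the paper advertises the argument in \cite{CMZ} as ``pure algebraic'' --- is to stay on the abelian variety $B:=\Pic^0(X)_{\red}$ itself: $H^1_{\mathrm{et}}(X,\Q_\ell(1))$ is the rational Tate module of $B$, the homomorphism $f^{\ast}|_{B}$ is an isogeny (for instance because the norm map satisfies $N_f\circ f^{\ast}=[\deg f]$ on $\Pic^0(X)$), and one must then transfer the polarization relation $f^{\ast}L\sim qL$ on $X$ into a polarization-type identity for $f^{\ast}|_{B}$ (equivalently for the induced isogeny of the Albanese) so as to invoke Weil's Rosati-involution argument, which yields eigenvalues of modulus $q^{1/2}\neq 1$ on $T_\ell B$ and hence that $f^{\ast}-\id$ is an isogeny of $B$. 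That transfer is a genuine step which your sketch does not address, so as written the proof of $K_X\sim_{\Q}0$ is incomplete.
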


Following the proof of \cite[Lemma 6.1]{MZ}, \cite[Lemma 6.2]{CMZ} and \cite[Lemma 8.1]{Meng}, we may generalize Lemma \ref{lem-fin-per} as follows.
We refer to \cite[Remark 6.3]{CMZ} to see that the following condition (2) is necessary.
\begin{lemma}(cf.~\cite[Lemmas 3.4 and 3.5]{MZ_PG})
Let $f:X\to X$  be an int-amplified separable endomorphism of a projective variety $X$.
Assume $A\subset X$ is a closed subvariety with 
$$f^{-i}f^i(A) = A$$ 
for all $i\ge 0$.
Assume further either one of the following conditions.
\begin{itemize}
\item[(1)] $A$ is a prime divisor of $X$.
\item[(2)] $p$ and $\deg f$ are co-prime.
\end{itemize}
Then $A$ is $f^{-1}$-periodic.
\end{lemma}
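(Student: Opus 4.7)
The plan is to adapt the characteristic-zero proof of Lemma \ref{lem-fin-per} (as carried out in \cite[Lemma 6.1]{MZ} and \cite[Lemma 8.1]{Meng}) to positive characteristic; the only substantive new issue is the bookkeeping of multiplicities that arise from inseparability, which is precisely why hypothesis (1) or (2) is essential.

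Set $A_i := f^i(A)$. These are irreducible closed subvarieties of $X$ of common dimension $r := \dim A$, and the hypothesis $f^{-i}f^i(A)=A$ for all $i\ge 0$ translates into the set-theoretic identity $f^{-1}(A_i) = A_{i-1}$ for all $i \ge 1$, making each restriction $f|_{A_{i-1}}: A_{i-1}\to A_i$ finite and surjective. Observe first that the conclusion ``$A$ is $f^{-1}$-periodic'' is implied by finiteness of the forward orbit $\{A_i\}_{i\ge 0}$: a coincidence $A_s = A_t$ with $s>t\ge 0$ gives, upon applying $f^{-t}$ and invoking the hypothesis, $A = f^{-t}(A_t) = f^{-t}(A_s) = A_{s-t}$, i.e.\ $f^{-(s-t)}(A) = A$.

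To prove finiteness of the forward orbit, I pass to cycle classes in the finite-dimensional space $\N_r(X)$. The set-theoretic identity $f^{-1}(A_i) = A_{i-1}$ together with the irreducibility of $A_{i-1}$ yields a relation
$$ f^*[A_i] \;=\; m_i\,[A_{i-1}] $$
in $\N_r(X)$, where $m_i\in\Z_{>0}$ is the scheme-theoretic multiplicity of $A_{i-1}$ in the preimage $f^{-1}(A_i)$. Iterating gives $(f^*)^i[A_i] = m_1m_2\cdots m_i\cdot [A_0]$. The int-amplified hypothesis, through the cone-theoretic criteria of Propositions \ref{prop-3-equiv}--\ref{prop-4-equiv} (valid in positive characteristic, as recorded earlier in this section), makes $f^*$ expanding on the relevant numerical groups. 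Combined with a uniform upper bound on the $m_i$, the same intersection-theoretic and Chow-variety arguments as in the characteristic-zero case---bounding degrees $H^r\cdot A_i$ against a fixed ample $H$ via the projection formula, and exploiting integrality/discreteness of the classes $[A_i]$ in $\N_r(X)$---then force the orbit $\{A_i\}_{i\ge 0}$ to be finite.

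The main obstacle, and the unique step requiring positive-characteristic input, is therefore the uniform bound $m_i \le \deg f$. In characteristic zero this is automatic, but in positive characteristic an inseparable contribution along $A_{i-1}$ could in principle drive $m_i$ upward. Hypothesis (1) resolves this directly: when $A$ is a prime divisor, $[A_i]$ lies in $\N^1(X)$ and the identity $f^*[A_i] = m_i[A_{i-1}]$ reads at the level of Cartier divisor classes in $\Pic(X)$, which forces $m_i \le \deg f$. Hypothesis (2)---that $p$ is coprime to $\deg f$ together with the separability of $f$---precludes any wild inseparable ramification contribution along $A_{i-1}$ via the generalized ramification divisor formula of \cite[Lemma 4.4]{Ok}, again yielding $m_i \le \deg f$. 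That one of these extra hypotheses is genuinely needed in positive characteristic is witnessed by the example in \cite[Remark 6.3]{CMZ}.
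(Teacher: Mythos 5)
Your overall skeleton --- pass to the forward orbit $A_i=f^i(A)$, observe $f^{-1}(A_i)=A_{i-1}$, reduce periodicity to a coincidence $A_s=A_t$, and work with the relations $f^*[A_i]=m_i[A_{i-1}]$ and $m_i\cdot\deg(f|_{A_{i-1}})=\deg f$ in the numerical groups --- is the right one and matches the strategy of \cite[Lemma 6.1]{MZ}, \cite[Lemma 8.1]{Meng} and \cite[Lemmas 3.4, 3.5]{MZ_PG} that the paper points to. But the step you single out as the crux is not the crux, and the finiteness step is not justified. The bound $m_i\le\deg f$ is automatic in \emph{every} characteristic: push-pull gives $m_i\cdot\deg(f|_{A_{i-1}})=\deg f$ with both factors positive integers, so nothing about separability, primality of $A$, or $p\nmid\deg f$ is needed for it --- and, conversely, it cannot suffice, since otherwise the lemma would hold with no extra hypothesis, contradicting the necessity of (1)/(2) recorded in \cite[Remark 6.3]{CMZ}. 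What the argument actually requires is the much stronger statement that $m_i=1$ for all but finitely many $i$ once the $A_i$ are assumed pairwise distinct. Under (1) this comes from the ramification divisor of the \emph{separable} $f$: the coefficient of $A_{i-1}$ in $R_f$ is at least $m_i-1$, and $R_f$ has only finitely many components; this is genuinely a statement about prime divisors, and your remark that ``$[A_i]$ lies in $\N^1(X)$'' is also off, since a prime divisor on a non-$\Q$-factorial $X$ only gives a class in $\N_{n-1}(X)$. Under (2) one instead controls the locus where the reduced fibre of $f$ has fewer than $\deg f$ points; the ramification divisor formula of \cite[Lemma 4.4]{Ok} is a divisorial statement and does not by itself handle a higher-codimensional $A$.

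Second, ``bounded degree plus discreteness of the classes $[A_i]$ in $\N_r(X)$'' only yields finitely many \emph{numerical classes}, not finitely many subvarieties: distinct $A_i$ may share a class, and a bounded family is not a finite set. The actual contradiction in the cited proofs runs the inequality in the opposite direction: once $m_i=1$ for $i\gg 0$, one gets $\deg(f^i|_A)\ge c\,(\deg f)^i$, while the int-amplified hypothesis (all eigenvalues of $f^*|_{\N_{n-1}(X)}$ of modulus $>1$, hence all eigenvalues of $f_*=(\deg f)(f^*)^{-1}$ of modulus $<\deg f$) forces $\|(f^i)_*[A]\|=o((\deg f)^i)$; since $(f^i)_*[A]=\deg(f^i|_A)\,[A_i]$, this drives $\|[A_i]\|\to 0$, contradicting the uniform lower bound $[A_i]\cdot H^{r}\ge 1$ on classes of subvarieties for $H$ very ample. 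You should restructure the proof as a contradiction argument of this shape rather than an attempted enumeration of the orbit.
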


The assumption ``$p$ and $\deg f$ are co-prime'' for the following proposition is also necessary; see \cite[Remark 6.3]{CMZ}.
\begin{proposition}\label{prop-finiteclosed-p}(cf.~\cite[Proposition 3.6]{MZ_PG})
Let $f:X\to X$  be an int-amplified endomorphism of a projective variety $X$.
Suppose $p$ and $\deg f$ are co-prime.
Then there are only finitely many $f^{-1}$-periodic Zariski closed subsets.
\end{proposition}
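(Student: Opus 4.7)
The plan is to adapt the proof of Proposition \ref{prop-finiteclosed} from \cite{MZ_PG} to positive characteristic, with the only substantive modification being to invoke the positive-characteristic lemma just stated above in place of Lemma \ref{lem-fin-per}. Condition (2) of that generalized lemma requires exactly the hypothesis $\gcd(p,\deg f)=1$, so this is where the coprimality assumption enters.

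I would begin by reducing to the case of irreducible subvarieties: any $f^{-1}$-periodic closed subset decomposes into finitely many irreducible components which $f^{-1}$ permutes, so after replacing $f$ with a suitable iterate each component is itself $f^{-1}$-periodic. Next, for an irreducible $A$ of dimension $d$ satisfying $f^{-s}(A)=A$, the set-theoretic identity $(f^s)^{-1}(A)=A$ yields a cycle-theoretic relation
$$(f^s)^{\ast}[A]=m_A\,[A],\qquad m_A=\deg(f^s)/\deg(f^s|_A),$$
which exhibits $[A]$ as an eigenvector of $(f^s)^{\ast}$ acting on the numerical classes of $d$-cycles. Combined with the int-amplified hypothesis, whose consequences for the spectrum of $f^{\ast}$ on $\N^1(X)$ and $\N_{n-1}(X)$ are recorded in Propositions \ref{prop-3-equiv} and \ref{prop-4-equiv}, this forces a uniform upper bound on $H^d\cdot A$ for a fixed ample $H$; hence all irreducible $f^{-1}$-periodic subvarieties have bounded degree and therefore lie in finitely many irreducible components of $\Chow(X)$.

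The final step is to show that within each such Chow component only finitely many points parametrize $f^{-1}$-periodic subvarieties. Here the generalized lemma is decisive: given any $B$ in such a component, Noetherian descent applied to the ascending chain
$$B\subseteq f^{-1}f(B)\subseteq f^{-2}f^2(B)\subseteq \cdots$$
produces a stabilized closed subvariety $A$ with $f^{-i}f^i(A)=A$ for all $i\ge 0$, and the lemma (using $\gcd(p,\deg f)=1$) then promotes $A$ to be genuinely $f^{-1}$-periodic. Feeding this back into the spectral picture on the Chow component yields the required finiteness. I expect the main obstacle to be transferring the eigenvalue information from divisors and Weil divisors to cycles of intermediate dimension: in characteristic zero the Dinh--Sibony theory of currents \cite{DS03,Dinh09} bypasses this algebraically delicate step, and as \cite[Remark 6.3]{CMZ} indicates, the coprimality $\gcd(p,\deg f)=1$ is genuinely necessary rather than an artifact of the method.
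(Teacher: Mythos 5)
The paper does not reproduce a proof of this proposition: it is quoted from \cite[Proposition 3.6]{MZ_PG}, and the discussion preceding Proposition \ref{prop-finiteclosed} makes clear that the underlying argument is of Dinh--Sibony type (\cite{DS03}, \cite{Dinh09}), i.e.\ a multiplicity-counting argument. Measured against that, your proposal has a genuine gap at its central step. The relation $(f^s)^{\ast}[A]=m_A[A]$ with $m_A=\deg(f^s)/\deg(f^s|_A)$ carries no information about $H^{d}\cdot A$: for a $q$-polarized $f$ the projection formula $(f^s)^{\ast}[A]\cdot (f^s)^{\ast}(H^{d})=\deg(f^s)\,([A]\cdot H^{d})$ gives $m_A=q^{s(n-d)}$ and $\deg(f^s|_A)=q^{sd}$ for \emph{every} periodic $A$ of dimension $d$, independently of $[A]\cdot H^{d}$. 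So the eigenvector relation together with Propositions \ref{prop-3-equiv} and \ref{prop-4-equiv} (which only control $\N^1(X)$ and $\N_{n-1}(X)$) does not bound the degrees of periodic subvarieties, and the reduction to finitely many components of $\Chow(X)$ collapses; your own closing sentence concedes exactly this point. Even granting bounded degree, ``feeding this back into the spectral picture on the Chow component'' is not an argument for why one component contains only finitely many periodic members. The actual proof never passes through $\Chow(X)$: what int-amplifiedness buys (via the estimate $\lim_i (f^i)_{\ast}[Z]/(\deg f)^i=0$ for $\dim Z<\dim X$ from \cite{Meng}) is that $m_A\ge 2$ for every proper periodic $A$, and finiteness then follows from a counting and Noetherian induction on the locus where the local degree of $f$ is at least $2$.

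Second, the coprimality hypothesis is not consumed by condition (2) of the promotion lemma; that lemma and the present proposition are parallel inputs to Theorem \ref{main-thm-finite-R-p}, not steps of one another. The example behind \cite[Remark 6.3]{CMZ} shows why the hypothesis must act inside the counting itself: the Frobenius $F$ of $\mathbb{P}^n$ over $\overline{\mathbb{F}_p}$ is $p$-polarized, and every closed subvariety $Z$ defined over $\mathbb{F}_{p^s}$ satisfies $F^{-s}(Z)=Z$ on the nose, since $g(x_0^{p^s},\dots,x_n^{p^s})=g(x_0,\dots,x_n)^{p^s}$ for $g$ with coefficients in $\mathbb{F}_{p^s}$. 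These are genuinely $F^{-1}$-periodic with no promotion step involved, and there are infinitely many of them even though each has multiplicity $m_Z\ge 2$. Thus $\gcd(p,\deg f)=1$ is what guarantees tameness --- local multiplicities along a fibre sum to $\deg f$ and the locus of multiplicity at least $2$ is a proper closed subset --- which is exactly what Frobenius violates. A proof in which the coprimality enters only through the promotion lemma could not detect this failure, which is the surest sign that the key mechanism is missing.
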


With the above two results, Theorem \ref{main-thm-finite-R} can also be generalized as follows.

\begin{theorem}\label{main-thm-finite-R-p} (cf.~\cite[Theorem 1.1]{MZ_PG})
Let $X$ be a (not necessarily normal or $\Q$-Gorenstein) projective variety with a polarized (or int-amplified) endomorphism. 
Suppose $p$ and $\deg f$ are co-prime.
Then:
\begin{itemize}
\item[(1)]
$X$ has only finitely many (not necessarily $K_X$-negative) contractible extremal rays in the sense of Definition \ref{def:extrem_ray}.
\item[(2)]
Suppose $X$ is $\Q$-factorial normal. Then any finite sequence of MMP starting from $X$ is $G$-equivariant for some finite-index submonoid $G$ of $\SEnd(X)$.
\end{itemize}
\end{theorem}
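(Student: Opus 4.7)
The plan is to follow the proof of Theorem \ref{main-thm-finite-R} almost verbatim, substituting the two positive-characteristic inputs now available under the co-primality hypothesis $\gcd(p,\deg f) = 1$: the generalized $f^{-1}$-periodicity lemma stated just before Proposition \ref{prop-finiteclosed-p}, and Proposition \ref{prop-finiteclosed-p} itself. This is the only place where the characteristic hypothesis enters in an essential way.

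For part (1), the argument proceeds in three steps. First, I would show, exactly as in the characteristic-zero proof, that each contractible extremal ray $R$ of $\NE(X)$ satisfies $f^{-i} f^i(\Sigma_R) = \Sigma_R$ for all $i \ge 0$. This is a cone-theoretic and characteristic-free statement: since $f^*$ is an automorphism of $\N^1(X)$ preserving the numerical duality data of Definition \ref{def:extrem_ray}, the finite morphism $f^i$ sends the contraction of $R$ to the contraction of another contractible extremal ray $R_i$, with $f^i(\Sigma_R) = \Sigma_{R_i}$, and the required preimage identity follows from finiteness of $f^i$. The generalized $f^{-1}$-periodicity lemma (which uses $\gcd(p,\deg f)=1$) then yields $f^{-1}$-periodicity of $\Sigma_R$. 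Second, Proposition \ref{prop-finiteclosed-p} furnishes only finitely many $f^{-1}$-periodic Zariski closed subsets of $X$, bounding the possible $\Sigma_R$. Third, I would invoke the characteristic-free geometric step from \cite[Theorem 1.1]{MZ_PG} showing that only finitely many contractible extremal rays share a common exceptional locus, which completes the count.

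For part (2), once part (1) is established, let $\mathcal R$ denote the finite set of contractible extremal rays of $\NE(X)$. Every $g \in \SEnd(X)$ induces by pullback a permutation of $\mathcal R$ (by the same cone-theoretic observation, and with no co-primality needed on $g$ itself). This yields a monoid homomorphism $\SEnd(X) \to \Sym(\mathcal R)$ whose kernel $G$ is a finite-index submonoid. For $g \in G$, every contractible extremal ray is $g$-stable, so the single-step descent behind Theorems \ref{thm-equi-mmp} and \ref{thm-emmp}, which is characteristic-free once the relevant extremal ray is preserved, produces a compatible $g_{i+1}$ on each $X_{i+1}$. Iterating through the finite MMP sequence and shrinking $G$ further (still of finite index) to absorb the ``up to positive power'' replacements appearing at each step yields the claimed $G$-equivariance.

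The main obstacle is the third step of part (1): showing that only finitely many contractible extremal rays share a common exceptional locus. This is a convex-geometric argument on the face of $\NE(X)$ supported over the given locus, and the task is to verify that the proof in \cite[Theorem 1.1]{MZ_PG} uses no characteristic-dependent input (e.g.\ characteristic-zero resolution of singularities or Kodaira-type vanishing). A secondary concern is that the full MMP is not established in positive characteristic in higher dimensions, so the assertion in (2) should be read as applying to any finite sequence of MMP that actually exists on $X$.
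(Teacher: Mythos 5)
Your proposal matches the paper's approach: the paper derives this theorem precisely by substituting the generalized $f^{-1}$-periodicity lemma and Proposition \ref{prop-finiteclosed-p} (both requiring $\gcd(p,\deg f)=1$) into the three-step proof sketched for Theorem \ref{main-thm-finite-R}, exactly as you describe. Your caveats about the characteristic-free nature of the ``finitely many rays per exceptional locus'' step and the conditional reading of part (2) given the incomplete state of the MMP in positive characteristic are also consistent with the paper's own remarks.
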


In the case of positive characteristic, the theory of MMP is still far from being completed and is only known for lc 3-folds with characteristic $p>5$ (cf.~\cite{BW}, \cite{HX}, \cite{Wal} and the references therein).
For the lower dimensional cases, we refer to \cite[Theorem 1.6]{CMZ} and \cite[Theorem 1.8]{CMZ} for versions similar to Theorems \ref{main-thm-mmp}  and \ref{main-thm-rc-diag}, respectively.

\end{document}